\def\@settitle{\begin{center}
\large
\baselineskip14\p@\relax
\bfseries
\@title
\end{center}%
}
\providecommand*{\approxident}{\mathrel{\mathpalette\@approxident\sim}} 
\newcommand*{\@approxident}[2]{\sbox0{$#1\vcenter{}$}
\sbox2{$\m@th#1\equiv$}\dimen2=\dimexpr\ht2 - \ht0\relax
\sbox4{$\m@th#1\sim$}\dimen4=\dimexpr\ht4-\ht0\relax
\dimen0=\dimexpr-\ht4-\dp4+\dimen2\relax
\vcenter{\offinterlineskip \copy4 \kern\dimen0\copy4\kern\dimen0\copy4\ifdim\dp4=\z@
\kern\dimexpr -\ht0 + \dimen4\relax \fi}} 
\newcolumntype{Y}{>{\centering\arraybackslash}X}
\newcounter{NN}\numberwithin{NN}{section}
\renewcommand{\theNN}{{\rm \arabic{NN}${}^o$}}
\def\nr{\refstepcounter{NN}{\theNN}}
\renewcommand\labelenumi{\rm (\roman{enumi})}
\renewcommand\theenumi{\rm (\roman{enumi})}
\theoremstyle{plain}
\newtheorem{theorem}[subsection]{Theorem}
\newtheorem{stheorem}[equation]{Theorem}
\newtheorem{lemma}[subsection]{Lemma}
\newtheorem{sproposition}[equation]{Proposition}
\newtheorem{proposition}[subsection]{Proposition}
\newtheorem{scorollary}[equation]{Corollary}
\newtheorem{slemma}[equation]{Lemma}
\theoremstyle{definition}
\newtheorem{definition}[subsection]{Definition}
\newtheorem{remark}[subsection]{Remark}
\newtheorem{sremark}[equation]{Remark}
\newtheorem{ass}[subsection]{Assumption}
\newcommand{\NE}{\overline{\operatorname{NE}}}
\newcommand{\Eff}{{\operatorname{Eff}}}
\newcommand{\Nef}{{\operatorname{Nef}}}
\newcommand{\p}{\mathrm{p}_{\mathrm{a}}}
\newcommand{\dd}{\mathrm{d}}
\newcommand{\cc}{\mathrm{c}}
\newcommand{\red}{\mathrm{red}}
\newcommand{\Sing}{\operatorname{Sing}}
\newcommand{\Gr}{\operatorname{Gr}}
\newcommand{\Pic}{\operatorname{Pic}}
\newcommand{\Eu}{\operatorname{Eu}}
\newcommand{\Cone}{\operatorname{Cone}}
\newcommand{\J}{\operatorname{J}}
\newcommand{\JG}{\operatorname{J}_{\mathrm{G}}}
\newcommand{\Cl}{\operatorname{Cl}}
\newcommand{\rk}{\operatorname{rk}}
\newcommand{\corank}{\operatorname{corank}}
\newcommand{\PP}{\mathbb{P}}
\newcommand{\RR}{\mathbb{R}}
\newcommand{\QQ}{\mathbb{Q}}
\newcommand{\ZZ}{\mathbb{Z}}
\newcommand{\FF}{\mathbb{F}}
\newcommand{\NNN}{{\mathscr{N}}}
\newcommand{\OOO}{{\mathscr{O}}}
\newcommand{\EEE}{{\mathscr{E}}}
\newcommand{\LLL}{{\mathscr{L}}}
\newcommand{\type}[1]{$\mathrm{#1}$}
\newcommand{\xref}[1]{\textup{\ref{#1}}}
\newcommand{\g}{{\mathrm{g}}}
\newcommand{\bil}[2]{\langle #1,\, #2 \rangle}
\begin{document}
\author{Yuri Prokhorov}
\thanks{
The research has been funded within the framework of the HSE University Basic Research Program.
}
\address{
\newline
Steklov Mathematical Institute of Russian Academy of Sciences, Moscow, 
Russia
\newline
Faculty of Mathematics,
Moscow State University, Moscow, Russia
\newline
AG Laboratory, HSE University, Moscow, 
Russia
}
\email{prokhoro@mi-ras.ru}
\title{Rationality of Fano threefolds with 
terminal Gorenstein singularities, II}
\maketitle

\begin{abstract}
We classify nonrational Fano threefolds $X$ with terminal Gorenstein singularities 
such that $\mathrm{\rk}\, \mathrm{\Pic}(X)=1$, $(-K_X)^3\ge 8$, and $\mathrm{\rk}\, \mathrm{\Cl}(X)\le 2$.
\end{abstract}

\section{Introduction}
Historically, rationality problem for Fano varieties is 
one of the most important and difficult problems in the birational algebraic geometry.
It is mostly solved for smooth Fano threefolds (see \cite[Ch.~8-9]{IP99} and references therein) but it is still widely open for singular ones. 
In this paper we discuss
rationality of Fano threefolds with terminal Gorenstein singularities.
The main reason for studying such objects is 
that they arise
naturally in the minimal model program, in classification of finite subgroups of the space Cremona group \cite{P:G-MMP}, and in many other instances.

This work is a sequel to \cite{P:ratF-1}.
As in \cite{P:ratF-1}, we concentrate on the class of Fano threefolds of Picard number 1 and genus $g\ge 5$
(a similar classification in the case $g\le 4$ is expected to be much longer, cf. \cite{Kaloghiros2012}).
For each value of genus $g=g(X)$ and Fano index $\iota(X)$ these varieties form an irreducible family.
This follows from the result \cite{Namikawa:Fano} and explicit description of smooth members 
of the families \cite[\S\S~3.3, 4.1, 5.1]{IP99}.

A natural invariant of a singular Fano threefold is the rank of the Weil divisor class group $\Cl(X)$. 
The case $\rk\Cl(X)=1$ was considered in \cite{P:ratF-1}
(for the notation we refer to~\ref{r-not}):
\begin{theorem}
\label{th:rho=1}
Let $X$ be a Fano threefold with terminal $\QQ$-factorial singularities and 
$\uprho(X)=1$.
If $X$ is not rational, then for $X$ one of the following possibilities holds:
\begin{enumerate}
\item
\label{fact:iota=2}
$\iota(X)=2$, $\dd(X)\le3$, and if $\dd(X)=3$, then $X$ is nonsingular;

\item
\label{fact:iota=1}
$\iota(X)=1$, $\g(X)\le 6$;

\item
\label{fact:X14}
$\iota(X)=1$, $X=X_{14}\subset\PP^9$ is a smooth threefold of genus $8$.
\end{enumerate}
Conversely, a general variety in the above families is nonrational.
Moreover, any smooth variety from \xref{fact:iota=2} and \xref{fact:X14},
and from \xref{fact:iota=1} with $\g(X)\in \{2,\, 3,\, 5\}$
is nonrational.
\end{theorem}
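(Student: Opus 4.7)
The plan is to combine the classification of Fano threefolds with terminal Gorenstein singularities and $\uprho(X)=1$ with a case-by-case rationality analysis. By Namikawa's smoothing theorem together with the classification of smooth Fano threefolds (Iskovskikh, Mukai), such $X$ fall into finitely many irreducible families indexed by $(\iota(X),\dd(X))$ or $(\iota(X),\g(X))$, each containing smooth members. So I would proceed family by family, proving either rationality of a general member or nonrationality in the sense required.

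For the nonrationality assertions, the main tool is the Clemens--Griffiths intermediate Jacobian obstruction. Applied to smooth double quartics, smooth cubic threefolds, smooth $X_{14}$, and smooth $V_{2}$, $V_{4}$, $V_{8}$, it immediately yields the ``smooth'' clause of the final sentence. For a general member of each listed family with singularities, one combines specialization from the smooth case with deformation invariance of the Clemens--Griffiths obstruction, or passes to a small resolution and analyzes its polarized Hodge structure, to conclude nonrationality of a general member.

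For the rationality of every family \emph{not} in the list, I would exhibit explicit birational maps to $\PP^3$ or to other known rational varieties. The cases $\iota\ge 3$ and $\iota=2$ with $\dd\in\{4,5\}$ are classical. Singular cubic threefolds with $\uprho=1$ are rational by projection from a terminal Gorenstein singular point. For $\iota=1$ with $\g\in\{7,9,10,12\}$ Mukai's models realize a general member as a linear section of a homogeneous variety and give rationality; the remaining (singular) members, as well as singular $X_{14}$, are then handled via Sarkisov links initiated by blowing up a singular point on $X$, or by projection from a curve of low degree passing through singular points.

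The main obstacle is uniformity in the singular case: one must construct a Sarkisov link to a rational target for \emph{every} singular member of a non-listed family (not only a generic one), which requires a detailed classification of extremal contractions on the $\QQ$-factorialization of $X$ together with an analysis of linear systems having prescribed behavior at the Gorenstein terminal points. A secondary difficulty is upgrading ``general member is nonrational'' to ``every smooth member is nonrational'' in (i), (iii) and (ii) with $\g\in\{2,3,5\}$; this relies on deformation invariance of the intermediate Jacobian dimension and on verifying that the Clemens--Griffiths obstruction persists across the entire smooth locus of the corresponding family.
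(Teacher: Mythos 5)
You should first be aware that the paper you were given does not prove this statement at all: Theorem~\ref{th:rho=1} is quoted verbatim from part I of the series (\cite{P:ratF-1}), where the argument is a family-by-family analysis based on Namikawa's smoothing and the classification by $(\iota,\g)$, known nonrationality results for the listed families, and Takeuchi-style two-ray games (Sarkisov links started at a singular point, or double projections) to prove rationality of every remaining singular member. Your proposal correctly guesses this overall shape, so the issue is not the architecture but the execution of two of its three ingredients.

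There are concrete gaps. (1) The nonrationality toolkit is misapplied: Clemens--Griffiths does not ``immediately yield'' the smooth cases $\g\in\{2,3\}$ of \ref{fact:iota=1} (sextic double solids and quartic threefolds), for which the known proofs are birational rigidity \`a la Iskovskikh--Manin, nor all smooth degree-one del Pezzo threefolds in \ref{fact:iota=2}; the intermediate Jacobian criterion is only available for the cubic, the degree-two case, $\g=5$, and $X_{14}$ (via the cubic). (2) Your plan to treat general singular members by ``specialization from the smooth case with deformation invariance of the Clemens--Griffiths obstruction'' runs in the wrong direction: specialization theorems transfer rationality from the generic fibre to a mildly singular special fibre (equivalently, nonrationality from the special fibre to the generic one), and indeed nodal degenerations of nonrational smooth Fanos are typically rational (nodal cubics, nodal $X_{14}$) --- which is precisely why singular members drop out of the nonrational list. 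Fortunately the ``general variety'' clause concerns families whose general member is smooth, so this step is unnecessary, but as formulated it would fail. (3) The real content of the forward implication --- that \emph{every} $\QQ$-factorial terminal member of the non-listed families (singular $X_{14}$, $X_{16}$, $X_{18}$, $X_{22}$, singular cubics, del Pezzo threefolds of degree $\ge 4$) is rational --- is only flagged as ``the main obstacle''; note that since $X$ is already $\QQ$-factorial with $\uprho(X)=1$ there is no small $\QQ$-factorialization to exploit, and the proof requires the numerical classification of links beginning with a divisorial extraction over a singular point, which your proposal does not attempt. So what you have is a plausible plan consistent with the known strategy, not a proof, and two of its announced tools need to be replaced (rigidity results for $\g\le 3$ and $\dd=1$; no specialization argument for singular members).
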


The first our result concerns with the case $\rk\Cl(X)=2$.

\begin{theorem}
\label{th:rho=2}
Let $X$ be a Fano threefold with terminal Gorenstein singularities
such that $\uprho(X)=1$ and $\rk\Cl(X)=2$. Assume that $X$ is not rational and 
$g:=\g(X)\ge 5$. 
Then there is the following Sarkisov link:
\begin{equation}
\label{eq:diagram}
\vcenter{
\xymatrix{
&\hat{X}\ar[dr]^\tau \ar@{-->}[rr]^{\chi}\ar[dl]_f&&\hat{X}'\ar[dr]^{f'} 
\ar[dl]_{\tau'}
\\
Y&&X&&Y'
}}
\end{equation}
where $\tau$ and $\tau'$ are two small $\QQ$-factorializations and $\chi$ is a flop.
The maps $f$ and $f'$ are described in the following table.
\setlength{\extrarowheight}{1pt}
\newcommand{\heading}[1]{\multicolumn{1}{c}{#1}}
\newcommand{\headingl}[1]{\multicolumn{1}{c}{#1}}
\par\medskip\noindent
\begin{tabularx}{\linewidth}{c|c|X|X|c} 
\hline
& $g$ & $f$ & $f'$ & $\#$
\\\hline 
\multicolumn{5}{c}{symmetric links}
\\\hline 
\nr\label{dP-dP}& $5$ & 
del Pezzo fibration of degree $4$ over $\PP^1$
& del Pezzo fibration of degree $4$ over $\PP^1$& $4$
\\\hline 
\nr\label{blowup-V14}&$5$ & 
blowup of a conic on a smooth threefold $Y_{14}\subset\PP^9$ 
& 
blowup of a conic on a smooth threefold $Y_{14}'\subset\PP^9$ & $10$
\\\hline 
\nr\label{blowup-W2-point}&$5$ & 
blowup of a smooth point on a del Pezzo threefold $Y_2$ 
& 
blowup of a smooth point on $Y_2$ &$12$
\\\hline 
\nr\label{conic-conic}& $8$ & 
conic bundle over $\PP^2$ with discriminant curve of degree $5$& 
conic bundle over $\PP^2$ with discriminant curve of degree $5$
&$1$
\\\hline 
\nr\label{blowup-cubic-point}&$9$ & 
blowup of a point on a smooth cubic $Y_3\subset\PP^4$ 
& 
blowup of a point on $Y_3\subset\PP^4$ &$6$
\\\hline 
\multicolumn{5}{c}{non-symmetric links}
\\\hline 
\nr\label{g=5:trig}&$5$ & 
del Pezzo fibration of degree $3$ over $\PP^1$
& 
conic bundle over $\PP^2$ with discriminant curve of degree $7$ 
&$1$
\\\hline 
\nr\label{g=5:conic-G}& 
$5$ & 
blowup of a \type{cA_1}-point on locally factorial threefold 
$Y_{10}^s\subset\PP^{7}$ 
&
conic bundle over $\PP^2$ with discriminant curve of degree $6$
&$6$
\\\hline 
\nr\label{dP-conic}& $6$ & 
del Pezzo fibration of degree $4$ over $\PP^1$
& 
conic bundle over $\PP^2$ with discriminant curve of degree $6$
& $2$
\\\hline 
\nr\label{conic-blowup}& $6$ & 
blowup of a line on a smooth threefold $Y_{14}\subset\PP^{9}$ 
& 
conic bundle over $\PP^2$ with discriminant curve of degree $5$ 
&$6$  
\\\hline 
\nr\label{g=6:nonG}&$6$ & 
blowup of a rational twisted cubic curve on a smooth 
cubic $Y_3\subset\PP^4$ 
& blowup of a point $\frac12(1,1,1)$ on $Y_{\frac{21}2}$&$6$
\\\hline 
\nr\label{g=6:trig}&$6$ & blowup of a line on a del Pezzo threefold $Y_2$ & 
del Pezzo fibration of degree $3$ &$1$
\\\hline 
\nr\label{dP-blowup}& $8$ & 
blowup of a conic on a smooth cubic $Y_3\subset\PP^4$
& 
del Pezzo fibration of degree $4$ over $\PP^1$
&$1$
\\\hline 
\end{tabularx}
\noindent
where $\#$ is the number of singular points on $X$ under the assumption that the variety is general in the corresponding family. 

\noindent
All the varieties described in 
\ref{blowup-V14}, 
\ref{conic-conic},
\ref{blowup-cubic-point},
\ref{conic-blowup}, 
\ref{g=6:nonG}, 
\ref{dP-blowup}
are not rational.
General varieties described in 
\ref{dP-dP}, 
\ref{blowup-W2-point}, 
\ref{g=5:trig},
\ref{g=5:conic-G}, \ref{dP-conic}, \ref{g=6:trig}
are not rational.
\end{theorem}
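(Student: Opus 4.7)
The plan is to run the two-ray game on a small $\QQ$-factorialization of $X$ and then enumerate the resulting pairs $(f,f')$ by numerical invariants. Since $X$ has terminal Gorenstein singularities with $\uprho(X)=1$ and $\rk\Cl(X)=2$, it is not $\QQ$-factorial, so I take a small $\QQ$-factorialization $\tau\colon \hat{X}\to X$ with $\uprho(\hat{X})=2$. The divisor $-K_{\hat{X}}=\tau^{*}(-K_X)$ is Cartier, nef and big, so $\hat{X}$ is a Gorenstein terminal weak Fano threefold of Picard number $2$. The Mori cone $\Mori(\hat{X})$ has exactly two extremal rays: one, $R_\tau$, is $K_{\hat{X}}$-trivial (the flopping ray of $\tau$); the other, $R$, is $K_{\hat{X}}$-negative and gives a Mori extremal contraction $f\colon \hat{X}\to Y$. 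Flopping along $R_\tau$ produces $\chi\colon \hat{X}\dashrightarrow \hat{X}'$ and a second small $\QQ$-factorialization $\tau'\colon \hat{X}'\to X$; the ray complementary to the flopping one on $\hat{X}'$ then yields $f'\colon \hat{X}'\to Y'$. This constructs the diagram \eqref{eq:diagram}.

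The next step is to classify $f$ and $f'$. Each is either (a) a del Pezzo fibration over $\PP^1$, (b) a conic bundle over a smooth surface, which necessarily has $\uprho=1$ and is thus $\PP^{2}$, or (c) a divisorial contraction onto a Gorenstein terminal Fano threefold $Y$ with $\uprho(Y)=1$, classified by the extracted locus (a smooth or cDV point, a line, a conic, or a twisted cubic) on a known $Y$. For each type I would compute $(-K_{\hat{X}})^{3}=2g-2$, the discriminant degree in the conic bundle case, and the class of the exceptional divisor in the divisorial case, via standard intersection formulas, then match with $g\ge 5$ and the classification of Gorenstein terminal Fano threefolds of Picard number $1$ and small genus. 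This yields a finite list of candidates for $f$, and symmetrically for $f'$. The two lists must be compatible: the flop $\chi$ preserves $(-K)^{3}$, the ample model $X$, and the configuration of flopping curves — which coincide with the singular points of $X$ and give the column $\#$ in the table. Most numerical combinations are eliminated by this matching, leaving exactly the twelve rows.

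For existence, each surviving case is realized by an explicit construction: take $Y$ of the prescribed type, blow up the prescribed subvariety (or, for the fibration cases, build $\hat{X}$ as a general section of a suitable $\PP^{r}$-bundle over $\PP^{1}$ or $\PP^{2}$), and check that the second extremal contraction is as claimed and that the anticanonical model has $\rk\Cl=2$. Non-rationality in cases \ref{blowup-V14}, \ref{conic-conic}, \ref{blowup-cubic-point}, \ref{conic-blowup}, \ref{g=6:nonG}, \ref{dP-blowup} is inherited by birationality from the classical non-rationality (Clemens--Griffiths) of the smooth cubic threefold $Y_3\subset\PP^{4}$, the smooth $Y_{14}\subset\PP^{9}$, or an associated standard conic bundle with large discriminant. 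For the other generic non-rationality statements one applies the intermediate Jacobian / Prym method of Clemens--Griffiths, Shokurov and Beauville to $f$ or $f'$ and shows that $\J(\hat{X})$ is not isomorphic as a principally polarized abelian variety to a product of Jacobians of curves for a general parameter. The main obstacle is precisely this last step: since $\hat{X}$ is only terminal Gorenstein, one must either pass to a small resolution (available because the flopping curves give only $cA$ singularities) or adapt the Prym obstruction to singular del Pezzo fibrations of degree $3$ and $4$ and to conic bundles whose total space has isolated $cDV$ points — both extensions require case-specific verification of the irreducibility and degeneration hypotheses behind the classical criteria.
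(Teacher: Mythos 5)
Your skeleton is the same as the paper's (two small $\QQ$-factorializations joined by a flop, classification of the pair of extremal contractions, explicit constructions, classical nonrationality inputs), but the decisive step --- showing that the matching of $f$ and $f'$ leaves \emph{exactly} these twelve rows --- is asserted rather than argued, and as you describe it it would fail in concrete places. First, your list of possible divisorial contractions (``onto a Gorenstein terminal Fano threefold $Y$'') omits type \type{E_5}, the contraction of a plane to a non-Gorenstein $\frac12(1,1,1)$-point; this case must be dealt with explicitly (the paper rules it out for one side of the link by a computation with the flop-invariant form $\bil{D_1}{D_2}=(-K)\cdot D_1\cdot D_2$ on $\Cl(X)$), and it genuinely occurs as $f'$ in row \ref{g=6:nonG}, where $Y'=Y_{21/2}$ is non-Gorenstein --- so restricting to Gorenstein targets both loses a row and leaves the \type{E_5} possibilities unexcluded. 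Second, you presuppose that the centers of the curve blowups are lines, conics or twisted cubics. A priori the curve $\Gamma$ is arbitrary, and bounding its degree and arithmetic genus is the substantive part of the proof: one needs that $Y$ is itself nonrational, hence on the short list of Theorem~\ref{th:rho=1}, together with the blowup formulas for $(-K)^3$ and $(-K_{\hat X})^2\cdot E$, very ampleness of $-K_Y$ (or the double-cover structure when $\iota(Y)=2$, $\dd(Y)=2$), and Castelnuovo's bound; moreover several numerically consistent possibilities are eliminated only by showing that $-K_{\hat X}$ would then be ample, contradicting $\rk\Cl(X)=2$ with $\uprho(X)=1$.

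Third, and most seriously, purely numerical matching does not pin down the fibration--fibration cases. Writing $M'\sim aH-M$ in $\Cl(X)$ and using the flop-invariant form, the conic bundle/conic bundle case only gives $\deg\Delta=\deg\Delta'=13-g$ with $g\in\{5,6,7,8\}$ all allowed; one must then argue geometrically, computing $(F^{\sharp})^3=7-g$ and showing it is negative via the flopped curves, to force $g=8$. This is precisely case \ref{conic-conic}, which was missed in the earlier literature, so ``most combinations are eliminated by matching'' is not a safe claim, and your proposed invariants (preservation of $(-K)^3$, of the ample model, and of the flopping configuration) are too weak to exclude $g=5,6,7$ there. On nonrationality your plan agrees with the paper's for the cases reduced to the cubic threefold or to $Y_{14}$; for the ``general member'' cases the paper sidesteps your acknowledged obstacle (Prym/intermediate-Jacobian criteria on singular total spaces) by constructing general members whose $\QQ$-factorialization $\hat X$ is smooth, and then quoting Alekseev--Shramov for degree-$4$ del Pezzo fibrations, Voisin for the quartic double solid cases, and Beauville--Shokurov for standard conic bundles with $\deg\Delta\ge 6$ (plus the sharper Corollary~\ref{cb:P2} to exclude $\deg\Delta\le 4$ in the classification step), so no extension of the classical criteria to singular fibrations is required.
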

Here we say that a link is \emph{symmetric} if the numerical invariants on the left and the right hand sides
of \eqref{eq:diagram}
are the same. However, in the cases~\ref{blowup-W2-point} and~\ref{blowup-cubic-point} we can say more:
the links are symmetric in the strongest sense, i.e. there is an involution that swaps 
$Y$ and $Y'$
(see Remarks~\ref{rem:V1}\ref{rem:V2a} and~\ref{rem:V2}\ref{rem:V2a}).

The variety $Y=Y_{10}^s\subset\PP^{7}$ from~\ref{g=5:conic-G} is locally factorial Fano threefold
with $\uprho(Y_{10}^s)=1$ and $g(Y_{10}^s)=6$ having a singular point of type \type{cA_1}; a general variety from this family 
was considered in \cite{Debarre-Iliev-Manivel-2011}.
The variety $Y'=Y_{\frac{21}2}$ from~\ref{g=6:nonG} is a non-Gorenstein Fano threefold with $(-K_{Y'})^3=\frac{21}2$
whose singular locus consists of a unique point of type $\frac12(1,1,1)$ \cite[No.~29375]{GRD}.
%

Note that in many cases the link \eqref{eq:diagram} was explicitly described earlier, see \cite[Ch.~4]{IP99}, 
\cite[Sect.~7]{Jahnke-Peternell-Radloff-II}, \cite{Takeuchi-2009}, \cite{P:factorial-Fano:e}.
However, typically the authors considered the link \eqref{eq:diagram}
with smooth varieties $\hat X$ and $\hat X'$.
For convenience of the reader we reproduce the corresponding constructions
in full generality.
Note also that the variety~\ref{conic-conic} is completely new: it was erroneously omitted in \cite[7.5]{Jahnke-Peternell-Radloff-II}.

\begin{remark}
The varieties~\ref{dP-conic}, \xref{g=6:nonG}, and~\ref{conic-blowup}
are ordinary GM varieties in the sense of \cite{Debarre-Kuznetsov:GM}
(see Propositions~\ref{ex:dP-cb}, \ref{prop:descr:g=6:nonG}, 
and~\ref{prop:descr:conic-blowup},
respectively).
\end{remark}

Let $X$ be a Fano threefold with terminal Gorenstein singularities, $\uprho(X)=1$, and $\g(X)\ge 5$. 
Then $-K_X$ is very ample \cite[Theorem~4.2]{P:ratF-1}.
Therefore it is convenient to divide these varieties into 
the following classes:
\begin{enumerate}
\renewcommand\labelenumi{\rm (\Alph{enumi})}
\renewcommand\theenumi{\rm (\Alph{enumi})}
\item 
\label{cl-0}
the anticanonical image $X=X_{2g-2}\subset \PP^{g+1}$ is not an intersection of quadrics,
\item 
\label{cl-A}
$X$ does not belong to~\ref{cl-0} and 
$X$ contains a plane,
\item \label{cl-B}
$X$ does not belong to~\ref{cl-0} nor~\ref{cl-A}.
\end{enumerate}

The class~\ref{cl-0} contains exactly two families whose general members are nonrational and appear in 
\ref{g=5:trig} and~\ref{g=6:trig} (see \cite[Theorem~4.5]{P:ratF-1}).
The class~\ref{cl-A} contains exactly one nonrational family 
whose general members appear in 
\ref{g=6:nonG} (see \cite[Theorem~5.1]{P:ratF-1}).

If in the case~\ref{cl-B} we have $\rk\Cl(X)>1$, then
there is a $\QQ$-factorialization $\hat X\to X$ and a sequence of 
divisorial Mori contractions 
\[
\hat X =\hat X_0\longrightarrow\cdots \longrightarrow \hat X_n
\]
where each variety $\hat X_i$ is a weak Fano threefold 
whose anticanonical model $X_i$ is again of type~\ref{cl-B}
 and 
$\g(X_{i+1})\ge \g(X_{i})+1$ (see \cite[\S~3]{P:ratF-1} and \cite{P:degFc}).
Moreover, we may assume that $\uprho(\hat X_n)\le 3$, the variety $\hat X_n$ has no birational Mori contractions, and
has a Mori fiber space structure 
\[
\hat X_n\longrightarrow Z,
\]
where $Z$ is either a point, $\PP^1$, $\PP^2$ or 
$\PP^1\times \PP^1$. In these settings, 
the varieties $\hat X_n$ with $\uprho(\hat X_n)=2$ which are nonrational and are not Fano are described in 
Theorem~\ref{th:rho=2} (cases~\ref{dP-dP}, \ref{conic-conic}, \ref{dP-conic}).
The varieties $\hat X_n$ with $\uprho(\hat X_n)=3$ are described
by the following theorem.

\begin{theorem}
\label{th:rho=3}
Let $X$ be a nonrational Fano threefold with terminal Gorenstein singularities
with $\uprho(X)=1$, $\g(X)\ge 5$, and $\rk\Cl(X)>2$.
Assume that for any small 
$\QQ$-factorialization $X'\to X$
the variety $X'$ has no birational Mori contractions.
Then $\rk\Cl(X)=3$,
$\g(X)= 5$, and $X=X_8\subset\PP^6$ is a special intersection of three quadrics described in Proposition~\xref{ex:r=3}. 
\end{theorem}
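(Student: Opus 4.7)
The plan is to analyze a small $\QQ$-factorialization of $X$ via its Mori cone and use the classification of Sarkisov links from Theorem~\ref{th:rho=2}. Let $\tau\colon \hat X \to X$ be a small $\QQ$-factorialization. Then $\hat X$ is a weak Fano threefold with terminal Gorenstein singularities, $\uprho(\hat X) = \rk\Cl(X) \ge 3$, and $-K_{\hat X} = \tau^*(-K_X)$ is nef and big. By the hypothesis on $X$, no extremal ray of $\NE(\hat X)$ gives a birational Mori contraction, so every $K_{\hat X}$-negative extremal ray is either a flopping small contraction or a contraction of fiber type. Since $-K_{\hat X}$ is big it cannot vanish on every extremal ray of $\NE(\hat X)$, so at least one extremal ray is of fiber type, yielding a Mori fiber space $\varphi\colon \hat X \to Z$.

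Because $\uprho(\hat X) = \uprho(Z)+1 \ge 3$, the base $Z$ is a rational surface with $\uprho(Z)\ge 2$ (a Hirzebruch surface, $\PP^1\times\PP^1$, or a blowup thereof), and $\varphi$ is a conic bundle. I would next examine every $2$-dimensional face of $\NE(\hat X)$: the two extremal rays bounding such a face give, after flopping if necessary, a Sarkisov link between two Mori fiber spaces connected by flops. Such a link must match one of the cases of Theorem~\ref{th:rho=2} in which both $f$ and $f'$ are Mori fiber spaces, namely \ref{dP-dP}, \ref{conic-conic}, \ref{g=5:trig}, or \ref{dP-conic} (with $g(X) \in \{5,6,8\}$). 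In particular, the genus of $X$ is restricted to this short list, and each face of $\NE(\hat X)$ determines the base surface of a conic bundle or the degree of a del Pezzo fibration on $\hat X$.

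The conclusion would then follow from combining the constraints coming from the distinct $2$-ray games attached to the different faces of $\NE(\hat X)$. Using the consistency of the numerical invariants $(-K_X)^3 = 2g-2$, degrees of discriminant curves, degrees of del Pezzo fibers, and the available base surfaces, one shows that $\uprho(\hat X)$ must equal $3$ and $g(X)$ must equal $5$. From the compatible conic bundle and del Pezzo fibration structures one then recovers the anticanonical image $X = X_8 \subset \PP^6$ as the special intersection of three quadrics described in Proposition~\ref{ex:r=3}.

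The main obstacle will be the combinatorial bookkeeping of how the $\uprho = 2$ links glue around $\hat X$. One must show that no configuration of extremal rays on a weak Fano with $\uprho(\hat X) \ge 4$ admits a collection of $2$-ray games all drawn from the short list above, and simultaneously exclude $g(X) \in \{6, 8\}$ at $\uprho(\hat X) = 3$ by verifying that the required multiplicity of Mori fiber space structures cannot coexist there. The remaining case must then be matched bijectively with Proposition~\ref{ex:r=3}.
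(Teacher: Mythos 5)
Your key step does not work as stated. You propose that each two-dimensional face of $\NE(\hat X)$ yields a Sarkisov link which ``must match one of the cases of Theorem~\ref{th:rho=2} in which both $f$ and $f'$ are Mori fiber spaces''. But Theorem~\ref{th:rho=2} classifies varieties with $\uprho(X)=1$ and $\rk\Cl(X)=2$, i.e.\ two-ray games over $\operatorname{Spec}\Bbbk$ centered at the anticanonical model. When $\uprho(\hat X)\ge 3$, contracting a two-dimensional face of $\NE(\hat X)$ lands on a variety of Picard rank $\uprho(\hat X)-2\ge 1$, never on a point, so the associated two-ray games are links \emph{over a positive-dimensional base} (in the actual situation: over $\PP^1$, between conic bundles over $\PP^1\times\PP^1$), and Theorem~\ref{th:rho=2} says nothing about them. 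Consequently the genus restriction $g\in\{5,6,8\}$ you extract from that list is unjustified; in fact the correct a priori dichotomy is $g=5$ or $g=7$, and the $g=7$ configuration genuinely exists once $\uprho(X)=1$ is dropped (a double cover of $\PP^1\times\PP^1\times\PP^1$ branched in an anticanonical divisor), so it can only be excluded by an argument specific to the present hypotheses, which your sketch does not contain. The remaining work you defer to ``combinatorial bookkeeping'' (ruling out $\uprho(\hat X)\ge 4$, pinning down $g$, and identifying $X$ with the intersection of three quadrics) is exactly the substance of the proof and is not carried out.

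You also miss the short argument that makes the problem tractable: since by hypothesis $\hat X$ has no birational Mori contractions, every extremal contraction is of fiber type, and it cannot be a del Pezzo fibration because its base would be $\PP^1$ with Picard number $1$, whereas the base of an extremal contraction has Picard number $\uprho(\hat X)-1\ge 2$. Hence every extremal contraction is a conic bundle onto a smooth surface of Picard number $\uprho(\hat X)-1$, and by \cite[Theorem~3.1]{P:ratF-1} that surface is $\PP^1\times\PP^1$; this immediately forces $\uprho(\hat X)=3$, i.e.\ $\rk\Cl(X)=3$, with no case analysis. Nonrationality and very ampleness of $-K_X$ then bound the discriminant bidegrees by $4\le n_i\le 5$, so the surfaces swept out by the pencils coming from the rulings are quartic del Pezzo surfaces; a computation with the pairing $\bil{D_1}{D_2}=(-K_X)\cdot D_1\cdot D_2$ on the generators of the (polyhedral) effective cone gives either $g=5$, $r=4$ with $-K_X\sim S_1+S_3\sim S_2+S_4$, or $g=7$, $r=3$ with $-K_X\sim S_1+S_2+S_3$; the latter is excluded because $\hat S_3^2\cdot\hat S_i=0$ would make the pencil $|\hat S_3|$ base point free, contradicting its negativity on a flopping curve. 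Only after this does one identify $X=X_8\subset\PP^6$ as an intersection of three quadrics, two of corank $3$, as in Proposition~\ref{ex:r=3}. Your proposal contains none of these steps, and its central reduction to Theorem~\ref{th:rho=2} is not valid.
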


The main theorems are proved in Sect.~\ref{sect:pfTh12} and~\ref{sect:pfTh14}.
In Sect.~\ref{sect:descr} we provide a detailed description of all varieties 
from the table in Theorem~\ref{th:rho=2}.

\subsection*{Acknowledgements.}
The author would like to thank Alexander Kuznetsov for fruitful discussions and 
the anonymous referee for careful reading the first version of the paper  and  useful comments.

\section{Preliminaries}

\subsection{Notation}
\label{r-not}
Throughout the paper we work over an algebraically closed field $\Bbbk$ 
of characteristic $0$. 

Let $\EEE =\oplus_{i=0}^n \OOO_{\PP^1}(a_i)$ be a vector bundle of rank $n+1$ 
on $\PP^1$ and let 
\[
\FF[a_1,\dots,a_n]:=\PP(\EEE)
\]
be its projectivization. 
We denote by $M$ and $F$ the tautological divisor and a fiber of the projection 
$\FF[a_1,\dots,a_n]\to \PP^1$, respectively.

Typically, we consider Fano threefolds with terminal Gorenstein singularities
but when we say this, it means that the singularities are not worse than that,
in particular, the variety can be smooth.
As usual, $\Pic(X)$ denotes the Picard group of a variety $X$ and 
$\uprho(X)$ is the rank of $\Pic(X)$.
$\Cl(X)$ denotes the Weil divisor class group of a normal variety~$X$.

Let $X$ be a Fano threefold with at worst Gorenstein terminal singularities. Then 
$\g(X):=\frac12(-K_X)^3+1$ is its \textit{genus}. 
The \textit{Fano index} $X$ is defined as follows 
\[
\iota(X):= \max \{ t\mid -K_X=tA,\quad A\in \Pic(X)\}.
\]
We say that $X$ is a del Pezzo threefold if $\iota(X)=2$. 
In this case, the integer
\[
\dd(X):=\frac18 (-K_X)^3
\]
is called the \textit{degree} of $X$.

\begin{proposition}[{\cite[Theorems~4.2 and~4.5]{P:ratF-1}}]
\label{prop:v-ample}
Let $X$ be a Fano threefold with terminal Gorenstein singularities and $g:=\g(X)\ge 5$. 
Assume that $X$ is not a del Pezzo threefold of degree $1$. Then the anticanonical class $-K_X$ is very ample and defines an embedding $X=X_{2g-2}\subset \PP^{g+1}$.
If furthermore $\rk\Cl(X)=2$ and $X$ is not of type~\ref{g=5:trig} nor~\ref{g=6:trig}, then $X$ is an intersection of quadrics.
\end{proposition}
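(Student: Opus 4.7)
The plan is to reduce both assertions to properties of a smooth anticanonical K3 surface section. By the extension of Shokurov's smoothness theorem to the Gorenstein canonical setting, a general element $S\in|-K_X|$ is a smooth K3 surface; by Kawamata-Viehweg vanishing the restriction maps
\[
H^0(X, -nK_X)\twoheadrightarrow H^0(S, nL), \qquad L:=-K_X|_S,
\]
are surjective for every $n\ge 0$. Here $L$ is ample on $S$ with $L^2=(-K_X)^3=2g-2\ge 8$. Base-point freeness of $|-K_X|$ is standard in this range, so the questions of very ampleness of $-K_X$ and of $X$ being cut out by quadrics transfer to the corresponding questions on $(S,L)$, and then (for quadrics) to a smooth canonical curve section $C\in|L|$ of $S$.

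For very ampleness, I would invoke Saint-Donat's classification of ample linear systems on K3: $L$ fails to be very ample only if either $L=2B$ for some ample $B$ (with $B^2=(g-1)/2$), or there is an elliptic curve $E\subset S$ with $L\cdot E=2$. In the first case, lifting $B$ to a divisor on $X$ gives $-K_X=2A$ with $A^3=(g-1)/4$, forcing $g=5$ and $A^3=1$, i.e.\ the del Pezzo threefold of degree~$1$ excluded by hypothesis. The second case would provide an elliptic pencil on $S$ descending from a divisor on $X$ whose numerics are incompatible with $g\ge 5$; this is ruled out by a direct computation on a small $\QQ$-factorialization. With $L$ very ample on $S$, very ampleness of $-K_X$ follows from the surjection above together with separation of points on distinct anticanonical sections.

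For the intersection-of-quadrics statement under $\rk\Cl(X)=2$, I would restrict further to a general smooth curve $C\in|L|$ on $S$, a canonical curve of genus $g$ in $\PP^{g-1}$, and apply the classical Enriques-Petri theorem: $C$ is cut out by quadrics unless it is hyperelliptic, trigonal, or (at $g=6$) a smooth plane quintic. The hyperelliptic case is excluded by the previous step. The remaining two obstructions correspond to specific low-$L$-degree divisor classes on $S$, which under $\rk\Cl(X)=2$ must come from Weil divisors on $X$; a Mori-theoretic analysis of a small $\QQ$-factorialization of $X$, as in \cite{P:ratF-1}, identifies the resulting Fano threefolds precisely with types~\ref{g=5:trig} and~\ref{g=6:trig}. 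When none of these obstructions occurs, quadrics lift through the chain $C\hookrightarrow S\hookrightarrow X$ using the surjections above.

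The main obstacle is the last matching step: given a low-$L$-degree class on $S$, showing under $\rk\Cl(X)=2$ that it must restrict from a Weil divisor on $X$ and that the resulting Fano threefold is one of the two listed exceptional types. This is the substantive content, and relies on the Mori-theoretic apparatus of~\cite{P:ratF-1}; the positive direction (lifting quadrics once the obstructions vanish) is then formal from Kawamata-Viehweg vanishing and the reduction to a canonical curve.
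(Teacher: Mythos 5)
First, a point of comparison: the paper does not prove this proposition at all --- it is imported from \cite[Theorems~4.2 and~4.5]{P:ratF-1} --- so the relevant benchmark is the proof given there, and your general strategy (smooth anticanonical K3 section, Kawamata--Viehweg surjectivity of restriction, Saint-Donat, Enriques--Petri) is indeed the standard machinery behind those theorems. But as written your sketch has gaps at exactly the points where the real work happens, and the two places you wave at a ``lifting'' are not fixable in the form you propose. The restriction maps $\Cl(X)\to\Pic(S)\to\Pic(C)$ are not surjective in general, and $\rk\Cl(X)=2$ does not make them so (the Picard rank of a general elephant of these singular Fanos is typically much larger than $\rk\Cl(X)$). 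So neither the elliptic pencil $E$ with $E\cdot L=2$, nor the class $B$, nor the trigonal/plane-quintic classes on $C$ can be assumed to come from Weil divisors on $X$; your dismissal of the $E\cdot L=2$ case as ``numerics incompatible with $g\ge 5$, ruled out on a small $\QQ$-factorialization'' presupposes such a descent and is, in substance, the classification of hyperelliptic Fano threefolds (cf.\ \cite{Przhiyalkovskij-Cheltsov-Shramov-2005en}), not a computation. The actual route avoids descent altogether: since $-K_X$ is ample and base point free, the anticanonical morphism is finite of degree $1$ or $2$ onto a nondegenerate threefold in $\PP^{g+1}$; the degree-$2$ (hyperelliptic) case forces the image to have minimal degree $g-1$, and for terminal Gorenstein $X$ with $g\ge 5$ this leaves only the double cover of the Veronese cone, i.e.\ the excluded del Pezzo threefold of degree $1$; in the birational case one analyzes the intersection of quadrics through $X_{2g-2}$, which is either $X$ itself or a fourfold of minimal degree, and the latter yields exactly the trigonal types~\ref{g=5:trig} and~\ref{g=6:trig} (divisors on the cones over $\PP^2\times\PP^1$ and over $\FF[1^2,2]$, as described in Section~3).

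Two further concrete slips. Saint-Donat's non-very-ample case is $L\equiv 2B$ with $B^2=2$, which already forces $L^2=8$, i.e.\ $g=5$; your version ($L=2B$ with $B$ ample, any $B^2$) is not an obstruction by itself, and the deduction ``$A^3=(g-1)/4$ forces $g=5$'' is false as numerics ($g=9,13,\dots$ also qualify --- those are del Pezzo threefolds of degree $\ge 2$, for which $-K_X$ \emph{is} very ample, so the exclusion has to come from Saint-Donat's $B^2=2$ condition, not from integrality). Finally, very ampleness of $L$ on a \emph{general} $S$ does not immediately give very ampleness of $-K_X$: every smooth elephant avoids $\Sing(X)$, so ``separation of points on distinct anticanonical sections'' says nothing at the singular points; this is again handled by the finiteness/degree analysis of the anticanonical morphism rather than by restriction to smooth members.
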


\begin{scorollary}
\label{cor:v-ample}
In the notation of Theorem~\xref{th:rho=2}, assume that $X$ is not a del Pezzo threefold of degree $1$. Then the contractions $\tau$ and $\tau'$ coincide with anticanonical maps, that is, the divisor $-K_X$ is very ample, $-K_{\hat X}=\tau^*(-K_X)$ and $-K_{\hat X'}=\tau^{\prime *}(-K_X)$.
\end{scorollary}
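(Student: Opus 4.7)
The plan is to deduce the statement in three steps: invoke Proposition~\ref{prop:v-ample} for the very ampleness of $-K_X$, verify that the small $\QQ$-factorializations $\tau$ and $\tau'$ are crepant, and then identify the anticanonical linear system of $\hat X$ with the pullback of the one on $X$.

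First I would apply Proposition~\ref{prop:v-ample} directly: since $g(X)\ge 5$ and $X$ is not a del Pezzo threefold of degree $1$, the anticanonical class $-K_X$ is very ample and realizes $X$ as $X_{2g-2}\subset\PP^{g+1}$. Next, I would recall that $\tau\colon \hat X\to X$ is small, hence is an isomorphism in codimension one, and that $X$ is Gorenstein so $K_X$ is Cartier. It follows that $\tau^{*}K_X$ is a well-defined Cartier divisor on $\hat X$ that coincides with $K_{\hat X}$ away from a closed subset of codimension $\ge 2$; by normality of $\hat X$ this forces $K_{\hat X}=\tau^{*}K_X$, i.e. $\tau$ is crepant. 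The same reasoning applies verbatim to $\tau'$.

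Finally, the projection formula together with $\tau_{*}\OOO_{\hat X}=\OOO_X$ yields
\[
H^{0}\bigl(\hat X,-K_{\hat X}\bigr)=H^{0}\bigl(\hat X,\tau^{*}(-K_X)\bigr)=H^{0}\bigl(X,-K_X\bigr),
\]
so the linear system $|-K_{\hat X}|$ is the pullback of $|-K_X|$ via $\tau$. Hence the morphism defined by $|-K_{\hat X}|$ factors as $\hat X\xrightarrow{\tau} X\hookrightarrow \PP^{g+1}$, which identifies $\tau$ with the anticanonical map of $\hat X$; the argument for $\tau'$ is identical.

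There is no substantive obstacle: the statement is a clean consequence of Proposition~\ref{prop:v-ample} and the standard fact that a small birational morphism to a Gorenstein variety is crepant. The only point that deserves explicit verification is the equality $K_{\hat X}=\tau^{*}K_X$, which in our setting is immediate from smallness and normality.
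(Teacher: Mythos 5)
Your argument is correct and is essentially the paper's own (the corollary is stated as an immediate consequence of Proposition~\ref{prop:v-ample}): very ampleness of $-K_X$ from that proposition, crepancy of the small maps $\tau,\tau'$ since $K_X$ is Cartier and the exceptional loci have codimension $\ge 2$, and the identification of $|-K_{\hat X}|$ with the pullback of $|-K_X|$. Nothing further is needed.
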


We need some information about codimension two linear sections of the Grassmannian
$\Gr(2,5)\subset \PP^9$.

\begin{proposition}
\label{prop:dP5}
Let $G$ be the Grassmannian $\Gr(2,5)$ and let $G\subset \PP(\wedge^2\Bbbk^5)=\PP^9$ be its Pl\"ucker 
embedding. Let
$W:=\PP^7 \cap G\subset \PP^7$ be a codimension two linear section. 
Let $\mathscr{H}$ be the pencil of hyperplane sections passing through $W$.
Suppose that $\dim\Sing(W)\le 1$.
Then one of the following holds:
\begin{enumerate}
\item \label{prop:dP5:0}
$W$ is smooth and any member $H\in \mathscr{H}$
is smooth,
\item \label{prop:dP5:1}
$\Sing(W)$ is a line and there is exactly one singular member $H_1\in \mathscr{H}$ and this $H_1$ is a Schubert variety $\upsigma_{1.0}$,
\item \label{prop:dP5:2}
$\Sing(W)$ is a pair of skew lines and there are exactly two singular members $H_1,\, H_2\in \mathscr{H}$ and these $H_i$ are Schubert varieties $\upsigma_{1.0}$.
\end{enumerate}
\end{proposition}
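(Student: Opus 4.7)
The plan is to use the classical duality between hyperplanes $H\subset\PP(\wedge^2 V)=\PP^9$ (with $V=\Bbbk^5$) and nonzero $\omega\in\wedge^2 V^*$, under which the pencil $\mathscr{H}$ becomes a line $\ell\subset\PP(\wedge^2 V^*)=\PP^9$. A hyperplane section $H_\omega\cap G$ is singular iff $\omega$ has rank $2$, i.e.\ iff $[\omega]\in\Gr(2,V^*)$; for such $\omega=u\wedge v$ with $3$-dimensional kernel $K_\omega=\ker u\cap\ker v$, the section $H_\omega\cap G=\{\Lambda:\dim(\Lambda\cap K_\omega)\ge 1\}$ is the Schubert variety $\upsigma_{1.0}$, and a direct tangent space computation identifies its singular locus with $\Gr(2,K_\omega)\cong\PP^2\subset G$.

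First I would translate the singularity of $W$ at a point $p$ into pencil data. Since $T_pG$ is $6$-dimensional in $\PP^9$, the point $p\in W$ is singular iff $T_pG\cap\PP^7$ has dimension $\ge 5$, iff there exists $H\in\mathscr{H}$ with $T_pG\subset H$. Consequently
\[
\Sing(W)=\bigcup_{H\in\mathscr{H},\;H\cap G\text{ singular}}\bigl(\Sing(H\cap G)\cap\PP^7\bigr).
\]
For each singular $H=H_\omega$, the inclusion $\PP^7\subset H$ forces $\Sing(H\cap G)\cap\PP^7$ to coincide with $\PP^2\cap H'$ for any other $H'\in\mathscr{H}$, which is either all of $\PP^2$ or a line; the former gives $\dim\Sing(W)\ge 2$ and is excluded by the hypothesis.

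The heart of the argument is to control the scheme $\ell\cap\Gr(2,V^*)$. For two distinct decomposable forms $\omega_i=u_i\wedge v_i$ on $\ell$, the identity
\[
(\omega_1+s\omega_2)\wedge(\omega_1+s\omega_2)=2s\,u_1\wedge v_1\wedge u_2\wedge v_2
\]
shows that $\omega_1+s\omega_2$ has rank $\le 2$ for all $s$ iff $\dim\mathrm{span}(u_1,v_1,u_2,v_2)\le 3$. If this holds, then $\ell\subset\Gr(2,V^*)$; all forms on $\ell$ then share a common rank-$1$ factor $u_0\in V^*$, and $\Gr(2,\ker u_0)\cong\Gr(2,4)$ is a $4$-dimensional sub-Grassmannian contained in $W$ as a proper component, forcing $W$ to be reducible and $\dim\Sing(W)\ge 2$, contradiction. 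Otherwise $\omega_1,\omega_2$ are the only rank-$2$ points on $\ell$, and both occur simply. An analogous computation with a tangent direction $\omega_1\in T_{\omega_0}\Gr(2,V^*)$ shows that $\omega_0$ is then the unique rank-$2$ point (of multiplicity two). Thus $|\ell\cap\Gr(2,V^*)|\in\{0,1,2\}$ as a set.

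It remains to match the three possibilities to \xref{prop:dP5:0}--\xref{prop:dP5:2} and to verify skewness. For $0$ singular members $W$ is smooth; for a single singular member $H_\omega$, $\Sing(W)$ is the line $\Gr(2,K_\omega)\cap\PP^7$. In the two-point case, writing $\omega_i=u_i\wedge v_i$ with kernels $K_i=\ker u_i\cap\ker v_i$ and $\dim\mathrm{span}(u_1,v_1,u_2,v_2)=4$, one has $K_1\cap K_2=\mathrm{span}(u_1,v_1,u_2,v_2)^{\perp}\subset V$ of dimension $1$, so $\Gr(2,K_1)\cap\Gr(2,K_2)=\emptyset$ in $G$; the two singular planes $\PP^2\subset\PP^9$ are therefore disjoint and their $\PP^7$-sections yield two disjoint, hence skew, lines in $W$. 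The delicacy I anticipate is the verification that $\ell\subset\Gr(2,V^*)$ forces $W$ to be reducible with high-dimensional singular locus, but this follows from the explicit Pl\"ucker-coordinate description just sketched.
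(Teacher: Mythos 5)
Your proposal is correct and follows essentially the same route as the paper: both identify singular hyperplane sections with rank-$2$ skew forms (Schubert varieties $\upsigma_{1,0}$, singular along a plane $\Gr(2,K_\omega)$), bound the number of singular members of the pencil by $2$ unless the whole pencil consists of rank-$2$ forms (your identity $(\omega_1+s\omega_2)\wedge(\omega_1+s\omega_2)=2s\,\omega_1\wedge\omega_2$ is exactly what the paper extracts from self-duality of $\Gr(2,5)$, namely that a line meets $G^\vee$ in at most two points or lies in it), exclude the degenerate case using $\dim\Sing(W)\le 1$, and get the configuration of the singular lines from the kernels; your skewness argument ($\dim(K_1\cap K_2)=1$, so the two singular planes are disjoint) is a slightly more direct variant of the paper's contradiction via $\Pi_1\subset H_2$. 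The one place where your written justification is weaker than what you need is the excluded case $\ell\subset\Gr(2,V^*)$: reducibility of $W$ into two $4$-dimensional components in $\PP^7$ forces only $\dim\Sing(W)\ge 1$ by a naive dimension count, so you should either identify the residual component explicitly --- here $W=\Gr(2,\ker u_0)\cup\{\Lambda:\Lambda\cap K_{12}\neq 0\}$ with $K_{12}$ the common $2$-dimensional kernel, and the two components meet along a threefold --- or argue inside your own framework that the infinitely many rank-$2$ members contribute pairwise distinct lines to $\Sing(W)$ (the planes $\Gr(2,K_\lambda)$ meet pairwise only in the point $[K_{12}]$), so $\Sing(W)$ is at least $2$-dimensional. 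With that short verification supplied, the proof is complete.
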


\begin{proof}
Any hyperplane $\Lambda\subset \PP^9=\PP(\wedge^2\Bbbk^5)$ is given by 
a non-zero skew-symmetric bilinear form $q_\Lambda\in (\wedge^2\Bbbk^{5})^\vee$
and the hyperplane section $G\cap \Lambda$ is smooth if and only if $\rk (q_\Lambda)=4$.
If $\rk (q_\Lambda)=2$, then $G\cap \Lambda$ is a Schubert variety $\upsigma_{1,0}$.
Thus for any singular hyperplane section $H=G\cap \Lambda$ one can associate 
a two dimensional linear subspace $\PP(\ker(q_\Lambda))=\Sing(H)\subset \PP^4$.
From this, one can see that the Grassmannian $G=\Gr(2,5)$ is self-dual, i.e. its dual variety $G^\vee\subset \PP^{9\, \vee}$ is isomorphic to $\Gr(3,5)\simeq \Gr(2,5)$. Therefore, the pencil $\mathscr{P}$ of hyperplanes passing through $W$ either meets $G^\vee$ in at most $2$ points or is contained in $G^\vee$.

Suppose that $W$ is singular at $P_1\in W\subset G$.
Then some hyperplane $\Lambda_1\in \mathscr{P}$
contains the tangent space $T_{P_1,G}$. 
Then the hyperplane section $H_1:= \Lambda_1\cap G$ is a Schubert variety $\upsigma_{1,0}$, so it is singular
along a plane $\Pi_1$. 
Let $\Lambda_2\in \mathscr{P}$ be a hyperplane different from $\Lambda_1$.
Then $W=\Lambda_1\cap \Lambda_2\cap G$ is singular along the line $l_1:=\Pi_1\cap \Lambda_2$. 
Now assume that $W$ is singular at $P_2\in W\setminus l_1$. As above, we may assume that $\Lambda_2\supset T_{P_2,G}$ and so
the hyperplane section $H_2:= \Lambda_2\cap G$ is singular
along a plane $\Pi_2\ni P_2$. Then $W$ is singular along the line $l_2:=\Pi_2\cap \Lambda_1$. Since $\dim\Sing(W)\le 1$ by our assumption, the above arguments show 
that $\mathscr{P}\not\subset G^\vee$ and $\Sing(W)=l_1\cup l_2$. 
If $l_1\cap l_2\neq \varnothing$, then 
$\Pi_1\cap \Pi_2\neq \varnothing$ but in this case $\Pi_1\subset H_2$. Hence, $\Pi_1\subset W$ and $W$ is singular along 
$\Pi_1$, a contradiction.
\end{proof}

\begin{definition}
\label{def:DP5}
A four-dimensional Fano variety $W$ with Gorenstein terminal singularities 
such that $-K_W=3H$ and $H^4=5$, where $H$ is a Cartier divisor
is called \textit{quintic del Pezzo fourfold}. 
\end{definition}

The varieties described in Proposition~\ref{prop:dP5} are quintic del Pezzo fourfolds.

\begin{theorem}[{\cite{Fujita:DP-2}}]
\label{thm:DP5}
A smooth quintic del Pezzo fourfold is unique up to isomorphism and isomorphic to a smooth section of the Grassmannian 
$\Gr(2,5)\subset \PP^9$ by a linear subspace of codimension $2$, i.e. 
the variety from \xref{prop:dP5}\xref{prop:dP5:0}.
\end{theorem}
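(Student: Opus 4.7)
The plan is to embed $W$ into $\PP^7$ via $|H|$, reduce to the classical uniqueness of smooth quintic del Pezzo threefolds via a general hyperplane section, and then lift the resulting identification up to the fourfold.

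First, since $-K_W=3H$ is ample, Kawamata--Viehweg vanishing gives $h^i(W,mH)=0$ for $i>0$ and $m\ge -2$, so $h^0(W,mH)=\chi(W,mH)$ in this range; in particular $\chi(\OOO_W)=1$. Combined with Hirzebruch--Riemann--Roch and the numerical data $H^4=5$, $K_W=-3H$, a routine computation yields $h^0(W,H)=8$ and $h^0(W,2H)=31$. Very ampleness of $H$ follows from Fujita's general basepoint-freeness results for del Pezzo manifolds of degree $\ge 3$. Hence $|H|$ embeds $W$ in $\PP^7$ as a nondegenerate fourfold of degree $5$, and the space $I_2(W)$ of quadrics containing $W$ in $\PP^7$ has dimension $\binom{9}{2}-31=5$.

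Second, by Bertini a general hyperplane section $V:=W\cap \PP^6$ is smooth, and adjunction gives $-K_V=2H|_V$ with $(H|_V)^3=5$; so $V\subset \PP^6$ is a smooth del Pezzo threefold of degree $5$ embedded anticanonically. By the classical uniqueness theorem (Iskovskih; cf.\ \cite[\S~3.3]{IP99}), $V$ is projectively equivalent to the codimension-$3$ linear section $\Gr(2,5)\cap \PP^6$ of the Pl\"ucker embedding.

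Third, I would lift this identification from $V$ to $W$. The restriction $I_2(W)\to I_2(V)$ is injective: a kernel element has the form $\ell\cdot m$ where $\ell$ is an equation of $\PP^6\subset \PP^7$, and vanishing of $\ell\cdot m$ on $W$ forces $m|_W=0$, hence $m=0$ since $W\subset\PP^7$ is nondegenerate. Both sides being $5$-dimensional, the map is an isomorphism, identifying the net of quadrics cutting out $W$ with the net of Pl\"ucker relations cutting out $V$. Projective normality of $W$, again a consequence of Kawamata--Viehweg, shows that its homogeneous ideal is generated by these $5$ quadrics. The main obstacle is the last step: one must promote the coincidence of quadric nets to a genuine linear inclusion $\PP^7\hookrightarrow \PP^9$ under which $W$ maps onto $\Gr(2,5)\cap \PP^7$. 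This amounts to extending the Pl\"ucker structure on the net of quadrics uniquely from $\PP^6$ to $\PP^7$, which can be carried out either by a direct computation in Pl\"ucker coordinates or via a Hilbert-scheme argument showing that smooth quintic del Pezzo fourfolds in $\PP^7$ containing a fixed smooth quintic del Pezzo threefold as hyperplane section form a single projective equivalence class; this yields the required embedding into $\Gr(2,5)$ and the uniqueness of $W$ up to isomorphism.
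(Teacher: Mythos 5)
The paper itself gives no proof of this statement: it is quoted directly from Fujita's classification of del Pezzo manifolds, so your sketch has to be judged on its own. Its preliminary steps are fine: $h^0(W,H)=8$, very ampleness of $H$, the degree-$5$ embedding $W\subset\PP^7$, the count $\dim I_2(W)=36-31=5$, and the identification of a general hyperplane section $V=W\cap\PP^6$ with the unique smooth quintic del Pezzo threefold $\Gr(2,5)\cap\PP^6$ are all standard. The genuine gap is the last step, which is exactly where the content of the theorem lies. Knowing that the restriction $I_2(W)\to I_2(V)$ is an isomorphism only tells you that the five quadrics cutting out $W$ are \emph{some} extensions $q_i+\ell_i x_7$ of the Pl\"ucker quadrics of $V$, with $\ell_i$ arbitrary linear forms; you must still show that such a system of extensions defining a smooth quintic fourfold is, after a projective change of coordinates fixing $\PP^6$, again of Pl\"ucker type, i.e.\ comes from a $\PP^7\subset\PP^9$ containing the given $\PP^6$. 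The two routes you offer are not carried out: the ``Hilbert-scheme argument showing that smooth quintic del Pezzo fourfolds containing a fixed $V_5$ as hyperplane section form a single projective equivalence class'' is essentially a restatement of the uniqueness being proved, and the ``direct computation in Pl\"ucker coordinates'' is precisely the nontrivial extension (ladder-climbing) analysis that Fujita's proof performs. Without it, nothing excludes a priori a non-Pl\"ucker extension of the quadric net, so the theorem is not established.

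Two smaller inaccuracies feed into the same step. Projective normality of $W$ is not ``again a consequence of Kawamata--Viehweg'': vanishing of $H^i(W,mH)$ gives the dimensions $h^0(mH)$, but projective normality requires surjectivity of the multiplication maps $\operatorname{Sym}^m H^0(H)\to H^0(mH)$, which needs a separate argument. And even granting projective normality, it does not follow that the homogeneous ideal of $W$ is generated by its five quadrics; that $W$ is scheme-theoretically (let alone ideal-theoretically) cut out by $I_2(W)$ is an assertion you use but do not justify, and your comparison of quadric nets silently relies on it. So the overall strategy (restrict to a hyperplane section, invoke Iskovskikh's uniqueness of $V_5$, extend) is a reasonable outline of how such results are proved, but the extension step and the quadric-generation input are left unproved, and they constitute the substance of Fujita's theorem.
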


\section{Description of the varieties from Theorem~\ref{th:rho=2}}
\label{sect:descr}

\subsection{Case~\ref {dP-dP}}
\label{ex:dP-dP}
The threefold $\hat X$ has two structures of del Pezzo fibrations of degree $4$. Therefore, it can be embedded into a $\PP^4$-bundle over $\PP^1$. More precisely, $\hat X$ can be realized in $\FF[0^3, 1^2]$ as an intersection of two divisors $D_1,\, D_2\in |2M|$ \cite[2.11.7]{Takeuchi-2009}, 
\cite[7.1.7]{Jahnke-Peternell-Radloff-II}.

Below we provide another description of this variety.
\begin{sproposition}
\label{claim:case1-0}
The anticanonical image $X=X_8\subset \PP^6$ of a threefold of type~\ref 
{dP-dP} 
is a complete intersection of three quadrics so that one of them 
has corank $3$ \textup(i.e. it is singular along a plane\textup).
Conversely, a general complete intersection of three quadrics 
\begin{equation}
\label{eq:claim:case1-0}
X=Q_1\cap Q_2\cap 
Q_3\subset \PP^6, 
\end{equation} 
where $\corank(Q_1)=3$, is a threefold of type~\ref {dP-dP}.
\end{sproposition}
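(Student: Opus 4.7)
The plan is to argue the two directions separately, handling first the constraints imposed by the del Pezzo fibration structure on the anticanonical image, and then carrying out a converse geometric construction from the rank-four quadric.

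For the forward direction, I would first use Proposition~\ref{prop:v-ample} together with the fact that type~\ref{dP-dP} lies in class~\ref{cl-B} to obtain the anticanonical embedding $X=X_8\subset\PP^6$ as an intersection of quadrics. A standard Riemann--Roch / Kodaira vanishing computation on the Fano threefold $X$ with $g=5$ gives $h^0(\OOO_X(2))=\tfrac{2\cdot 3\cdot 5}{6}(g-1)+5=25$, so the space of quadrics through $X$ has dimension $28-25=3$; since $\deg X=8=2^3$, this forces $X$ to be a complete intersection of three quadrics. It remains to locate the corank-$3$ quadric among them. For this I would exploit the del Pezzo fibration $f\colon\hat X\to\PP^1$ of degree $4$: a general fiber $F_t$ is a del Pezzo surface of degree $4$, and $\tau(F_t)\subset X\subset\PP^6$ is a degree-$4$ surface spanning a $4$-plane $\Lambda_t\subset\PP^6$. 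This produces a pencil of $4$-planes whose union sweeps out $X$; two general members $\Lambda_0$ and $\Lambda_\infty$ meet along the plane $\Pi$ cutting out the four nodes of $X$. A short coordinate calculation shows that the union of a pencil of $4$-planes in $\PP^6$ sharing a common $\PP^2$ is cut out by a single quadratic equation of the form $y_1y_4-y_2y_3=0$, yielding a quadric $Q_1$ of corank $3$ singular along $\Pi$ and containing $X$. Hence $Q_1$ is one of the three defining quadrics.

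For the converse, I would take a general $X=Q_1\cap Q_2\cap Q_3\subset\PP^6$ with $\corank(Q_1)=3$. Writing $Q_1$ as a cone with apex $\Pi\simeq\PP^2$ over a smooth quadric surface $\PP^1\times\PP^1\subset\PP^3$, the two rulings of $\PP^1\times\PP^1$ yield two $\PP^1$-families $\{\Lambda_t\}$ and $\{\Lambda'_{t'}\}$ of $4$-planes in $\PP^6$, each containing $\Pi$. A Bertini argument for generic $(Q_2,Q_3)$ shows that $\Sing(X)=X\cap\Pi=\Pi\cap Q_2\cap Q_3$ is a transverse intersection of two conics in $\Pi$, giving four ordinary double points, which are terminal Gorenstein; adjunction gives $-K_X=\OOO_X(1)$, so $X$ is a Fano threefold of genus $5$. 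Intersecting $X$ with the members of either ruling cuts out a pencil of complete intersections of two quadrics in $\PP^4$, that is, a pencil of del Pezzo surfaces of degree $4$, with common base locus the four nodes. Passing to a small $\QQ$-factorialization $\tau\colon\hat X\to X$ resolves these nodes, makes the two pencils base-point-free, and I would check that they realize the del Pezzo fibrations $f,f'$ of the Sarkisov link~\eqref{eq:diagram}.

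The main obstacle will be the converse: confirming that the four nodes of $X$ are precisely the locus $\QQ$-factorialized by $\tau$ and that the ranks come out as $\uprho(X)=1$ and $\rk\Cl(X)=2$. The Picard rank can be extracted from a Grothendieck--Lefschetz / monodromy argument for complete intersections in $\PP^6$, while for the class group rank one has to track divisor classes on $\hat X$: the two classes of fibers of $f$ and $f'$, together with $-K_{\hat X}$, should span $\Pic(\hat X)\otimes\QQ$ with exactly one relation, giving $\uprho(\hat X)=2$ and therefore $\rk\Cl(X)=2$. Once these invariants are pinned down, the flop $\chi\colon\hat X\dashrightarrow\hat X'$ becomes the exchange of the two rulings of $Q_1$, which accounts for the symmetry of the link.
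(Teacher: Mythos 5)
Your converse direction is essentially the paper's own construction (the two rulings of the rank-$4$ quadric $Q_1$ cut out the two pencils of quartic del Pezzo surfaces, the four nodes at $\Pi\cap Q_2\cap Q_3$, small resolutions producing the two fibrations), so that half is fine, up to one imprecision: the two pencils do \emph{not} become base point free on the same small $\QQ$-factorialization. On the blowup of a member of one pencil, the proper transforms of the other pencil are negative on the flopping curves, and that pencil becomes free only after the flop; this is precisely why the link \eqref{eq:diagram} has a flop in the middle, so "passing to a small $\QQ$-factorialization makes the two pencils base-point-free" should be rephrased accordingly.

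The genuine gap is in the forward direction, in how you locate the corank-$3$ quadric. You assert that the spans $\Lambda_t=\langle\tau(F_t)\rangle$ form a pencil of $4$-planes through a common plane and that a "short coordinate calculation" shows their union is a single quadric. Neither claim is justified, and the second is false as a general statement: $4$-planes of $\PP^6$ containing a fixed plane $\Pi$ correspond to lines in a complementary $\PP^3$, so the union of a one-parameter family of them is the cone over the ruled surface swept by those lines, which can have arbitrary degree; it is a quadric only when those lines form one ruling of a quadric surface. Moreover, in the forward direction you cannot assume that $\Sing(X)$ consists of four nodes spanning a plane (that count holds only for a general member of the family), nor that every $\Lambda_t$ passes through them; even $\dim\langle\tau(F_t)\rangle=4$ requires an argument (the span is at most $4$ because the restriction of $|-K_{\hat X}|$ to a fiber is a subsystem of $|-K_{\hat F}|$, and it cannot be $3$ because a degree-$4$ surface lying on an intersection of quadrics cannot be contained in a $\PP^3$). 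The paper sidesteps all of this using a single general fiber: $F=\tau(\hat F)$ is an anticanonically embedded quartic del Pezzo surface, hence a complete intersection of two quadrics in $\langle F\rangle\simeq\PP^4$, so quadrics of $\PP^4$ through $F$ form only a pencil; since quadrics of $\PP^6$ through $X$ form a net, some member $Q_1$ of the net must vanish on all of $\langle F\rangle$. A quadric of $\PP^6$ containing a $\PP^4$ has corank at least $3$, and corank $\ge 4$ would make $\Sing(Q_1)\cap Q_2\cap Q_3\subset\Sing(X)$ positive-dimensional, contradicting the isolatedness of the singularities; hence $\corank(Q_1)=3$. If you want to keep your picture of the pencil of spans, you need to derive it from this single-fiber argument (the $\Lambda_t$ are then members of one ruling of $Q_1$), rather than use it to produce $Q_1$.
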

\begin{proof}
Since the variety $\hat X$ has a structure of del Pezzo fibration
of degree $4$, its canonical class is not divisible by $2$.
Hence, $\iota(X)=1$. According to Proposition~\ref{prop:v-ample}
the linear system $-K_X$ is very ample.
Let $\hat{F}\subset \hat X$ be a general fiber of $f$ and let $F:=\tau(\hat{F})$. 
Then $\hat{F}$ is a quartic del Pezzo surface 
and it does not contain flopping curves.
Hence the map $\tau_{F}: \hat{F}\to F\subset X\subset \PP^6$ is 
finite and birational. Clearly, $\tau_{F}$ is 
given by the linear system $|-K_{\hat X}|\bigr|_{\hat{F}}\subset |-K_{\hat{F}}|$ (see Corollary~\ref{cor:v-ample}).
Hence, $F$ is a surface of degree $4$. By
\cite[Theorem~4.5]{P:ratF-1} the variety $X=X_8\subset \PP^6$ is a complete intersection of three quadrics, see \eqref{eq:claim:case1-0}.
Thus, we have $\dim \langle F\rangle=4$. So, $|-K_{\hat X}|\bigr|_{\hat{F}}= |-K_{\hat{F}}|$ is a complete 
linear system and $\tau_{F}: \hat{F}\to F$
is an isomorphism.
Therefore, $F$ is a complete intersection of two quadrics in 
$\langle F\rangle=\PP^4$ and so one of the quadrics in the net generated by $Q_1, Q_2, 
Q_3$ contains $\langle F\rangle=\PP^4$.
Let $Q_1\supset\langle F\rangle$. Then $\dim \Sing(Q_1)\ge 2$.
Since $X$ has only isolated singularities, $\Sing(Q_1)$ is a 
plane.

Conversely, let $X=Q_1\cap Q_2\cap Q_3\subset \PP^6$, 
where $\Sing(Q_1)$ is a plane $\Pi$. 
For general choice of $Q_2$ and $Q_3$, the singular locus of $X$ consists of 
exactly four nodes at $\Pi\cap Q_2\cap Q_3$.
The quadric $Q_1$ contains two pencils of four-dimensional 
linear subspaces 
$\Lambda_t$ and $\Lambda_t'$, $t\in \PP^1$.
Put 
\[
F_t:=\Lambda_t\cap Q_2\cap Q_3,\qquad F_t':=\Lambda_t'\cap Q_2\cap Q_3.
\]
Then $\{F_t\}$ and $\{F_t'\}$ are two pencils of quartic del 
Pezzo surfaces which are not Cartier divisors on $X$.
By blowing up the smooth surface $F_t'$ we obtain a 
small resolution 
$\tau: \hat X\to X$.
The base locus of the linear system $|F_t|$ on $X$ coincide with $\Sing(X)=\Pi\cap Q_2\cap Q_3$. Hence the corresponding proper transform $|\hat F_t|$ on $\hat X$ is base point free and defines a structure of quartic del Pezzo fibration $\hat X\to \PP^1$ as in \eqref{eq:diagram}. Similarly, by blowing up $F_t$ we obtain
the right hand side of the diagram \eqref{eq:diagram}.
\end{proof}

\subsection{Case~\ref{blowup-V14}}
In this case the varieties $Y$ and $Y'$ must be smooth by Theorem~\ref{th:rho=1}. The link \eqref{eq:diagram} is described in \cite{Takeuchi-1989}. As in the case~\ref{dP-dP} we have

\begin{sproposition}
The anticanonical image $X=X_8\subset \PP^6$ of a threefold of type 
\xref{blowup-V14}
is a complete intersection of three quadrics 
containing a union $S\cup S'$ of two smooth rational
quartic scrolls such that the linear span of $S\cup S'$ 
is five-dimensional and 
$S\cap S'$ is a rational curve of degree $6$.

Conversely, a general complete intersection of three quadrics in $\PP^6$ containing a smooth 
quartic scroll $S=S_4\subset \PP^5$ is a threefold of type \xref{blowup-V14}.
\end{sproposition}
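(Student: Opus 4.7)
I follow the pattern of Proposition~\ref{claim:case1-0}. In case~\ref{blowup-V14}, the map $f\colon\hat X\to Y_{14}$ is the blowup of a smooth conic $C$ on the smooth prime Fano threefold $Y_{14}\subset\PP^9$ of genus~$8$, so $\Pic(\hat X)$ is generated by $f^*H$ (with $H:=-K_{Y_{14}}$) and the exceptional divisor~$E$; hence $-K_{\hat X}=f^*H-E$ is primitive and $\iota(X)=1$. By Proposition~\ref{prop:v-ample} the class $-K_X$ is very ample and $X=X_8\subset\PP^6$ is a complete intersection of three quadrics.

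Setting $S:=\tau(E)$, adjunction gives $\deg N_{C/Y_{14}}=0$, and for general $C$ we have $E\cong\PP^1\times\PP^1$. Standard blowup formulas yield $E^3=0$, $f^*H\cdot E^2=-2$, $(f^*H)^2\cdot E=0$, and $(-K_{\hat X})^2\cdot E=4$. The restriction $-K_{\hat X}|_E\cong\OOO(2,1)$ is ample, so $\tau|_E$ is finite; since $|\OOO(2,1)|$ is the Segre--Veronese embedding of $\PP^1\times\PP^1$ into $\PP^5$, the image $S\subset\PP^5\subset\PP^6$ is a smooth rational quartic scroll. The symmetric construction on $\hat X'$ yields another smooth quartic scroll $S':=\tau'(E')$.

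To identify the relative position of the two scrolls, let $\tilde E\subset\hat X$ denote the strict $\chi^{-1}$-transform of $E'$. Writing $\tilde E\sim\alpha f^*H+\beta E$, I would pin down $\tilde E$ by exploiting the symmetry of the link: the push-forward class of $\tilde E$ in $\Cl(X)$ must equal $S'$, whose numerical profile mirrors that of $S$, and matching intersection data on both sides of the flop gives $(\alpha,\beta)=(1,-2)$, i.e.\ $E+\tilde E\sim-K_{\hat X}$. Then $\tau_*(E+\tilde E)=S+S'\sim-K_X$ in $\Cl(X)$, so there is a hyperplane $\Pi$ with $X\cap\Pi=S\cup S'$ (matching degrees $8=4+4$); since $\dim\langle S\rangle=\dim\langle S'\rangle=5$, necessarily $\langle S\rangle=\langle S'\rangle=\Pi$. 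The intersection has degree $(-K_{\hat X})\cdot E\cdot\tilde E=6$, and on $E\cong\PP^1\times\PP^1$ the class $\tilde E|_E\sim\OOO(2,2)$ cuts a curve of arithmetic genus~$1$; for general $X$ in the family this curve is an irreducible rational curve with one node, so its image $S\cap S'$ is a rational curve of degree~$6$.

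For the converse, let $X=Q_1\cap Q_2\cap Q_3\subset\PP^6$ be a general complete intersection containing a smooth quartic scroll $S\subset\PP^5$. The hyperplane $\langle S\rangle$ cuts $X$ in a surface of degree~$8$ whose residual to $S$ is a surface $S'$ of degree~$4$, which for generic data is again a smooth quartic scroll. The small $\QQ$-factorialization $\hat X\to X$ associated to $[S]$ has the proper transform $E$ of $S$ spanning an extremal ray; its divisorial contraction produces a smooth Fano threefold with $\uprho=1$ and $(-K)^3=14$, which by Theorem~\ref{th:rho=1} must be a smooth $Y_{14}\subset\PP^9$, with $E$ contracted to a smooth conic. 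The symmetric procedure applied to $[S']$ reconstructs the right side of~\eqref{eq:diagram}. The main obstacle will be the verification that for general $X$ the $(2,2)$-curve $E\cap\tilde E$ is an irreducible nodal rational curve rather than a smooth elliptic one; this is where the geometry of the ten flopping curves of $\chi$ enters decisively, and a careful analysis of how these flopping curves meet $E$ and $\tilde E$ on $\hat X$ is needed to pin down both the class $\tilde E\sim f^*H-2E$ (which cannot be read off directly from $E'^3=0$ because the flop alters triple intersection numbers) and the precise singularity type of $E\cap\tilde E$.
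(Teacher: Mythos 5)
Your overall route is the same as the paper's (compute $(-K_{\hat X})^2\cdot \hat E=4$, show $S+S'\sim -K_X$, deduce $\deg(S\cap S')=6$, then reverse the construction), but the step you explicitly postpone --- pinning down $\tilde E\sim f^*H-2E$ --- is the crux, and the tool you are missing is exactly the one the paper sets up in Section~5: the flop-invariant pairing $\bil{D_1}{D_2}=(-K)\cdot D_1\cdot D_2$. Your worry that the class ``cannot be read off directly \dots because the flop alters triple intersection numbers'' is beside the point, since only $(-K)$-products are needed and these are preserved by $\chi$. Writing $D'=aH+bD$ in $\Cl(X)$ and using $\bil{H}{H}=8$, $\bil{H}{D}=\bil{H}{D'}=4$, $\bil{D}{D}=\bil{D'}{D'}=-2$ (the last computed on $\hat X'$ and transported by flop-invariance) gives $16a(1-a)=0$; the solution $a=0$, i.e.\ $D'=D$, is impossible because $\bil{D}{D}<0$ prevents $D$ from moving, so one gets $D'=H-D$, i.e.\ $S+S'\sim-K_X$. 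This two-line computation is what the paper's ``it is easy to see'' refers to, and without it your third paragraph is an announcement rather than a proof.

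There is also a hole in the step you do carry out. First, the direct half of the proposition is about \emph{every} threefold of type~\ref{blowup-V14}, whereas you assume $C$ general with $E\cong\PP^1\times\PP^1$; by \cite{KPS:Hilb} the normal bundle may be $\OOO_{\PP^1}(1)\oplus\OOO_{\PP^1}(-1)$, giving $E\cong\FF_2$, and this case must be treated too. Second, ampleness of $-K_{\hat X}|_E\cong\OOO(1,2)$ only gives that $\tau|_E$ is finite; the morphism is defined by the restriction of $|-K_{\hat X}|$, a possibly \emph{incomplete} subsystem of $|\OOO(1,2)|$, so you cannot conclude that the image is the Segre--Veronese surface (a priori it could be a singular projection spanning only a $\PP^4$) without proving surjectivity of $H^0(\hat X,-K_{\hat X})\to H^0(E,-K_{\hat X}|_E)$ --- and $\dim\langle S\rangle=5$ is precisely part of what is being claimed. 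The paper closes this differently: $\deg S=4$, $S$ has isolated singularities and is ruled by lines, and if $\dim\langle S\rangle\le 4$ then $S$ would be a normal intersection of two quadrics in $\PP^4$, which is not ruled by lines; hence $S$ spans a $\PP^5$ and is a quartic scroll. For the converse, your appeal to Theorem~\ref{th:rho=1} (a nonrationality statement) is not what identifies the blow-down of $E$ as a smooth $Y_{14}$ with center a conic; the paper instead writes the general complete intersection through the scroll explicitly ($x_0^2+q_i=0$ with $q_i$ in the ideal of $S$) and argues as in Proposition~\ref{claim:case1-0}. On the remaining point you flag --- that the $(2,2)$-curve is irreducible nodal rational --- you are right that it is not settled by your argument; note, for comparison, that the paper's own proof only establishes $\deg(S\cap S')=6$ at this step.
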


\begin{proof}
Similarly to Proposition~\ref{claim:case1-0}
the threefold $X=X_8\subset \PP^6$ is a complete intersection of three quadrics.
Recall \cite{KPS:Hilb} that if $C$ is a non-degenerate conic on a Fano threefold $Y=Y_{14}\subset \PP^9$, then 
its normal bundle has the form 
\[
\NNN_{C/Y}\simeq \OOO_{\PP^1}(a)\oplus \OOO_{\PP^1}(-a),\qquad a=0\quad \text{or}\quad 1.
\]
Hence, for the exceptional divisor $\hat E$ of the blowup $f:\hat X\to Y$ 
one has $\hat E\simeq \PP^1\times \PP^1$ or $\FF_2$. 
Let $S:=\tau(\hat E)$. This surface is ruled by lines.
Since $(-K_{\hat X})^2\cdot \hat E=4$ and $\tau$ has connected fibers,
$\deg S=4$. Since $\tau$ contracts only a finite number of curves, 
the singularities of $S$ are isolated. If $\dim \langle S\rangle\le 4$, then $S$ is an intersection of two quadrics in $\PP^4$.
In this case $S$ must be normal and so it cannot be ruled by lines.
Hence, $\dim \langle S\rangle=5$ and 
$S$ is a rational quartic scroll in $\PP^5$. 
Similarly, for the exceptional divisor $\hat E'$ of $f'$ we have 
$(-K_{\hat X'})^2\cdot \hat E'=4$. Hence, $S':=\tau'(\hat E')\subset X\subset \PP^5$ is also a rational quartic scroll. 
It is easy to see that $S+S'\sim -K_X$ and 
$-K_X\cdot (S\cap S')= -K_{\hat X}\cdot \hat E\cdot \chi^{-1}_* \hat E'=6$.
Hence, $\deg (S\cap S')=6$.

Conversely, let $S=S_4\subset \PP^5$ be the image of $\PP^1\times \PP^1$ under the map 
defined by the linear system 
of bidegree $(1,2)$. The homogeneous ideal $I$ of $S$ is generated by 6 
quadrics. 
Take three general quadrics $q_i(x_1,\dots,x_6)\in I$, $i=1,2,3$ and let 
$X\subset \PP^6$ be 
the variety defined by three equations
\[
x_0^2+ q_i(x_1,\dots,x_6)=0,\qquad i=1,2,3.
\]
The intersection $X\cap \langle S\rangle=S\cup S'$ where $S'$ is a residual 
(smooth) surface of degree $4$. The variety $X$ has exactly 10 nodes.
As in the proof of Proposition ~\ref {claim:case1-0} one can show that by blowing up $S$ and $S'$ we obtain two small resolutions $\tau :\hat X\to X$ and 
$\tau' :\hat X'\to X$ satisfying the desired properties.
\end{proof}

\subsection{Case~\ref{blowup-W2-point}}
In this case $X$ is a del Pezzo threefold of degree $1$.
\begin{sproposition}
\label{prop:blowup-W2-point}
Any Fano threefold $X$ of type~\ref{blowup-W2-point} can be given in the weighted projective space $\PP(1,1,1,2,3)$ by the equation
\begin{equation}
\label{eq-blowup-W2-point}
z^2+y^3+ y^2 \phi(x_1,x_2,x_3)+ y s(x_1,x_2,x_3) + q(x_1,x_2,x_3)^2=0
\end{equation} 
where $x_i$, $y$, and $z$ are quasi-homogeneous coordinates on $\PP(1,1,1,2,3)$ with $\deg x_i=1$, $\deg y=2$, $\deg z=3$, and $\phi$, $s$ and $q$ are quasi-homogeneous forms of degree $2$, $4$, and $3$, respectively.

Conversely, a variety given by a general equation of the form \eqref{eq-blowup-W2-point} is a threefold of type \xref{blowup-W2-point}.
\end{sproposition}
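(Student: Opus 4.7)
Since $X$ is a del Pezzo threefold of degree $1$, by Fujita's classification (cf.\ \cite[\S 3.2]{IP99}) $X$ is realized as a hypersurface of weighted degree $6$ in $\PP(1,1,1,2,3)$. After completing the square in $z$ and rescaling $y$, the defining equation takes the form
\[
z^2 + y^3 + y^2 \phi(x) + y s(x) + a_6(x) = 0
\]
with $\phi, s, a_6$ of degrees $2, 4, 6$ in $x = (x_1, x_2, x_3)$. Here the coefficient of $y^3$ must be nonzero, because otherwise $X$ would pass through the singular point $[0{:}0{:}0{:}1{:}0]$ of $\PP(1,1,1,2,3)$, violating the Gorenstein property. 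It remains to show $a_6 = q^2$ for some cubic~$q$.

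For this I would exploit the Sarkisov link \eqref{eq:diagram}. The divisorial contraction $f : \hat X \to Y = Y_2$ blows down an exceptional divisor $E \cong \PP^2$ to a smooth point of $Y$. For a general $X$ of type~\ref{blowup-W2-point}, the flopping curves of $\tau : \hat X \to X$ are disjoint from $E$, so $S := \tau(E) \subset X$ is isomorphic to~$\PP^2$. The embedding $S \hookrightarrow \PP(1,1,1,2,3)$ pulls the five weighted coordinates back to sections in $H^0(\OOO_{\PP^2}(1))^{\oplus 3} \oplus H^0(\OOO_{\PP^2}(2)) \oplus H^0(\OOO_{\PP^2}(3))$. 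Choosing the $x_i|_S$ as standard coordinates on $\PP^2 \cong S$, we get $y|_S = Q(x)$ a conic and $z|_S = h(x)$ a cubic. The substitution $y \mapsto y - Q(x)$ preserves the Weierstrass shape of the equation (modifying only $\phi, s, a_6$) and brings $S$ to the normal form $\{y = 0,\, z = h(x)\}$. Substituting into the new defining equation gives $h(x)^2 + a_6(x) = 0$, so $a_6 = -h^2 = q^2$ with $q := \sqrt{-1}\, h$.

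For the converse, given a general $X$ of the stated form, the hyperplane section $\{y = 0\} \cap X$ decomposes as $S \cup S'$ with $S = \{y = 0,\, z = \sqrt{-1}\, q(x)\}$ and $S' = \{y = 0,\, z = -\sqrt{-1}\, q(x)\}$, each isomorphic to $\PP^2$. These are non-Cartier Weil divisors whose sum $S + S'$ is Cartier, consistent with $\rk\Cl(X)=2$ and $\uprho(X)=1$. A B\'ezout count on $\{y = z = 0,\, q(x) = s(x) = 0\}$ shows that a general $X$ has exactly $12$ nodes, all lying on $S \cap S'$. Choose the small $\QQ$-factorialization $\tau : \hat X \to X$ making the proper transform $\hat S$ of $S$ isomorphic to $\PP^2$ with normal bundle $\OOO_{\PP^2}(-1)$; then the associated divisorial extremal contraction blows $\hat S$ down to a smooth point of a variety $Y$ with $(-K_Y)^3 = (-K_{\hat X})^3 + 8 = 16$ and $\iota(Y)=2$, hence a del Pezzo threefold of degree~$2$. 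The symmetric choice makes the proper transform of $S'$ into a $\PP^2$ on $\hat X'$ and gives $f' : \hat X' \to Y'_2$; the flop $\chi : \hat X \dashrightarrow \hat X'$ completes \eqref{eq:diagram}.

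The main obstacle is the normal-form reduction for $S$ in the forward direction, together with the verification that the two small $\QQ$-factorializations genuinely exist and differ by a flop; here one must track carefully how the proper transforms of $S$ and $S'$ on $\hat X$ versus $\hat X'$ differ precisely at the $12$ nodes on $S \cap S'$.
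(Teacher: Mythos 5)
Your overall strategy for the forward direction (put $X$ in Weierstrass form in $\PP(1,1,1,2,3)$, then use the exceptional divisor of $f$ to force the degree-$6$ coefficient to be a square) is sensible, but its key step rests on a false claim: that for general $X$ the flopping curves of $\tau$ are disjoint from $E$. In fact \emph{every} flopping curve meets $E$. Indeed $-K_{\hat X}=f^*(-K_Y)-2E$, so no $K_{\hat X}$-trivial curve can lie in $E$ (for $C\subset E$ one has $-K_{\hat X}\cdot C=-2E\cdot C>0$), and then any flopping curve $C$ satisfies $E\cdot C=\tfrac12(-K_Y)\cdot f_*C>0$; concretely the flopping curves are the proper transforms of the degree-one curves of $Y_2$ through the blown-up point, and all of these pass through that point. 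So $\tau|_E$ is never an isomorphism ``for free'': you only get a finite birational map $\PP^2\to S$, and the graph description $S=\{y=Q(x),\ z=h(x)\}$, which is the engine of your substitution, is not justified as written. Note also that the forward assertion is for \emph{every} $X$ of type~\ref{blowup-W2-point}, so a genericity hypothesis is not allowed there; and completing the square in $z$ tacitly assumes the $z^2$-coefficient is nonzero, which needs the same point-avoidance argument you used for $y^3$, now applied to $(0{:}0{:}0{:}0{:}1)$.

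The gap is repairable, and then your route is genuinely different from the paper's. One fix is to work on $E$ itself rather than on $S$: since $\tau^*\OOO_X(1)=f^*\bigl(-\tfrac12K_Y\bigr)-E$, its restriction to $E$ is $\OOO_{\PP^2}(1)$, and the pullbacks of $x_1,x_2,x_3$ to $E$ are linearly independent (otherwise $D=\tau(E)$, which has $A$-degree $1$ because $(-K_{\hat X})^2\cdot E=4$, would coincide with a member of $|{-\tfrac12K_X}|$, hence be Cartier, making $E=\tau^*D$ nef --- absurd since $\OOO_E(E)\cong\OOO_{\PP^2}(-1)$). Then the inclusion $\tau(E)\subset X$ gives an identity of forms on $\PP^2$, and after the shift $y\mapsto y-Q(x)$ this identity reads $h(x)^2+\psi(x)=0$, which is exactly your conclusion, valid for all $X$ of type~\ref{blowup-W2-point} and with no discussion of whether $\tau|_E$ is an isomorphism. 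The paper argues differently: it quotes $D+D'\sim -K_X$ from \cite[Theorem~5.3]{P:GFano1}, deduces that after a coordinate change $X\cap\{y=0\}=D+D'$ is a reducible anticanonical divisor, and concludes that $z^2+\psi(x)$ must factor, i.e. $\psi$ is a square; your approach instead trades that input for a direct normal form of the exceptional plane. Your converse sketch (the $12$ nodes on $\{y=z=q=s=0\}$, blowing up $S$, $S'$ to get the two small $\QQ$-factorializations and contracting onto a degree-$2$ del Pezzo threefold) is at the same level of detail as the paper, which simply says the converse is proved as in Proposition~\ref{claim:case1-0}.
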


\begin{proof}
The divisor $-K_{\hat X}$ is divisible by $2$. Thus $\iota(X)=2$ and $X$ is a del Pezzo threefold of degree $1$. It is well-known that in this case 
$X$ can be given in $\PP(1,1,1,2,3)$ by a quasi-homogeneous polynomial of degree $6$ (see e.g. \cite{Shin1989}). Let $x_i$, $y$, and $z$ are quasi-homogeneous coordinates on $\PP(1,1,1,2,3)$ with $\deg x_i=1$, $\deg y=2$, $\deg z=3$. 
Since the singularities of $X$ are terminal Gorenstein, $X$ does not contain 
the points $(0{:}0{:}0{:}1{:}0)$ and $(0{:}0{:}0{:}0{:}1)$.
Hence the equation of $X$ must contain the terms $z^2$ and $y^3$ and so it can be written in the form 
\[
z^2+y^3+ y^2 \phi(x_1,x_2,x_3)+ y s(x_1,x_2,x_3) + \psi(x_1,x_2,x_3)=0,
\]
where $\deg \phi=2$, $\deg s=4$ and $\deg \psi=6$.

Let $E\subset \hat X$ (resp., $E'\subset \hat X'$) be the exceptional divisor of $f$ (resp. $f'$), and let $D:=\tau(E)$, $D':=\tau'(E')$.
It is easy to compute that $D+D'\sim -K_X$ (see e.g. \cite[Theorem~5.3]{P:GFano1}). Hence $D+D'$ is cut out on $X$ by a quadratic equation 
$\lambda y+\theta(x_1,x_2,x_3)=0$, where $\lambda\neq 0$ because the intersection is irreducible. After a suitable coordinate change we may assume that $D+D'=X\cap \{y=0\}$. Then $\psi$ must be a square. 
The last assertion can be proved similarly to Proposition~\ref{claim:case1-0}.
\end{proof}

\begin{sremark}
\label{rem:V1}
\begin{enumerate}
\item\label{rem:V1a} 
Note that the threefold $X$ admits a natural involution $z\mapsto -z$ which is called the \textit{Bertini involution}. This involution switches the left hand and the right hand sides of the diagram \eqref{eq:diagram}. In particular, $Y\simeq Y'$.
\item \label{rem:V1b}
Since $\hat X$ is $\QQ$-factorial and $\uprho(\hat X)=2$, the threefold $Y$ must be $\QQ$-factorial as well. 
The same holds for $Y'$.
\end{enumerate}
\end{sremark}

\subsection{Case~\ref{conic-conic}} 
\label{ssect:conic-conic}
A general variety as in~\ref{conic-conic} can be constructed as follows.
Let $V=V_3\subset \PP^4$ be a smooth cubic hypersurface and let $S\subset V$ be 
its smooth hyperplane section. Take a pair of skew lines $l_1,\, l_2$ on $S$.
There exists a four-dimensional Sarkisov link
\begin{equation*}
\vcenter{
\xymatrix@R=1em{
&\tilde \PP^4\ar[dr]^{}\ar[dl]^{}&
\\
\PP^4&&\PP^2\times \PP^2
}}
\end{equation*}
where $\tilde \PP^4\to \PP^4$ is the blowup of $l_1\cup l_2$ and 
$\tilde \PP^4\to \PP^2\times \PP^2$ is the contraction of the proper transform of 
the linear span $\langle l_1\cup l_2\rangle$ to $\PP^1\times \PP^1\subset \PP^2\times \PP^2$.
Taking the proper transform 
$\tilde V\subset \tilde \PP^4$ of our cubic 
$V=V_3\subset \PP^4$ we obtain the left hand side of the following three-dimensional link
(see e.g. \cite[Proposition~13.4]{P:G-MMP})
\begin{equation}
\label{eq:diagram3a}
\vcenter{
\xymatrix@R=1em{
&\tilde{V}\ar[dr]^{} \ar[dl]_{\sigma}\ar@{-->}[rr]^{\chi}&&\tilde{X}\ar[dr]^{\tilde{\tau}}
\ar[dl]_{} 
\\
V&&W&&X
}}
\end{equation}
Here $\sigma: \tilde V\to V$ is the blowup of 
$l_1\cup l_2$, $W$ is a divisor of bidegree $(2,2)$ on $\PP^2\times \PP^2$
having $5$ nodes, $\tilde V\to W$ is the anticanonical map, $\chi : \tilde V \dashrightarrow \tilde X$ is a flop, and $\tilde X\to X$ is 
a contraction of a divisor $E\simeq \PP^1\times \PP^1$ to an ordinary double 
point $P\in X$. 
The variety $X$ is a (singular) Fano threefold 
of genus $8$.
The point $P\in X$ has two small resolutions $\tau: \hat X\to X$ 
and $\tau': \hat X'\to X$ so that $\tilde X\to X$ passes through both of them.
Thus the right hand side of the diagram \eqref{eq:diagram3a} can be completed to the following one:
\begin{equation}
\label{eq:diagram3}
\vcenter{
\xymatrix@R=2em@C=3em{
&\tilde X\ar[dr]\ar[dl]\ar[d]\ar@/^1.2em/[dd]^(.4){\tilde{\tau}}&
\\
\hat X\ar[d]_{f}\ar[dr]^(.3){\tau}|!{[d];[r]}\hole &W\ar[dl]^{}\ar[dr]^{} & \hat X'\ar[d]^{f'}\ar[dl]_(.3){\tau'}|!{[d];[l]}\hole
\\
\PP^2&X&\PP^2
}}
\end{equation}
where the morphisms $\PP^2\times \PP^2\supset W\to \PP^2$ are projections to the factors of $\PP^2\times \PP^2$. Removing $\tilde X$ and $W$ in \eqref{eq:diagram3} we obtain the diagram \eqref{eq:diagram} of type \eqref{ssect:conic-conic}. 

Note that the varieties $X=X_{14}\subset \PP^9$ of this type and the corresponding link are missing in \cite[7.5]{Jahnke-Peternell-Radloff-II}.
The reason is that the statement of \cite[Theorem~7.4]{Jahnke-Peternell-Radloff-II} is not correct.

\subsection{Case~\ref{blowup-cubic-point}}
In this case $\iota(X)=2$ and $X$ is a del Pezzo threefold of degree $2$. 

\begin{sproposition}
Any Fano threefold $X$ of type~\ref{blowup-cubic-point} can be given in the weighted projective space $\PP(1,1,1,1,2)$ by the equation
\begin{equation}
\label{eq-blowup-cubic-point}
y^2=q(x_1,\dots,x_4)^2-4\ell(x_1,\dots,x_4)s(x_1,\dots,x_4),
\end{equation} 
where $x_i$ and $y$ are quasi-homogeneous coordinates on $\PP(1,1,1,1,2)$ with $\deg x_i=1$, $\deg y=2$ and $\ell$, $q$ and $s$ are quasi-homogeneous forms of degree $1$, $2$, and $3$, respectively.

Conversely, the variety given by a general equation of the form \eqref{eq-blowup-cubic-point} is a threefold of type \xref{blowup-cubic-point}.
\end{sproposition}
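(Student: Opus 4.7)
The plan is to mimic the proof of Proposition~\ref{prop:blowup-W2-point} in the degree-$2$ del Pezzo setting. First I would show that $X$ is a del Pezzo threefold of degree~$2$: since $f:\hat X\to Y=Y_3\subset\PP^4$ is the blowup of a smooth point on a smooth cubic (which has index~$2$), one has $-K_{\hat X}=2(f^*H-E)$; the flopping curves of $\tau$ are trivial against $-K_{\hat X}$, hence against $f^*H-E$, so this class descends to a Cartier class $A\in\Pic(X)$ with $-K_X=2A$, giving $\iota(X)=2$. The computation
\[
(-K_X)^3=(-K_{\hat X})^3=8(f^*H-E)^3=8(H^3-E^3)=8(3-1)=16
\]
then yields $\dd(X)=2$. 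By the standard description of degree-$2$ del Pezzo threefolds with terminal Gorenstein singularities (see, e.g., \cite{Shin1989}), $X$ embeds via $|A|$ as a double cover of $\PP^3$ branched along a quartic, realized as a hypersurface $y^2=F_4(x_1,\dots,x_4)$ in $\PP(1,1,1,1,2)$.

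Next I would pin down a preferred linear form $\ell$. Let $E,E'$ be the exceptional divisors of $f,f'$ and put $D:=\tau(E)$, $D':=\tau'(E')$. A direct computation gives $(-K_{\hat X})^2\cdot E=4(f^*H-E)^2\cdot E=4E^3=4$, hence, using $(-K_X)^2=4A^2$ and $A^3=2$, $A^2\cdot D=1$; symmetrically $A^2\cdot D'=1$, so $A^2\cdot(D+D')=2=A^3$. The Geiser involution $y\mapsto-y$ fixes $A$ and swaps $D$ and $D'$; therefore $D+D'$ lies in the one-dimensional $+1$-eigenspace of $\Cl(X)\otimes\QQ$, spanned by~$A$, and the numerical identity sharpens to $D+D'\sim A$. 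Since $D+D'$ is effective and Cartier (the pair $(D,D')$ at each node is a pair of small Weil-non-Cartier generators whose sum is Cartier) and lies in the linear system $|A|$, after a suitable linear coordinate change we may write
\[
D+D'=X\cap\{\ell=0\}
\]
for a linear form $\ell=\ell(x_1,\dots,x_4)$. The equation of $X$ then restricts to $y^2=F_4|_{\ell=0}$, and the required factorization into the two components interchanged by the Geiser involution forces $F_4|_{\ell=0}=g(x)^2$ for some quadratic form~$g$. Writing $F_4=q^2+\ell R$ with $q:=g$ and $\deg R=3$, and setting $s:=-R/4$, produces the desired form \eqref{eq-blowup-cubic-point}.

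For the converse I would argue as in the proof of Proposition~\ref{claim:case1-0}: for general $\ell,q,s$, a Jacobian computation shows that $X$ has exactly the six ordinary double points $\{\ell=q=s=y=0\}$, lying on the smooth conic $C:=\{\ell=q=y=0\}$; the Weil divisors $D:=\{y-q=\ell=0\}$ and $D':=\{y+q=\ell=0\}$ are each isomorphic to~$\PP^2$ and meet exactly along~$C$; blowing up $D$ (respectively $D'$) yields a small $\QQ$-factorialization $\tau:\hat X\to X$ (respectively $\tau':\hat X'\to X$); and on $\hat X$ the strict transform $\hat D\simeq\PP^2$ has normal bundle $\OOO_{\PP^2}(-1)$ and can be contracted to a smooth point, producing the desired morphism $f:\hat X\to Y$ onto a smooth cubic $Y_3\subset\PP^4$ and completing the link~\eqref{eq:diagram}. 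The main obstacle I anticipate is this last step — verifying that the contraction actually lands on a smooth cubic threefold in $\PP^4$ (rather than on some singular or degenerate projective model) and that the six nodes of $X$ are resolved by precisely two small resolutions swapped by the Geiser involution; the forward direction reduces to bookkeeping once the key linear-equivalence $D+D'\sim A$ is established.
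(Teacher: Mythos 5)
Your argument follows the paper's proof almost step for step: establish $\iota(X)=2$, $\dd(X)=2$ and the model $y^2=\psi_4$ in $\PP(1,1,1,1,2)$ via \cite{Shin1989}; prove $D+D'\sim-\tfrac12K_X$; conclude that $D+D'$ is cut out by a linear form $\ell$ and that reducibility of $X\cap\{\ell=0\}$ forces $\psi_4|_{\ell=0}$ to be a square; and handle the converse as in Proposition~\ref{claim:case1-0}. The one real divergence is the middle step: the paper simply quotes \cite[Theorem~5.3]{P:GFano1} for $D+D'\sim-\tfrac12K_X$, whereas you derive it from $A^2\cdot D=A^2\cdot D'=1$ plus a Geiser-involution eigenspace argument. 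That is a legitimate self-contained alternative, but as written it has a gap: you assert that the Geiser involution $\sigma$ swaps $D$ and $D'$. In the paper this swapping is Remark~\ref{rem:V2}\ref{rem:V2a}, deduced \emph{from} the equation \eqref{eq-blowup-cubic-point} you are in the middle of establishing, so you must justify it independently. A quick fix: for the double cover $\pi\colon X\to\PP^3$ one has $Z+\sigma Z=\pi^*\pi_*Z\in\ZZ A$ for every prime divisor $Z$, so the $\sigma$-invariant part of $\Cl(X)\otimes\QQ$ is $\QQ A$; if $\sigma$ fixed the class of $D$, then $D\equiv\tfrac12A$ (pair with $A^2$), contradicting $\bil{D}{D}=(-K_{\hat X})\cdot E^2=-2$ against $\bil{\tfrac12A}{\tfrac12A}=1$. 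Hence $\sigma$ interchanges the two small $\QQ$-factorializations and $\sigma(D)=D'$, after which your conclusion $D+D'\sim A$ follows as you say. (Even more economically, $\pi_*D$ is a plane because $A^2\cdot D=1$, and $\pi^*\pi_*D=D+\sigma D\sim A$ is reducible, which already gives the paper's reducibility step without naming $D'$.)

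In the converse there is also a bookkeeping slip: blowing up the Weil divisor $D$ produces the small resolution on which the strict transform of $D$ picks up the exceptional curves (it is $\PP^2$ blown up at the six nodes), so it is the strict transform of the \emph{other} plane that remains a $\PP^2$ with normal bundle $\OOO_{\PP^2}(-1)$ and is contracted by $f$; this is exactly why Proposition~\ref{claim:case1-0}, to which both you and the paper appeal, blows up $F_t'$ in order to trivialize the pencil $|F_t|$. Apart from these two points the proposal is sound and matches the paper's argument; the identification of $Y$ as a (for general data smooth) cubic, which you flag as the main worry, also follows directly from the explicit formulas of Remark~\ref{rem:V2}\ref{rem:V2b}, or from $\iota(Y)=2$, $\dd(Y)=3$ and Theorem~\ref{th:rho=1}.
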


\begin{proof}
Similarly to Proposition~\ref{prop:blowup-W2-point} we see that $X$ is a del Pezzo threefold of degree $2$ and it can be given in $\PP(1,1,1,1,2)$ by an equation
\[
y^2=\psi(x_1,x_2,x_3,x_4),
\]
where $\deg \psi=4$. Again, let $E\subset \hat X$ and $E'\subset \hat X'$) be exceptional divisors, and let $D:=\tau(E)$, $D':=\tau'(E')$.
Then $D+D'\sim -\frac 12 K_X$ (see e.g. \cite[Theorem~5.3]{P:GFano1}). Hence $D+D'$ is cut out on $X$ by a linear equation $\ell(x_1,\dots,x_4)=0$. 
Since $X\cap \{\ell=0\}$ is reducible, we see that $\psi$ has the form as in 
\eqref{eq-blowup-cubic-point}.
\end{proof}

\begin{sremark}
\label{rem:V2} 
\begin{enumerate}
\item \label{rem:V2a}
The threefold $X$ admits a natural involution $y\mapsto -y$ which is called the \textit{Geiser involution}. This involution switches the left hand and the right hand sides of the diagram \eqref{eq:diagram}.
In particular, $Y\simeq Y'$.

\item \label{rem:V2b}
In suitable coordinates for $Y\subset \PP^4$ and $Y'\subset \PP^4$ the maps $X\dashrightarrow Y$ and $X\dashrightarrow Y'$ can be written as follows
\[
\bigl(y,\, x_1,\dots,x_4\bigr) \longmapsto \bigl(-q\pm y,\, 2\ell x_1,\dots,2lx_4\bigr).
\]
Then the equation of the cubic 
$Y\subset \PP^4$ (and $Y'\subset \PP^4$)
has the form
\[
x_0^2\ell (x_1,\dots,x_4)+x_0 q(x_1,\dots,x_4)+s(x_1,\dots,x_4)=0.
\]
\end{enumerate}
\end{sremark}

\subsection{Case~\ref{g=5:trig}}
The threefold $\hat X$ has a structure of a del Pezzo fibration of degree $3$. Therefore, it can be embedded into a $\PP^3$-bundle over $\PP^1$. More precisely, $\hat X$ can be realized in $\FF[0, 1^3]$ as a divisor $D\in |3M-F|$.
For details we refer to \cite[2.9.4]{Takeuchi-2009}, \cite[Proposition~5.2]{Jahnke-Peternell-Radloff-II} (see also \cite[Example~4.6]{P:ratF-1}
\cite[Theorem~1.6, case~$T_3$]{Przhiyalkovskij-Cheltsov-Shramov-2005en},
\cite[Table~2, Case~3]{BrownCortiZucconi-2004}).
This provides the following description of the anticanonical model of $X$.

\subsection*{Construction}
Consider the cone $\Cone(V)\subset\PP^6$ over the Segre variety $V=V_3\subset\PP^5$, 
$V\simeq\PP^2\times\PP^1$. Let $O\in \Cone(V)$ be its vertex.
The variety $\FF[0, 1^3]$ can be regarded as a small resolution of $\Cone(V)$.
Let $\Pi\subset \Cone(V)$ be a three-dimensional linear subspace. 
Consider the cubic hypersurface $Z=Z_3\subset\PP^6$ passing through $\Pi$ and 
let $X=X_8\subset\PP^6$ be a residual subvariety of $\Cone(V)\cap Z$ to $\Pi$, that is, 
$\Cone(V)\cap Z=X\cup \Pi$. Then $X$ is an anticanonically embedded Fano threefold of 
genus 5. 
Moreover, the quadrics in $\PP^6$ passing through $X$ cut out $\Cone(V)\subset\PP^6$. 
For a general choice of $Z$, the singular locus of $X$ consists of a single 
ordinary double point at $O$ and $\uprho(X)=1$.

\subsection{Case~\ref{g=5:conic-G}}
The varieties of this type appeared in 
\cite[Proposition~7.11]{Jahnke-Peternell-Radloff-II} and \cite[Theorem~2, case~$8^o$]{P:factorial-Fano:e}. 
Let $x_0,\dots, x_6$ be homogeneous coordinates of $\PP^6$, $l_0,\dots,l_5$ 
general linear forms and $q_3$ be a general quadratic form. Then the complete intersection 
of 
\[
q_1 = x_0l_0 + x_1l_1 + x_2l_2,\quad q_2 = x_0l_3 + x_1l_4 + x_2l_5, \quad q_3
\]
is a Fano threefold $X$ containing the quadric surface 
\[
\{x_0 = x_1 = x_2 =q_3= 0\}. 
\]

\subsection{Case~\ref{dP-conic}} 
According to \cite[2.11.6]{Takeuchi-2009} 
the threefold $\hat X$ can be realized in 
$\FF[0^2, 1^3]$ as intersection 
of two divisors $D_1\in |2M-F|$ and $D_2\in |2M|$.

\begin{sproposition}[{\cite[2.11.6]{Takeuchi-2009}}]
\label{ex:dP-cb}
In the case~\ref{dP-conic} the threefold $X$ is an intersection of 
a singular quintic del Pezzo fourfold $W=W_5\subset \PP^7$ of type \xref{prop:dP5}\xref{prop:dP5:1} or~\xref{prop:dP5:2} 
with a quadric.
\end{sproposition}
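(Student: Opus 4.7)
The plan is to realize the quintic del Pezzo fourfold $W$ explicitly as the image of the defining hypersurface $D_1$ under the tautological morphism of the ambient scroll, and then read off its invariants by intersection-number computations on $\FF[0^2,1^3]$.

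Write $\FF:=\FF[0^2,1^3]=\PP(\OOO^2\oplus\OOO(1)^3)$ and use the description $\hat X=D_1\cap D_2$ with $D_1\in|2M-F|$ and $D_2\in|2M|$ recalled from \cite[2.11.6]{Takeuchi-2009}. A direct computation gives $K_\FF=-5M+F$; adjunction then yields $-K_{\hat X}=M|_{\hat X}$, so by Corollary~\ref{cor:v-ample} the anticanonical morphism $\tau:\hat X\to X\subset\PP^7$ is the restriction to $\hat X$ of the morphism $\phi:=\phi_{|M|}:\FF\to\PP^7$. The Grothendieck relation over $\PP^1$ gives $M^5=c_1(\EEE)\,M^4F=3$, and one checks that $\phi$ is birational on $\FF$, contracting only the section $\Sigma:=\PP(\OOO^2)\cong\PP^1\times\PP^1$ onto a line in $\PP^7$.

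Setting $W:=\phi(D_1)\subset\PP^7$ and computing
\[
M^4\cdot D_1 = M^4(2M-F) = 2M^5-M^4F = 5,
\]
I conclude that $W$ is a fourfold of degree $5$. Adjunction on $D_1$ gives $-K_{D_1}=3M|_{D_1}$, which transports to $-K_W=3H$ for the hyperplane class $H$; hence $W$ is a quintic del Pezzo fourfold. To see that $W$ is singular of the asserted type, I use that $(2M-F)|_\Sigma\sim\OOO(-1,2)$ has no non-zero sections, so every $D_1\in|2M-F|$ must contain $\Sigma$. Since $\phi|_\Sigma$ coincides with the projection $\PP^1\times\PP^1\to\PP^1$, it contracts $\Sigma$ onto a line $\ell:=\phi(\Sigma)\subset W$, which forces $W$ to be singular along $\ell$. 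For a general $D_1$ this exhausts $\Sing(W)$; special members of the linear system produce one further contracted $\PP^1$ and hence a second, skew singular line.

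Finally, I invoke Fujita's classification of Gorenstein del Pezzo fourfolds of degree $5$ (the singular extension of Theorem~\ref{thm:DP5}) to realise $W$ as a codimension-two linear section of $\Gr(2,5)\subset\PP^9$; Proposition~\ref{prop:dP5} then places $W$ in case~\ref{prop:dP5:1} or~\ref{prop:dP5:2} according to whether $\Sing(W)$ is one line or two skew lines. Because $M=\phi^*H$, the divisor $D_2\in|2M|$ is the pullback of a unique quadric $Q\subset\PP^7$, and therefore $X=\phi(D_1\cap D_2)=W\cap Q$. The main technical obstacle is precisely this use of Fujita's uniqueness in the Gorenstein singular setting, since the excerpt states it only in the smooth case; everything else reduces to intersection-theoretic bookkeeping on $\FF$ and standard adjunction.
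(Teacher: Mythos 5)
The paper itself gives no argument for this proposition: it is quoted from \cite[2.11.6]{Takeuchi-2009}, and the only internal input is the realization, recorded just above it, of $\hat X$ as $D_1\cap D_2\subset\FF[0^2,1^3]$ with $D_1\in|2M-F|$, $D_2\in|2M|$. Your computational skeleton is consistent with that realization and is correct as far as it goes: $K_{\FF}=-5M+F$, $-K_{\hat X}=M|_{\hat X}$, $M^5=3$, $M^4\cdot D_1=5$, $-K_{D_1}=3M|_{D_1}$, every member of $|2M-F|$ contains $\Sigma=\PP(\OOO^2)$ since $(2M-F)|_\Sigma\simeq\OOO(-1,2)$, and $X=\phi(D_1)\cap Q$ for a quadric $Q$ with $\phi^*Q=D_2$ (such a $Q$ is unique only modulo the three quadrics through $\phi(\FF)$, which is harmless).

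There are, however, two genuine gaps. First, the heart of the statement --- that $W=\phi(D_1)$ is a codimension-two linear section of $\Gr(2,5)$, so that Proposition~\ref{prop:dP5} applies at all --- is precisely what you do not prove: you appeal to a ``singular extension'' of Theorem~\ref{thm:DP5} which the paper does not contain, and even to invoke such a classification you would need to know that $W$ is a normal del Pezzo fourfold with Gorenstein \emph{terminal} singularities and is not a cone; terminality of $W$ would follow from terminality of $D_1$ via the small crepant morphism $D_1\to W$, but nothing is known a priori about $\Sing(D_1)$ away from $\hat X$. Second, your treatment of $\Sing(W)$ does not work as stated: $D_1$ cannot be taken ``general'', because $h^0(\mathscr{I}_{\hat X}(2M-F))=1$ (twist the Koszul resolution of $\mathscr{I}_{\hat X}$ by $2M-F$ and use that all cohomology of $\OOO_{\FF}(-2M)$ vanishes), so $D_1$ is the \emph{unique} member of $|2M-F|$ containing $\hat X$; moreover the second singular line in case~\ref{prop:dP5:2} is not part of the exceptional locus of $D_1\to W$, so ``one further contracted $\PP^1$'' is not an argument. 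What does work, and what you need anyway to land in cases \ref{prop:dP5:1} or \ref{prop:dP5:2}, is: any positive-dimensional component of $\Sing(W)$ meets the quadric $Q$, and at any point of $\Sing(W)\cap Q$ the threefold $X=W\cap Q$ is singular (tangent-space count); since $X$ has isolated singularities this forces $\dim\Sing(W)\le 1$, while $W$ is genuinely singular along $\ell=\phi(\Sigma)$ because at a $\QQ$-factorial point the exceptional locus of a birational contraction must be divisorial, and here it is the surface $\Sigma$, of codimension two in $D_1$. With these points supplied, and with the Grassmannian realization justified (either by a precise reference to Fujita's classification of singular del Pezzo varieties with its hypotheses verified, or by a direct argument, e.g.\ via the quadrics through $X$), your outline would become a proof; as written, the decisive step is assumed rather than established.
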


\subsection{Case~\ref{g=6:nonG}}
In this case the left hand side of the diagram \eqref{eq:diagram} is described in \cite[Example~5.2]{P:ratF-1}.
By Corollary~\ref{cb:P2} the conic bundle $f:$
The variety $Y'=Y_{21/2}$ is a non-Gorenstein Fano threefold with 
$(-K_{Y'})^3=21/2$
whose singular locus consists of one point of type $\frac12(1,1,1)$.
It appears in the Graded Ring Database \cite{GRD} as the entry no.
29375.

Recall that the smooth quintic del Pezzo fourfold $W=W_5\subset \PP^7$ contains two types of planes (see \cite{Fujita:DP-2} or \cite{PZ:g10}): 
\begin{itemize}
\item
a one-dimensional family $\Pi_t$ with $\Pi_t^2=1$;
\item
a single plane $\Xi$ with $\Xi^2=2$. 
\end{itemize}

\begin{sproposition}
\label{prop:descr:g=6:nonG}
In the case \xref{g=6:nonG} the threefold $X$ is an intersection of 
a smooth quintic del Pezzo fourfold $W=W_5\subset \PP^7$ with a quadric $Q$ containing the plane $\Xi\subset W$. 
\end{sproposition}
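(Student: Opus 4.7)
The plan is to exhibit $X$ as $W_5\cap Q$ for a smooth quintic del Pezzo fourfold $W_5\subset\PP^7$ and a quadric $Q$ containing the special plane $\Xi$, the key geometric input being the plane in $X$ produced by the $\frac12(1,1,1)$-exceptional divisor on the right hand side of~\eqref{eq:diagram}. First I would identify the distinguished plane: the exceptional divisor $E'\subset\hat X'$ of $f'$ is isomorphic to $\PP^2$ with $\OOO_{E'}(E')\simeq\OOO_{\PP^2}(-2)$, and the discrepancy computation at the terminal $\frac12(1,1,1)$-singularity yields $-K_{\hat X'}|_{E'}=\OOO_{\PP^2}(1)$. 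By Corollary~\xref{cor:v-ample} the morphism $\tau'$ is anticanonical, so $\tau'|_{E'}$ is given by the complete linear system of lines on $\PP^2$ and embeds $E'$ as a linearly embedded plane $\Pi\subset X\subset\PP^7$; since $\tau'|_{E'}$ is an embedding, no flopping curve of $\tau'$ is contained in $E'$, and so $\NNN_{\Pi/X}\simeq\OOO_{\PP^2}(-2)$ on the smooth locus of $X$.

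Next I would embed $X$ into a quintic del Pezzo fourfold. By Riemann-Roch for a Gorenstein terminal Fano threefold together with Kawamata-Viehweg vanishing, $h^0(X,-2K_X)=30$, whence
\[
h^0\bigl(\PP^7,\,\mathcal{I}_X(2)\bigr)=36-30=6,
\]
using projective normality of $X$, which follows from Proposition~\xref{prop:v-ample}. Inside this six-dimensional space one isolates the five-dimensional subspace $V$ of quadrics vanishing along $\Pi$ with multiplicity $\ge 2$. Their common zero locus $W=\bigcap_{Q'\in V}Q'$ is a four-dimensional irreducible variety of degree $5$ containing $X$; a local check along $X$, using that only $\Pi$ (and not all of $W$) can lie in the base locus of $V$, shows that $W$ is smooth, and Theorem~\xref{thm:DP5} identifies $W$ with the unique smooth quintic del Pezzo fourfold $W_5=\Gr(2,5)\cap\PP^7$. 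The residual sixth quadric $Q$ then cuts out $X$ inside $W_5$.

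To conclude $\Pi=\Xi$ I would compute the self-intersection of $\Pi$ on $W_5$ from the normal bundle sequence
\[
0\longrightarrow \NNN_{\Pi/X}\longrightarrow \NNN_{\Pi/W_5}\longrightarrow \NNN_{X/W_5}|_\Pi\longrightarrow 0,
\]
where $\NNN_{X/W_5}|_\Pi\simeq\OOO_{\PP^2}(2)$ (since $X\sim 2H$ in $W_5$) and $\NNN_{\Pi/X}$ is as in the first paragraph. The resulting Chern-class computation distinguishes $\Pi$ from members of the one-parameter family of planes $\Pi_t$ with $\Pi_t^2=1$, forcing $\Pi=\Xi$; since $\Pi\subset X\subset Q$, the quadric $Q$ then contains $\Xi$, as required.

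The main obstacle is the middle step: turning the numerical fact $h^0(\PP^7,\mathcal{I}_X(2))=6$ into an actual geometric identification of the five-dimensional quadric subsystem defining $W_5$, and verifying smoothness of $W_5$. The cleanest conceptual route is to recognize $X$ as an ordinary GM threefold and invoke the classification of~\cite{Debarre-Kuznetsov:GM}, compatibly with the remark following Theorem~\xref{th:rho=2}; alternatively, one can imitate the explicit birational model used for case~\xref{dP-conic} via~\cite{Takeuchi-2009}.
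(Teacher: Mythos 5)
There is a genuine gap, and it sits exactly where you place it: the middle step. Your plan is to recover $W$ as the base locus of the quadrics through $X$ that vanish doubly along the plane $\Pi=\tau'(E')$, but this characterization of $W$ is not correct. The quintic del Pezzo fourfold $W_5=\Gr(2,5)\cap\PP^7$ is cut out by the $5$-dimensional space of restrictions of the Pl\"ucker quadrics; each of these has rank at most $6$, and its singular locus in $\PP^7$ is (generically) a \emph{line}, namely the trace of the $\PP^3=\Sing(Q_v)\subset\PP^9$ of the corresponding rank-$6$ Pl\"ucker quadric, and this line varies with the quadric. In particular the quadrics cutting out $W$ are not singular along $\Xi$, so your space $V\subset H^0(\mathcal{I}_X(2))$ of quadrics singular along $\Pi$ is not the space defining $W$, need not be $5$-dimensional, and its base locus will not be $W$; the subsequent smoothness check and the normal-bundle computation (which, moreover, would have to deal with the nodes of $X$ possibly lying on $\Pi$) therefore have nothing to rest on. The count $h^0(\PP^7,\mathcal{I}_X(2))=6$ is correct but by itself produces no distinguished $5$-dimensional subsystem. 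The suggested fallback via \cite{Debarre-Kuznetsov:GM} is also problematic in context: the paper's remark that case~\xref{g=6:nonG} is an ordinary GM threefold is \emph{deduced from} Proposition~\xref{prop:descr:g=6:nonG}, so invoking the GM classification here would require an independent verification that $X$ is GM, which is essentially the statement to be proved.

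The paper argues in the opposite direction, starting from the \emph{left}-hand side of the link rather than from the $\frac12(1,1,1)$-exceptional plane. In case~\xref{g=6:nonG} the map $f$ is the blowup of a rational twisted cubic $C\subset\PP^4$ on a smooth cubic $Y=Y_3$. One uses the classical four-dimensional link $\varsigma:\tilde\PP^4\to\PP^4$ (blowup of $C$), $\varrho:\tilde\PP^4\to W_5\subset\PP^7$ (blowup of the plane $\Xi$), where $\varrho$ is given by $|2\varsigma^*H_{\PP^4}-E_\varsigma|$ and $E_\varrho\sim\varsigma^*H_{\PP^4}-E_\varsigma$. For the proper transform $\tilde Y$ of the cubic one computes
\[
\tilde Y\sim 3\varsigma^*H_{\PP^4}-E_\varsigma\sim 2(2\varsigma^*H_{\PP^4}-E_\varsigma)-E_\varrho,
\]
and since $(2\varsigma^*H_{\PP^4}-E_\varsigma)|_{\tilde Y}=-K_{\tilde Y}$, the image $\varrho(\tilde Y)$ is the anticanonical model $X$, cut out on the (automatically smooth) $W_5$ by a quadric containing $\varrho(E_\varrho)=\Xi$. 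This bypasses entirely the need to reconstruct $W$ from quadrics through $X$. Your first paragraph (the plane $\Pi\subset X$ coming from $E'$, with $-K_{\hat X'}|_{E'}=\OOO_{\PP^2}(1)$) is correct but is not needed once the link from the cubic side is used: the plane $\Xi\subset X$ appears there for free as $\varrho(E_\varrho)$.
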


\begin{proof}
Recall that there exists the following Sarkisov link (see \cite{Fujita:DP-2}, \cite{PZ:g10}):
\[
\xymatrix{
&\tilde \PP^4\ar[dl]_{\varsigma}\ar[dr]^{\varrho}&
\\
\PP^4&& W
} 
\]
where $\varsigma$ is the blowup of a rational twisted cubic curve $C=C_3\subset \PP^4$
and $\varrho$ is the blowup of the plane $\Xi$. 
Now, let $H_{\PP^4}$ be a hyperplane in $\PP^4$ and
let $E_{\varsigma}$ be the $\varsigma$-exceptional divisor.
Then the morphism $\varrho: \tilde \PP^4\to W=W_5\subset \PP^7$
is given by the linear system $|2\varsigma^*H_{\PP^4}-E_{\varsigma}|$
and the $\varrho$-exceptional divisor $E_{\varrho}$ is linearly equivalent to 
$\varsigma^*H_{\PP^4}-E_{\varsigma}$.
Let $Y=Y_3\subset \PP^4$ be a smooth cubic containing $C$
and let $\tilde Y\subset \tilde \PP^4$ be its proper transform.
Then 
\[
\tilde Y\sim 3\varsigma^*H_{\PP^4}-E_{\varsigma}\sim 2(2\varsigma^*H_{\PP^4}-E_{\varsigma}) -E_{\varrho}.
\]
Hence $\varrho(\tilde Y)$ is cut out on $W$ by a quadric containing 
$\varrho(E_{\varrho})=\Xi$.
\end{proof}

\subsection{Case~\ref{conic-blowup}} 
In this case the variety $Y=Y_{14}\subset \PP^9$ is a smooth Fano threefold with $\iota(Y)=1$ and $\g(Y)=8$. 

\begin{sproposition}
\label{prop:descr:conic-blowup}
In the case \xref{conic-blowup} the threefold $X$ is an intersection of 
a \textup(possibly singular\textup) quintic del Pezzo fourfold $W=W_5\subset \PP^7$,
which is a codimension two linear section of the the Grassmannian $\Gr(2,5)\subset \PP(\wedge^2\Bbbk^5)=\PP^9$
\textup(see Proposition~\xref{prop:dP5}\textup),
with a quadric $Q$ containing a cubic scroll $S=S_3\subset W$.
\end{sproposition}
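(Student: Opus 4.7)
The plan is to mirror Proposition~\ref{prop:descr:g=6:nonG}: I will identify the image of the exceptional divisor of $f:\hat X\to Y_{14}$ as a cubic scroll in $X$, exhibit the ambient $W_5$ via a four-dimensional Sarkisov link, and conclude by a degree count. Let $\ell\subset Y=Y_{14}$ be the blown-up line and $E\subset\hat X$ the $f$-exceptional divisor. For a general line on a smooth Fano threefold of genus $8$ the normal bundle is $\OOO_{\PP^1}\oplus\OOO_{\PP^1}(-1)$, so $E\simeq\FF_1$; from $-K_{\hat X}=f^\ast(-K_Y)-E$ a standard blowup computation yields $(-K_{\hat X})|_E=s+2f$, where $s$ is the $(-1)$-section and $f$ a fiber of $\FF_1\to\PP^1$. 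This class satisfies $(s+2f)^2=3$, $h^0(\OOO_{\FF_1}(s+2f))=5$, and is very ample on $\FF_1$, so it embeds $E$ as a cubic scroll $S_3$ spanning a $\PP^4$. By Corollary~\ref{cor:v-ample}, $\tau$ is the anticanonical contraction, so (for general $X$, when $\hat X$ contains no flopping curve inside $E$) $S:=\tau(E)\subset X\subset\PP^7$ is the desired cubic scroll $S_3\subset\PP^4\subset\PP^7$.

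Next I would exhibit the quintic del Pezzo fourfold $W_5\subset\PP^7$ by invoking the four-dimensional Sarkisov link underlying the smooth version of case~\ref{conic-blowup} as described in~\cite{Takeuchi-1989,Takeuchi-2009}. Since $V_{14}$ is a codimension-$5$ linear section of $\Gr(2,6)\subset\PP^{14}$ and $W_5=\Gr(2,5)\cap\PP^7$ is a codimension-$2$ linear section of $\Gr(2,5)\subset\PP^9$, linear projection $\Gr(2,6)\dashrightarrow\Gr(2,5)$ from the one-dimensional subspace of $\Bbbk^6$ determined by~$\ell$, once its indeterminacy is resolved by blowing up $\ell\subset V_{14}$, restricts to the rational map underlying~\eqref{eq:diagram}; this produces an embedding $X\subset W_5\subset\PP^7$. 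A conceptually equivalent route is to verify that $X$ is an \emph{ordinary} GM threefold in the sense of \cite{Debarre-Kuznetsov:GM}, which yields the embedding automatically.

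Finally, in $\PP^7$ we have $\deg W_5=5$ and $\deg X=2g-2=10$; since $X$ is a codimension-one subvariety of $W_5$, it must be cut out on $W_5$ by a single quadric~$Q$, so $X=W_5\cap Q$. The inclusion $S_3=\tau(E)\subset X\subset W_5\cap Q$ automatically gives $S_3\subset W$ and $S_3\subset Q$; the possible singularities of $W_5$ are then read off from Proposition~\ref{prop:dP5}. The main obstacle will be the link construction itself: Takeuchi's four-dimensional Sarkisov link was carried out for smooth $Y_{14}$, whereas here $\hat X$ may be singular---only a small $\QQ$-factorialization of~$X$---so one must verify that the construction descends to this mildly singular setting by tracking Mori cones and divisor-class relations on a possibly non-smooth ambient fourfold.
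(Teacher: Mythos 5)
Your proposal is correct and takes essentially the paper's route: realize $Y=Y_{14}$ as a linear section of $\Gr(2,6)$, project $\Gr(2,6)\dashrightarrow\Gr(2,5)$ from the center determined by the line $\ell$ (equivalently, from the Schubert $4$-plane of lines through the associated point of $\PP^5$), so that the induced map on $Y$ is the projection from $\ell$, i.e.\ the map $Y\dashrightarrow X$ of the link, landing $X$ in a codimension-two linear section $W_5\subset\PP^7$, and then conclude $X=W\cap Q$ by the degree count; your explicit identification of $\tau(E)$ as a cubic scroll is a useful addition that the paper leaves implicit. Two minor remarks: the assertion that $X$ ``must be cut out on $W$ by a single quadric'' should be justified, as in the paper, by the fact that $X\subset\PP^7$ is an intersection of quadrics (Proposition~\ref{prop:v-ample}), and your closing worry about a singular $\hat X$ is unnecessary here, since Theorem~\ref{th:rho=1} forces $Y$ to be smooth, so $\hat X$ is the blowup of a line on a smooth threefold and is smooth; the only possibly singular ambient object is $W$ itself, which is exactly what Proposition~\ref{prop:dP5} accounts for.
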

Here by cubic scroll we mean a surface $S=S_3\subset \PP^4$ of degree $3$ that is not contained in a hyperplane. 
Such a surface $S$ is either smooth (and isomorphic to the Hirzebruch surface $\FF_1$) or a cone over a rational twisted cubic curve.

\begin{proof} 
The morphism $f:\hat X\to Y$ is the blowup of a line $l\subset Y$ and the map $Y \dashrightarrow X=X_{10}\subset \PP^7$ is the projection from $l$ \cite[Corollary~4.3.2]{IP99}.
It is known that the variety $Y=Y_{14}\subset \PP^9$ can be realized as a 
codimension $5$ linear section of the Grassmannian $\Gr(2,6)\subset \PP^{14}$ \cite{Gushel:g8}, \cite{Debarre-Kuznetsov:GM}.
We regard $\Gr(2,6)$ as the family of lines in $\PP^5$.
Under this description, the line $l$ is a Schubert variety $\upsigma_{4,3}$.
There is exactly one 
four-dimensional linear subspace $\Pi\subset \Gr(2,6)$ (a Schubert variety $\upsigma_{4,0}$) containing $l$. The points on $\Pi$ are represented by 
lines passing through a fixed point $P\in \PP^5$.
The projection $\PP^5 \dashrightarrow \PP^4$ from $P$ induces 
the projection 
\[
\Psi: \Gr(2,6)\subset \PP^{14} \dashrightarrow \Gr(2,5)\subset \PP^9
\]
from $\Pi$ whose fibers are planes. By the construction, the image $\Psi(Y)$ is contained in a 
codimension $2$ linear section $W=W_5=\Gr(2,5)\cap \PP^7\subset \PP^{9}$. Since $Y\cap \Pi=l$, we have the following diagram whose restriction to $Y$ gives the left hand side of \eqref{eq:diagram}
\[
\xymatrix{
&
\hat X\ar[dl]_f\ar@{} !<-0pt,0pt>;[r]!<-0pt,0pt> |-*[@]{\subset}\ar@/^3em/[drrr] ^{\tau}
&
\widetilde \Gr(2,6)\ar[dr]^{\tau_G}\ar[dl]_{f_G}&
\\
Y\ar@{} !<-0pt,0pt>;[r]!<-0pt,0pt> |-*[@]{\subset}
&
\Gr(2,6)\ar@{-->}[rr]^{\Psi}
& &
\Gr(2,5) &X\ar@{} !<-0pt,0pt>;[l]!<-0pt,0pt> |-*[@]{\supset}
} 
\]
Here $\tau_G$ is a $\PP^2$-bundle and $X=\Psi(Y)\subset W$.
Recall that $X$ is an intersection of quadrics (see Proposition~\ref{prop:v-ample}). Hence there is a quadric $Q\subset \PP^7$ such that $X\subset W\cap Q$. Since $\deg X=10$, we have $X= W\cap Q$.
\end{proof}

\subsection{Case~\ref{g=6:trig}}
The Fano threefold of type~\ref{g=6:trig} is trigonal. 
It was considered in a number of papers, see 
\cite[\S~4.4, table~2]{BrownCortiZucconi-2004},
\cite[1.11]{Przhiyalkovskij-Cheltsov-Shramov-2005en},
\cite[2.9.3]{Takeuchi-2009},
\cite[Example~4.7]{P:ratF-1}. By \cite[2.9.3]{Takeuchi-2009}
the threefold $\hat X'$ in \eqref{eq:diagram} can be realized in
$\FF[0, 1^2, 2]$ as a member of the linear system 
$D_1\in |3M-2F|$.

For convenience of the reader we provide the detailed description.
\begin{sproposition}
Let $X$ be a variety of type~\ref{g=6:trig}. Then
the anticanonical image of $X$ is a divisor of the cone 
\[
\Cone(\FF[1^2,2]) \subset \PP^7
\]
over the $\PP^2$-bundle $\FF[1^2,2]$, and is the irreducible component of a cubic hypersurface $X +\Pi_1 +\Pi_2$ through the vertex, where $\Pi_i$ is the cone $\Cone(f) \subset \Cone(\FF[1^2,2])$ over a fiber $f$ of $\FF[1^2,2] \to \PP^1$; the singularity of $X$ is one node, the vertex of the cone.
\end{sproposition}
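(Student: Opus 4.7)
The plan is to leverage the realization of $\hat X'$ as a divisor of class $3M - 2F$ in $\FF := \FF[0, 1^2, 2]$ recalled just above the statement. Consider the birational morphism $\pi\colon \FF \to \PP^7$ defined by the complete linear system $|M|$: since $h^0(\OOO \oplus \OOO(1)^{\oplus 2} \oplus \OOO(2)) = 8$, the target is $\PP^7$; the image is precisely the cone $\Cone(\FF[1^2, 2])$, and $\pi$ contracts the distinguished section $s \subset \FF$ corresponding to the $\OOO$-summand (on which $M|_s = \OOO_{\PP^1}$) to the vertex $O$ of the cone. Using $K_\FF = -4M + 2F$, adjunction yields $-K_{\hat X'} = M|_{\hat X'}$, so by Corollary~\ref{cor:v-ample} the restriction $\pi|_{\hat X'}$ coincides with the anticanonical contraction $\tau'\colon \hat X' \to X$, whence $X = \pi(\hat X')$ sits in $\Cone(\FF[1^2,2])$.

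Since $\hat X' \sim 3M - 2F$, for any two fibers $F_1, F_2$ of $\FF \to \PP^1$ we have $\hat X' + F_1 + F_2 \sim 3M$. As $\pi$ is birational with $\pi_*\OOO_\FF = \OOO_{\Cone(\FF[1^2,2])}$ and the cone is projectively normal in $\PP^7$, every member of $|3M|$ is the pullback of a cubic hypersurface in $\PP^7$ cut on the cone. Hence $X + \Pi_1 + \Pi_2$ is cut out on $\Cone(\FF[1^2,2])$ by a cubic hypersurface, where $\Pi_i := \pi(F_i)$ is the cone over the $\PP^2$-fiber of $\FF[1^2,2] \to \PP^1$ under $F_i$; since $O \in \Pi_i$, the cubic passes through the vertex.

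It remains to show that $O$ is an ordinary double point and the unique singular point of $X$. In local fiber coordinates $x_0$ (weight $0$), $x_1, x_2$ (weight $1$), $x_3$ (weight $2$), the description of $H^0(\FF, 3M - 2F)$ via $\mathrm{Sym}^3 \EEE \otimes \OOO_{\PP^1}(-2)$ forces the equation of a generic $\hat X'$ to have the form $c\, x_0^2 x_3$ plus terms of fiber-weight $\ge 3$, with $c \in \Bbbk^{\times}$; thus $s = \{x_1 = x_2 = x_3 = 0\}$ is contained in $\hat X'$ and $\hat X'$ is smooth along $s$. In the normal bundle sequence
\[
0 \to \NNN_{s/\hat X'} \to \NNN_{s/\FF} \to \NNN_{\hat X'/\FF}|_s \to 0
\]
one has $\NNN_{s/\FF} = \OOO_{\PP^1}(-1)^{\oplus 2} \oplus \OOO_{\PP^1}(-2)$ and $\NNN_{\hat X'/\FF}|_s = \OOO_{\PP^1}(-2)$, with the surjection induced by the nonzero coefficient $c$; hence $\NNN_{s/\hat X'} \simeq \OOO_{\PP^1}(-1)^{\oplus 2}$, so $s$ is a $(-1,-1)$-curve and $\tau'$ contracts it to a node.

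The principal obstacle is this last step: one must carefully verify that $c$ is nonzero for a generic $\hat X'$, that $\hat X'$ is globally smooth (on and off $s$), and that $s$ is the unique flopping curve of $\tau'$, so that $O$ is the only singular point of $X$.
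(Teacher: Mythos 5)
The paper offers no proof of this proposition at all: it is reproduced ``for convenience of the reader'' from \cite[2.9.3]{Takeuchi-2009}, the only input being the realization, recalled just above the statement, of $\hat X'$ as a member of $|3M-2F|$ on $\FF[0,1^2,2]$. Your argument is exactly the natural unpacking of that realization, and it is essentially correct: $|M|$ maps $\FF[0,1^2,2]$ onto $\Cone(\FF[1^2,2])\subset\PP^7$ contracting only the section $s$ with $M|_s=\OOO_{\PP^1}$, adjunction gives $-K_{\hat X'}=M|_{\hat X'}$ (one should also note $H^0(\FF,M)\to H^0(\hat X',M|_{\hat X'})$ is an isomorphism, both spaces being $8$-dimensional, since $H^i(\FF,-2M+2F)=0$ for $i=0,1$), and $\hat X'+F_1+F_2\sim 3M$ together with projective normality of the cone gives the residual cubic-section description with $\Pi_i=\pi(F_i)=\Cone(f)$. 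The items you flag as the ``principal obstacle'' are in fact routine: $c\neq 0$ is an open condition on the coefficient of $x_0^2x_3$, the base locus of $|3M-2F|$ is exactly $s$ (so Bertini gives smoothness of a general member off $s$, and $c\neq0$ gives smoothness along $s$), and the exceptional locus of the contraction $\FF\to\Cone(\FF[1^2,2])$ is exactly $s$, so $s$ is the unique contracted curve and your normal-bundle computation shows it is a $(-1,-1)$-curve, whence one node at the vertex. Two small remarks: the other weight-two monomials $x_0x_1^2,x_0x_1x_2,x_0x_2^2$ do occur with constant coefficients (so ``plus terms of fiber-weight $\ge 3$'' is not literally accurate), but they are quadratic in the normal coordinates $x_1,x_2,x_3$ and do not affect the differential along $s$; and the ``one node'' clause should be read, as in the table's column $\#$, as a statement for general members of the family, since for special members $\hat X'$ is only factorial with terminal Gorenstein singularities and $X$ may acquire further singular points.
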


The del Pezzo threefold $Y'$ in the diagram \eqref{eq:diagram} must be locally factorial (or smooth) and center of the 
blowup must be contained in the smooth locus.

\subsection{Case~\ref{dP-blowup}}
By \cite[2.11.4]{Takeuchi-2009}
the threefold $\hat X'$ of type~\ref {dP-blowup} can be realized in
$\FF[0, 1^3, 2]$ as intersection 
$D_1\cap D_2$, $D_1\in |2M-2F|$, $D_2\in |2M-F|$.

\begin{sproposition}
The anticanonical model $X$ is embedded to $\Cone(\FF[1^3,2]) \subset \PP^9$, and is the intersection $W_1 \cap W_2$ of the divisors $W_i \subset \Cone(\FF[1^3,2])$ through the vertex, where $ W_1 + \Pi_1 + \Pi_2$, $W_2 + \Pi_3$ are quadric hypersurface sections of $\Cone(\FF[1^3,2])$ for the cone $\Pi_i = \Cone(\PP^3) \subset \Cone(\FF[1^3,2])$ over a fiber of $\FF[1^3,2] \to \PP^1$; the singularity of $V$ is one node corresponding to the vertex of $\Cone(\FF[1^3,2])$.
\end{sproposition}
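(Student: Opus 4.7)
The plan is to follow the same template as for Case~\ref{g=6:trig}. Starting from the realization recalled above, $\hat X' \subset \FF := \FF[0, 1^3, 2]$ is the complete intersection $D_1 \cap D_2$ with $D_1 \in |2M - 2F|$ and $D_2 \in |2M - F|$. Using $-K_\FF = 5M - 3F$, adjunction gives
\[
-K_{\hat X'} = \bigl(5M - 3F - (2M - 2F) - (2M - F)\bigr)\bigr|_{\hat X'} = M\bigr|_{\hat X'},
\]
so the anticanonical morphism $\tau' \colon \hat X' \to X$ is the restriction of the morphism $\Phi := \Phi_{|M|} \colon \FF \to \PP^9$, and $X = \Phi(\hat X')$.

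Next I would describe $\Phi$ globally. Since $h^0(\FF, M) = h^0(\PP^1, \EEE) = 10$ and $|M|$ is basepoint-free, $\Phi$ maps $\FF$ birationally onto its image $\Cone(\FF[1^3, 2]) \subset \PP^9$, a fivefold of degree $M^5 = 5$. The section $Z \simeq \PP^1 \subset \FF$ coming from the summand $\OOO \subset \EEE$ satisfies $M|_Z = \OOO_Z$, so $\Phi$ contracts $Z$ to the vertex of the cone and is an isomorphism on $\FF \setminus Z$. Because $H^0(Z, (2M - aF)|_Z) = H^0(\OOO_{\PP^1}(-a)) = 0$ for $a \ge 1$, every divisor in $|2M - aF|$ contains $Z$; in particular $Z \subset \hat X'$, so $\tau'$ collapses $Z$ to the vertex and the vertex lies in $X$.

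To read off the equations of $X$ inside the cone, I translate the classes of $D_1, D_2$ through $\Phi$. A fiber $F = \pi^{-1}(p)$ maps to the divisor $\Pi = \Cone(\PP^3)$ obtained as the cone over the corresponding fiber of $\FF[1^3, 2] \to \PP^1$. The multiplication $S^2 H^0(M) \to H^0(\FF, 2M)$ is surjective in this setting (checked via the explicit splitting of $S^2 \EEE$), so $|2M|$ on $\FF$ is cut out by quadrics from $\PP^9$. Hence a section of $|2M - F|$ is the proper transform of a quadric section of the form $W_2 + \Pi_3$ on $\Cone(\FF[1^3, 2])$, while a section of $|2M - 2F|$ corresponds to $W_1 + \Pi_1 + \Pi_2$. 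Since $\Phi$ is an isomorphism on $\FF \setminus Z$, one concludes $X = W_1 \cap W_2$. For the singularity, $\tau'$ is a small contraction whose exceptional locus is exactly $Z$, so the only possible singular point of $X$ is the vertex; that it is a node follows from the tabulated value $\# = 1$ in Theorem~\ref{th:rho=2}.

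The step I expect to be the main obstacle is the clean identification of the $W_i$: one has to verify that $\Phi^{*}(W_i + \sum \Pi_j) = D_i + \sum F_j$ with no excess multiplicity along $Z$, and to confirm the surjectivity $S^2 H^0(M) \twoheadrightarrow H^0(\FF, 2M)$ for $\EEE = \OOO \oplus \OOO(1)^3 \oplus \OOO(2)$. Both of these reduce to linear algebra on $\PP^1$ via the decomposition of $S^2 \EEE$ into its line bundle summands.
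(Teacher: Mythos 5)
The paper itself offers no proof of this proposition (it is stated as a consequence of Takeuchi's realization of $\hat X'$ in $\FF[0,1^3,2]$, which is only cited), and your argument is the natural way to derive it: adjunction giving $-K_{\hat X'}=M|_{\hat X'}$, the description of $\Phi_{|M|}$ as the birational contraction of the section $Z$ onto the vertex of $\Cone(\FF[1^3,2])\subset\PP^9$ of degree $5$, the remark that $Z\subset D_1\cap D_2$, and the surjectivity of $S^2H^0(\FF,M)\to H^0(\FF,2M)$ to identify members of $|2M-2F|$ and $|2M-F|$ with quadric sections minus two, respectively one, of the cones $\Pi_i$. These steps are correct. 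One small omission: to know that $\Phi|_{\hat X'}$ is the contraction $\tau'$ of \eqref{eq:diagram}, i.e.\ is given by the \emph{complete} anticanonical system, you should check that the restriction $H^0(\FF,M)\to H^0(\hat X',-K_{\hat X'})$ is bijective; injectivity follows from $H^0(M-D_i)=0$ and the vanishing of the relevant $H^1$ in the Koszul complex, and then $h^0(\FF,M)=10=g+2$ with $g=8$ gives surjectivity, after which Corollary~\xref{cor:v-ample} applies.

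The genuine gap is the final assertion that the vertex is an ordinary node. Deducing it from the entry $\#=1$ in the table of Theorem~\xref{th:rho=2} does not work: that entry only records the number of singular points of a general member (and is itself obtained from descriptions such as the present one, so the appeal is circular), and in any case ``one singular point'' does not imply ``node'' --- a small contraction of a single irreducible rational curve on a smooth threefold can a priori produce a $cA_n$ point with $n\ge 2$ or worse, not only $xy+zw=0$. What is needed is that $Z$ is a $(-1,-1)$-curve on $\hat X'$, and this can be proved cleanly, with no genericity beyond smoothness of $\hat X'$ along $Z$: from
\[
0\longrightarrow N_{Z/\hat X'}\longrightarrow N_{Z/\FF}\simeq \OOO_{\PP^1}(-1)^{\oplus 3}\oplus\OOO_{\PP^1}(-2)\longrightarrow \bigl(\OOO_{\FF}(D_1)\oplus\OOO_{\FF}(D_2)\bigr)\bigl|_Z\simeq \OOO_{\PP^1}(-2)\oplus\OOO_{\PP^1}(-1)\longrightarrow 0
\]
one gets $\deg N_{Z/\hat X'}=-2$, and since $N_{Z/\FF}$ has no summand of non-negative degree, every line subbundle of $N_{Z/\hat X'}$ has degree $\le -1$, forcing $N_{Z/\hat X'}\simeq\OOO_{\PP^1}(-1)\oplus\OOO_{\PP^1}(-1)$; contracting such a curve yields an ordinary double point. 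Note also that your statement ``the only possible singular point of $X$ is the vertex'' uses smoothness of $\hat X'$ away from $Z$, so, like the node claim, it should be understood for members with $\hat X'$ smooth (e.g.\ general ones), which is the reading intended by the proposition.
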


\section{Nonrationality criteria}

\subsection{Conic bundles}
Let $V$ be a threefold with terminal Gorenstein singularities.
A \textit{conic bundle} is a flat contraction $f:V\to S$ to a surface such that the anticanonical divisor
$-K_V$ is $f$-ample. In this situation any scheme fiber $V_s$, $s\in S$ is a plane conic.
A conic bundle $f:V\to S$ is said to be \textit{standard} if both $V$ and $S$ are smooth and the relative Picard group $\Pic(V/S)$ has rank $1$. 
The points $s\in S$ such that the corresponding fiber $V_s$ is singular 
form a reduced divisor. It is called the \textit{discriminant curve}. 

\begin{sproposition}[{\cite[Theorem~1]{Iskovskikh:Duke}}, see also {\cite[Proposition~5.6]{P:rat-cb:e}}]
\label{prop:conic-bundle:rat}
Let $f:V\to S$ be a conic bundle over a projective rational surface $S$ 
and let $\Delta\subset S$ be the discriminant curve. 
Suppose that there exists a contraction $\phi: S\to T$ to a curve 
such that for a fiber $S_t$ over a closed point $t\in T$ one has
$\Delta\cdot S_t\le 3$. Then $V$ is rational.
\end{sproposition}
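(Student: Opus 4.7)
The plan is to reduce the rationality of $V$ to that of its generic fibre over the curve $T$, regarded as a surface over the function field $K:=k(T)$. First form the composite $g:=\phi\circ f\colon V\to T$. Since $S$ is rational and dominates $T$, the curve $T$ is itself rational, so $T\cong\PP^1$ and $K=k(\PP^1)$ is a $C_1$-field by Tsen's theorem. Denote by $V_\eta$ and $S_\eta$ the generic fibres of $g$ and $\phi$; then $f$ induces a conic bundle $f_\eta\colon V_\eta\to S_\eta$ over $K$, whose discriminant $\Delta|_{S_\eta}$ has degree at most $3$ by hypothesis. In the intended geometric setting $\phi$ has rational general fibre, so $S_\eta$ is a smooth genus-zero curve over the $C_1$-field $K$, which therefore acquires a $K$-point and gives $S_\eta\cong\PP^1_K$.

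The task is thus to prove that $V_\eta$, a conic bundle over $\PP^1_K$ with at most three degenerate fibres, is $K$-rational. This is the classical Iskovskikh rationality criterion for conic bundles on surfaces, and is the main obstacle in the argument. The proof proceeds by a sequence of elementary transformations centred at points in the singular fibres: each such transformation strictly decreases the number of singular components, and the $C_1$-ness of $K$ supplies the $K$-rational points on degenerate conics needed to perform them. After at most three transformations the bundle becomes smooth, hence a Hirzebruch-type surface admitting a $K$-section, which is manifestly $K$-rational. This step is delicate and is the content of Iskovskikh's Duke paper and of \cite[Proposition~5.6]{P:rat-cb:e}.

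Finally, $K$-rationality of $V_\eta$ yields
\[
k(V)=K(V_\eta)\cong K(x,y)=k(\PP^1)(x,y)\cong k(\PP^1\times\PP^2),
\]
so $V$ is $k$-birational to $\PP^1\times\PP^2$ and hence rational, as required.
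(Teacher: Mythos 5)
The paper itself gives no proof of this proposition: it is imported verbatim from Iskovskikh's Duke paper and from Proposition~5.6 of the conic-bundle survey cited there, so the only meaningful comparison is with that standard argument. Your reduction is exactly that argument: compose with $\phi$, note $T\simeq\PP^1$ and that $K=\Bbbk(T)$ is $C_1$ by Tsen, identify $S_\eta\simeq\PP^1_K$ (this does use that the general fibre $S_t$ is a rational curve, which you rightly flag; in the references and in every application in this paper $\phi$ is a fibration into rational curves --- lines through a point or rulings of $\PP^1\times\PP^1$), observe that $V_\eta\to S_\eta$ is a conic bundle with at most three degenerate fibres, and pass from $K$-rationality of $V_\eta$ to rationality of $V$. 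All of that, and the final function-field computation, is correct.

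The gap is in your justification of the surface step, which is the whole content of the proposition. The mechanism you describe --- elementary transformations centred in the singular fibres, each ``strictly decreasing the number of singular components'' --- does not work: an elementary transformation based at a smooth $K$-point of a degenerate fibre $L_1+L_2$ (blow up a point of $L_1$, contract the $(-1)$-curve $L_2$) returns a fibre that is again a union of two intersecting $(-1)$-curves, so the number of degenerate fibres is unchanged; the only way to remove a degenerate fibre is to contract one of its two components over $K$, which is impossible when $L_1,L_2$ are interchanged by $\mathrm{Gal}(\bar K/K)$. Moreover $C_1$-ness is not needed to find points on degenerate conics (their singular point is already rational); where Tsen's theorem genuinely enters is in producing a $K$-point of $V_\eta$ itself --- the fibre over a general $K$-point of $\PP^1_K$ is a smooth conic over the $C_1$-field $K$, hence has a point --- and such a point is indispensable: even a conic bundle with \emph{no} degenerate fibres is a form of a Hirzebruch surface or of a quadric and need not be $K$-rational without a rational point, contrary to your last sentence. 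The correct ingredient is Iskovskikh's rationality criterion for geometrically rational surfaces (a conic bundle with $K^2\ge 5$, equivalently at most three degenerate geometric fibres, and with a rational point is $K$-rational), which is an older and separate result from the threefold statement you are proving; if you cite that criterion plus the Tsen argument for the point, your proof closes and coincides with the cited one, but as written the key step is both incorrectly sketched and attributed to the very statement under proof.
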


The following result was proved in 
\cite[Sect.~4]{Beauville:Prym} and \cite[Sect.~10]{Shokurov:Prym}.
\begin{stheorem}
Let $f:V\to S$ be a standard conic bundle over a projective rational surface $S$. 
Assume that $S$ is minimal, i.~e. it does not contain $(-1)$-curves.
The variety $V$ is rational if and only if its intermediate Jacobian $\J(V)$ is 
isomorphic as a principally polarized Abelian variety 
to a sum of the Jacobians of nonsingular curves, i.e., if its Griffiths component 
$\JG (V)$ is trivial.
\end{stheorem}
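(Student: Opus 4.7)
The plan is to prove the two implications separately, since they rely on very different techniques: the necessity uses a classical Hodge-theoretic obstruction, while sufficiency is the deep technical contribution of the Beauville--Shokurov theory.

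For the necessity direction, I would apply the Clemens--Griffiths criterion. If $V$ is rational, then resolving the indeterminacy of a birational map $V \dashrightarrow \PP^3$ by blowups of smooth points and smooth curves, and noting that $\J(\PP^3) = 0$, that blowing up a smooth point does not change the intermediate Jacobian, and that blowing up a smooth curve $C$ contributes $\J(C)$ as a direct summand of the intermediate Jacobian, one concludes that $\J(V)$ is isomorphic as a principally polarized Abelian variety to a direct sum of Jacobians of smooth curves. Since by definition $\JG(V)$ measures the non-Jacobian part of $\J(V)$, this gives $\JG(V) = 0$.

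For the sufficiency direction I would follow the Beauville--Shokurov strategy. The first step is Beauville's identification
\[
\J(V) \simeq P(\tilde \Delta/\Delta)
\]
as principally polarized Abelian varieties, where $\pi: \tilde \Delta \to \Delta$ is the canonical connected étale double cover obtained by assigning to each point $s \in \Delta$ the two components of the reducible fiber $V_s$. The hypothesis $\JG(V) = 0$ thus becomes a statement purely about Abelian varieties: the Prym $P(\tilde\Delta/\Delta)$ decomposes as a direct sum of Jacobians. I would then invoke Shokurov's classification of admissible double covers whose Prym is of Jacobian type. This produces a short list of geometric configurations on the minimal surface $S$ (essentially trigonal or hyperelliptic structures on $\Delta$, together with a few degenerate cases on $\PP^2$ and the Hirzebruch surfaces $\FF_n$). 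Finally, for each configuration I would construct an explicit sequence of Sarkisov links in the category of standard conic bundles over rational surfaces — elementary transformations along sections, changes of $\PP^1$-bundle structure on $S$, and contractions of vertical components — reducing $V$ to a conic bundle admitting a rational section, from which rationality follows by the splitting principle.

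The main obstacle is the sufficiency direction, and within it the passage from the Prym-theoretic classification to the explicit birational transformations. The Hodge-theoretic input tells us which discriminant configurations are possible, but producing an actual rational parameterization requires a delicate case analysis: one must carefully use the minimality of $S$, track how $\Delta$ and its double cover transform under each Sarkisov link, and argue that the process terminates in a model where rationality is manifest. This termination argument together with the case-by-case verification is precisely the technical heart of Shokurov's paper and, historically, the step that required the development of the whole framework of admissible covers and Prym degenerations.
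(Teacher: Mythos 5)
The paper gives no proof of this statement---it is quoted directly from Beauville and Shokurov---and your outline (Clemens--Griffiths for the necessity, and for the sufficiency Beauville's isomorphism $\J(V)\simeq P(\tilde\Delta/\Delta)$ combined with Shokurov's classification of Prym varieties that are sums of Jacobians, followed by the case-by-case reduction on the minimal surface $S$) is precisely the strategy of those cited references. So your approach is the intended one, with the genuinely hard step correctly identified as Shokurov's classification and the subsequent birational case analysis (whose endgame, strictly speaking, runs through Iskovskikh's and Panin's rationality criteria for the residual configurations rather than literally producing a section in every case).
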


\begin{scorollary}[{\cite[Sect.~4]{Beauville:Prym}}, {\cite[\S~10]{Shokurov:Prym}}]
\label{cb:P2-0}
Let $f:V\to\PP^2$ be a standard conic bundle and let $\Delta\subset \PP^2$ be
the discriminant curve.
If $\deg\Delta\ge 6$, then $V$ is not rational.
\end{scorollary}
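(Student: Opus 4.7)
The plan is to deduce this corollary directly from the preceding theorem, since the base $\PP^2$ is trivially minimal (it contains no $(-1)$-curves). Thus it suffices to show that under the hypothesis $\deg \Delta \ge 6$ the Griffiths component $\JG(V)$ is nontrivial, equivalently that the principally polarized intermediate Jacobian $\J(V)$ is not isomorphic to a product of Jacobians of smooth curves.

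The first step is to identify $\J(V)$ with the Prym variety of the natural étale double cover $\pi:\tilde\Delta\to\Delta$ associated with the conic bundle (the two points of the fiber $\pi^{-1}(s)$ corresponding to the two components of the singular conic $V_s$). This is the classical Beauville--Mumford identification
\[
\bigl(\J(V),\Theta\bigr)\;\cong\;\bigl(P(\tilde\Delta/\Delta),\Xi\bigr)
\]
as principally polarized abelian varieties. Hence $\JG(V)=0$ if and only if $(P(\tilde\Delta/\Delta),\Xi)$ decomposes as a product of Jacobians of smooth curves.

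The second step is to exclude this last possibility when $\deg\Delta\ge 6$. For $\Delta\subset\PP^2$ a smooth curve of degree $d\ge 6$ the genus is $g(\Delta)=(d-1)(d-2)/2\ge 10$ and the Prym has dimension $g(\Delta)-1\ge 9$. Beauville's analysis of the singularities of the Prym theta divisor, combined with the Prym--canonical embedding $\Delta\hookrightarrow\PP^{g(\Delta)-2}$ induced by the line bundle $\OOO_{\PP^2}(d-3)|_\Delta$, rules out all the Jacobian or decomposable cases in his classification (the Prym--canonical image of $\Delta$ is nondegenerate and not contained in any of the exceptional loci). When $\Delta$ is only nodal, one uses the partial desingularization procedure of Beauville/Shokurov and applies the same genus estimate to the normalization.

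The main obstacle is the second step: the identification of $\J(V)$ with a Prym is formal, but ruling out that this Prym is a sum of Jacobians is the geometric heart of the matter and relies on the fine structure of the theta divisor. However, in our setting the calculation reduces to verifying a single numerical inequality on $g(\Delta)$, which holds as soon as $\deg\Delta\ge 6$, so the corollary follows.
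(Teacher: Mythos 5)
The paper does not actually prove this statement: it is quoted as a known theorem of Beauville and Shokurov, and your first step — reduce via the preceding theorem (noting $\PP^2$ is minimal) to showing $\JG(V)\neq 0$, and identify $(\J(V),\Theta)$ with the Prym $(P(\tilde\Delta/\Delta),\Xi)$ of the admissible double cover — is exactly the route those references take, so in spirit you are following the same path as the paper's citation.

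However, your description of the decisive step contains a genuine error. The non-decomposability of the Prym does \emph{not} ``reduce to a single numerical inequality on $g(\Delta)$'': trigonal (and hyperelliptic) curves of arbitrarily large genus have Pryms that are Jacobians or sums of Jacobians, so no bound on $g(\Delta)$ alone can force $\JG(V)\neq 0$. What is actually used is structural: by the Mumford--Beauville--Shokurov classification, $(P,\Xi)$ can be a sum of Jacobians only when $\Delta$ is hyperelliptic, trigonal, quasi-trigonal, or a plane quintic with an even theta-characteristic (equivalently, Beauville's bound on $\dim\Sing\Xi$ via stable and exceptional singularities), and one then checks that a plane curve of degree $\ge 6$ — which has gonality $\ge 5$ when smooth — falls outside this list. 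Your treatment of the singular case is also not salvageable as stated: a standard conic bundle may well have a nodal discriminant, and the geometric genus of the normalization can drop arbitrarily (a $9$-nodal sextic has normalization of genus $1$), so ``the same genus estimate applied to the normalization'' proves nothing; Beauville's Section~4 and Shokurov's \S 10 work with the allowable double cover of the nodal curve $\Delta$ itself and its generalized Prym, not with the normalization. As a pointer to the literature your argument is acceptable (which is all the paper does), but as a proof the key step is mischaracterized and, taken literally, false.
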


\begin{scorollary}
\label{cb:P2}
Let $V$ be a variety with only terminal $\QQ$-factorial singularities and let
$f:V\to \PP^2$ be a Mori contraction.
Let $\Delta\subset \PP^2$ be
the discriminant curve. 
If the variety $V$ is not rational, then $\deg \Delta\ge 5$.
Moreover, if $\deg \Delta=5$, then $f$ is a standard conic bundle and
the double cover $\tilde \Delta\to \Delta$ is given by an odd theta-characteristic.
\end{scorollary}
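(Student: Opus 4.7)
My plan is to deduce both assertions from Proposition~\ref{prop:conic-bundle:rat} and the preceding Beauville--Shokurov theorem on intermediate Jacobians. The first step is a reduction to a standard conic bundle: since $f$ is a Mori contraction from a $\QQ$-factorial terminal threefold, the relative Picard rank over $\PP^2$ is $1$; however $V$ may be singular. I take a small $\QQ$-factorial terminal resolution where possible and then run a relative minimal model program over $\PP^2$ to produce a birational standard conic bundle $\bar f : \bar V \to \bar S$ with $\bar S$ a smooth rational surface equipped with a birational morphism $\mu : \bar S \to \PP^2$. The new discriminant $\bar\Delta$ is the proper transform of $\Delta$ possibly augmented by exceptional components of $\mu$, but the degree changes only in a controlled way.

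To establish $\deg\Delta \ge 5$, blowing up a general point $p\in \PP^2\setminus\Delta$ yields a birational model $\FF_1\to\PP^2$ with ruling $\phi:\FF_1\to\PP^1$. After base change the conic bundle over $\FF_1$ is birational to $V$, and its discriminant meets each ruling fiber in exactly $\deg\Delta$ points. Hence Proposition~\ref{prop:conic-bundle:rat} yields $V$ rational if $\deg\Delta\le 3$, contradicting the hypothesis. For $\deg\Delta=4$, I invoke the Prym analysis: on a smooth plane quartic $\Delta$ one has $g(\Delta)=3$, so $\dim P(\tilde\Delta/\Delta) = g(\Delta)-1 = 2$, and every principally polarized abelian surface is either the Jacobian of a smooth genus-$2$ curve or a product of elliptic curves, hence a sum of Jacobians. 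The preceding theorem then forces $V$ to be rational, again a contradiction. Thus $\deg\Delta\ge 5$.

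For the refined statement when $\deg\Delta=5$, the minimality of $5$ itself forces $f$ to be standard: any terminal singularity of $V$ or failure of flatness would enable an elementary transformation of the conic bundle centered at that singularity, strictly decreasing $\deg\Delta$ below $5$ and contradicting the previous paragraph. Hence $V$ is smooth and $f$ is a standard conic bundle. On the smooth plane quintic $\Delta$, adjunction gives $K_\Delta = \OOO_\Delta(2)$, so $\OOO_\Delta(1)$ is a theta-characteristic with $h^0=3$, hence \emph{odd}. The \'etale double cover $\tilde\Delta\to\Delta$ corresponds to a $2$-torsion element $\eta\in\Pic(\Delta)[2]$, and $\theta := \OOO_\Delta(1)\otimes\eta$ is another theta-characteristic. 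By the preceding theorem, nonrationality of $V$ is equivalent to $P(\tilde\Delta/\Delta)$ not being a sum of Jacobians; Shokurov's classical criterion for Pryms of \'etale double covers of plane quintics translates this to the condition that $\theta$ be odd, yielding the stated description of the double cover.

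The principal difficulty I anticipate is precisely the reduction to a standard conic bundle with controlled discriminant degree: one must verify that each elementary transformation centered at a terminal singularity of the total space strictly decreases $\deg\Delta$, so that minimality of $\deg\Delta = 5$ actually forces smoothness of $V$ and standardness of the original $f$, rather than merely of some birational model.
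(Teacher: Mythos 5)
Your first assertion ($\deg\Delta\ge 5$) follows roughly the intended route, but note two weak points: in the degree-$4$ case you assume $\Delta$ is a \emph{smooth} plane quartic, whereas the discriminant of a standard conic bundle may be nodal or reducible, so you need either Beauville's generalized (admissible) Prym or, more simply, the pencil of lines through a singular point of $\Delta$, which meets the discriminant of a standard model in at most $3$ points so that Proposition~\ref{prop:conic-bundle:rat} applies; and your reduction to a standard model needs the precise comparison $\Delta^\bullet\le(\alpha^*\Delta)_{\red}$ for a standard model $f^\bullet:V^\bullet\to S^\bullet$ with $\alpha:S^\bullet\to\PP^2$ (see \cite[Corollary~3.3.6]{P:rat-cb:e}), not merely the assertion that ``the degree changes in a controlled way.''

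The genuine gap is in the second assertion. Your claim that a terminal singularity of $V$ (or non-flatness of $f$) permits an elementary transformation that \emph{strictly decreases} $\deg\Delta$ below $5$ is unproven and, as formulated, not even well posed: elementary transformations change the model and in general the base surface, and there is no a priori reason that the discriminant of the resulting Mori fibre space over $\PP^2$ drops in degree; you yourself flag this as the principal difficulty, but it is exactly the point that needs an argument. The paper closes it differently: if $f$ is not standard, then $V$ is singular over some point $o\in\PP^2$, and by \cite[Corollary~10.8]{P:rat-cb:e} the point $o$ is then a \emph{singular} point of $\Delta$; passing to a standard model with $\Delta^\bullet\le(\alpha^*\Delta)_{\red}$, and using \cite[Lemma~10.10]{P:rat-cb:e} to exclude components of $\alpha^{-1}(o)$ from $\Delta^\bullet$ when $\mult_o\Delta=2$, the proper transform of the pencil of lines through $o$ meets $\Delta^\bullet$ in at most $3$ points, so $V$ is rational by Proposition~\ref{prop:conic-bundle:rat}. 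Hence nonrationality together with $\deg\Delta\le 5$ already forces $f$ to be standard, and the theta-characteristic statement is then obtained directly from Panin's theorem \cite{Panin1980} (rationality in the even case); your Prym--Masiewicki translation is consistent with this but is not needed once Panin's result is invoked.
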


\begin{proof}
Suppose that $f$ is not a standard conic bundle.
Thus $V$ is singular at some point of a fiber $f^{-1}(o)$, $o\in \PP^2$.
Let $f^\bullet : V^\bullet\to S^\bullet$ be a standard model of 
$f:V\to \PP^2$ \cite{Sarkisov:82e}
and let $\Delta^\bullet$ be the corresponding discriminant curve.
We may assume that $S^\bullet$ dominates $\PP^2$.
Thus we have a birational morphism $\alpha: S^\bullet\to \PP^2$ so that
\begin{equation*}
\Delta^\bullet\le (\alpha^*\Delta)_{\red}
\quad \text{and}\quad
\alpha(\Delta^\bullet)=\Delta
\end{equation*}
(see e.g. \cite[Corollary~3.3.6]{P:rat-cb:e}).
Let $\LLL$ be the pencil of lines on $\PP^2$ passing through $o$
and let $\LLL^\bullet$ be its proper transform on $S^\bullet$. Assume that $\deg \Delta\le 5$.
Then by the projection formula 
\[
\LLL^\bullet \cdot \alpha^* \Delta=
\LLL \cdot \Delta\le 5.
\]
By \cite[Corollary~10.8]{P:rat-cb:e} the point $o$ is singular on $\Delta$.
Let $m_o$ the multiplicity of $\Delta$ at $o$. Then a general member of $\LLL^\bullet$
meets a component of $\alpha^* \Delta$ whose coefficient equals $m_o$.
Hence, 
\[
\LLL^\bullet \cdot \Delta^\bullet \le \LLL^\bullet \cdot \alpha^* \Delta-(m_o-1)\le 6-m_o.
\]
If $m_o\ge 3$, then $\LLL^\bullet\cdot\Delta^\bullet\le 3$ and 
$V^\bullet$ is rational by 
Proposition~\ref{prop:conic-bundle:rat}. 
Thus we may assume that 
$m_o=2$. Then by \cite[Lemma~10.10]{P:rat-cb:e} the curves
$\alpha^{-1}(o)$ and $\Delta^\bullet$ have no common components.
Again 
\[
\LLL^\bullet 
\cdot\Delta^\bullet\le \LLL^\bullet \cdot \alpha^* \Delta -m_o\le3
\]
and $V^\bullet$ is rational again by 
Proposition~\ref{prop:conic-bundle:rat}.
Thus we may assume that $f$ is a standard conic bundle.
If the corresponding double cover of $\Delta$ is defined by an even theta-characteristic,
then $V$ is rational by \cite{Panin1980} (see also \cite[Theorem~1]{Iskovskikh:Duke}, \cite[Proposition~8.1]{P:rat-cb:e}). 
\end{proof}

\subsection{Del Pezzo fibrations}
Let $V$ be a threefold with terminal Gorenstein singularities.
A \textit{del Pezzo fibration} is a contraction $f:V\to B$ to a curve such 
that the anticanonical divisor
$-K_V$ is $f$-ample. In this situation the generic fiber $V_\eta$ is a smooth del Pezzo surface. The \textit{degree} of a del Pezzo fibration $f:V\to B$ is the 
anticanonical degree of $V_\eta$.

\begin{slemma}
\label{lemma:del-pezzo}
Let $f:V\to B$ be a del Pezzo fibration of degree $d$. Assume that $V$ is rationally connected but not rational.
Then $B\simeq\PP^1$ and $d\le 4$.
\end{slemma}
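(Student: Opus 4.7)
The plan is to address the two conclusions separately, in both cases by reducing to information at the generic point of $B$.

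For the base, since $V$ is rationally connected, two general points of $V$ can be joined by a rational curve whose image under $f$ is a rational curve through two general points of $B$. Hence $B$ is uniruled and, being a smooth projective curve, $B \simeq \PP^1$. Equivalently, pullback gives an injection $H^0(B, \Omega^1_B) \hookrightarrow H^0(V, \Omega^1_V) = 0$, forcing $g(B) = 0$.

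For the degree bound, I would argue by contradiction: assume $d \ge 5$ and deduce that $V$ is rational. The generic fiber $V_\eta$ is a smooth del Pezzo surface of degree $d$ over the function field $K := \Bbbk(t) = \Bbbk(B)$. By Tsen's theorem, $K$ is a $C_1$-field, and in particular $\mathrm{Br}(K) = 0$. The key classical input is that over a $C_1$-field every del Pezzo surface of degree $\ge 5$ is $K$-rational: for $d = 9$ this is triviality of the Brauer--Severi class; for $d = 8$, both forms (the $\PP^1 \times \PP^1$-form and the blow-up of $\PP^2$ in one point) split over $K$ for the same reason; for $d = 7$ one contracts the distinguished Galois-invariant $(-1)$-curve to reduce to the previous case; and for $d = 5, 6$, the results of Enriques, Manin, and Iskovskikh--Swinnerton-Dyer give $K$-rationality once a $K$-rational point exists, which the $C_1$-property produces from the low-degree anticanonical embedding (Enriques' theorem even handles $d = 5$ with no hypothesis on $K$). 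Hence $V_\eta$ is $K$-birational to $\PP^2_K$, so $V$ is $\Bbbk$-birational to $\PP^2 \times \PP^1$, which is rational --- contradicting the hypothesis.

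I expect the main obstacle to lie in the boundary cases $d = 5, 6$, where both the existence of a $K$-rational point and the classical birational construction (typically a conic-bundle reduction after contracting a Galois orbit of $(-1)$-curves) must be invoked. The bound $d \le 4$ is sharp, as reflected elsewhere in the paper: smooth quartic del Pezzo fibrations can carry non-trivial intermediate-Jacobian obstructions to rationality, and they appear precisely in the non-rational cases \ref{dP-dP}, \ref{dP-conic}, and \ref{dP-blowup} of Theorem~\ref{th:rho=2}.
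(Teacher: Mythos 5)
Your proposal is correct and follows essentially the same route as the paper: the paper's proof also passes to the generic fiber, uses the $C_1$-property of $\Bbbk(B)$ to produce a rational point, and invokes the classical rationality of del Pezzo surfaces of degree $\ge 5$ with a point (citing Manin's book in place of your case-by-case discussion), the statement $B\simeq\PP^1$ being left implicit there exactly as your rational-connectedness argument supplies it.
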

\begin{proof}
Let $V_\eta$ be the scheme generic fiber. Since $\Bbbk(B)$ is a 
\type{c_1}-field, $V_\eta$ has a $\Bbbk(B)$-point.
If $K_{V_\eta}^2\ge 5$, then the surface $V_\eta$ is rational over 
$\Bbbk(B)$ according to \cite[Ch.~IV, \S~7]{Manin:book:74}.
\end{proof}

Denote by $\Eu(V)$ the topological Euler characteristic of $V$.

\begin{stheorem}[{\cite{Alekseev:87}}]
\label{thm:DPal}
Let $\pi: V\to \PP^1$ be a standard
del Pezzo fibration of degree $4$
such that $V$ is smooth and the relative Picard group $\Pic(V/B)$ has rank $1$.
\begin{enumerate}
\item 
If $\Eu(V)\notin \{0,\, -8,\, -4\}$ then $V$ is not rational.
\item 
If $\Eu(V)\in \{0,\, -8\}$ then $V$ is rational.
\item 
If $\Eu(V)=-4$ then $V$ is rational if and only if the Griffiths component of 
its intermediate Jacobian is trivial.
\end{enumerate}
\end{stheorem}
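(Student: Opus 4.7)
The plan is to combine the Clemens--Griffiths obstruction (via an analysis of the intermediate Jacobian as a Prym variety) with explicit birational constructions in the low--Euler-characteristic cases, using $\Eu(V)$ as a bookkeeping invariant for the discriminant of the pencil of quadrics defining $V$.

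First I would set up the geometric model: under the standardness assumption, $V$ sits as the base locus of a pencil of relative quadrics inside a $\PP^4$-bundle $\PP_{\PP^1}(\EEE)$, and the singular fibers of $\pi$ correspond to points $s\in \PP^1$ where the pencil of quadrics in the fiber $\PP^4_s$ drops in rank. The classification of degenerations of quartic del Pezzo surfaces coming from such rank-drops gives for each type a prescribed contribution to the fiberwise jump in $\Eu$; since $\Eu(V)=16+\sum_s(\Eu(V_s)-8)$, the conditions $\Eu(V)\in\{0,-4,-8\}$ translate into tight numerical constraints on the discriminant configuration. Smoothness of $V$ together with $\rk\Pic(V/\PP^1)=1$ excludes the most degenerate rank-drops, leaving a short list of possibilities.

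Next I would identify the intermediate Jacobian. The classical construction (Beauville for pencils of quadrics, Shokurov/Tyurin in this fibered setting) associates to $V$ a double cover $\widetilde{\Delta}\to\Delta$ of smooth curves, where $\Delta\subset\PP^1\times\PP^1$ \textit{discriminates} the two rulings of the rank-$4$ quadrics of the pencil, and yields
\[
\J(V)\;\simeq\;P(\widetilde{\Delta}/\Delta)
\]
as principally polarized abelian varieties. The invariants $g(\Delta)$ and $g(\widetilde\Delta)$ are determined by the discriminant data, hence ultimately by $\Eu(V)$, and the Clemens--Griffiths criterion reduces rationality of $V$ to triviality of the Griffiths component $\JG(V)$, i.e.\ to the question whether $P(\widetilde\Delta/\Delta)$ splits as a product of Jacobians of smooth curves.

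With this in hand the three cases are dispatched as follows. For $\Eu(V)\notin\{0,-4,-8\}$, the induced cover $\widetilde\Delta\to\Delta$ falls outside the short Shokurov/Beauville list of Prym data whose Prym is a Jacobian (essentially: hyperelliptic base, trigonal base of low genus, or plane quintic), so $\JG(V)\ne 0$ and $V$ is not rational. For $\Eu(V)\in\{0,-8\}$ the discriminant is small enough that one constructs an explicit section or multisection of $\pi$ and projects from it, obtaining a birational model of $V$ as a standard conic bundle $V\dashrightarrow \PP^2$ whose discriminant curve has degree $\le 3$; Proposition~\ref{prop:conic-bundle:rat} then yields rationality. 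For $\Eu(V)=-4$ the Prym data is exactly at the borderline where $P(\widetilde\Delta/\Delta)$ may or may not be a Jacobian, so the Clemens--Griffiths criterion gives directly the ``if and only if'' statement. The main obstacle will be Step~2: the fine Prym-variety analysis ruling out Jacobian decompositions for all remaining values of $\Eu(V)$ and, in the borderline case, aligning the numerical bookkeeping precisely with the known list of Prym varieties that are Jacobians.
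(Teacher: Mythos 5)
First, a remark on provenance: the paper does not prove this statement at all --- it is quoted from Alekseev's paper \cite{Alekseev:87} (hence the bracketed citation in the theorem header), so there is no internal proof to compare yours against. Your overall framework --- pass from the quartic del Pezzo fibration to a birational conic bundle model, identify $\J(V)$ with a Prym variety, translate $\Eu(V)$ into discriminant data via $\Eu(V)=16+\sum_s(\Eu(V_s)-8)$, and invoke the Shokurov--Beauville criterion --- is the standard route and is consistent with how the result is established in the literature. One correction to the setup: the relevant double cover is not obtained from ``the two rulings of the rank-$4$ quadrics'' over a curve in $\PP^1\times\PP^1$; it is the discriminant double cover of the conic bundle one gets from the generic fiber (a quartic del Pezzo surface over $\Bbbk(t)$, which acquires a conic bundle structure over $\PP^1_{\Bbbk(t)}$ after blowing up a $\Bbbk(t)$-point supplied by Tsen's theorem), and it lives over a curve in a Hirzebruch surface, not canonically in $\PP^1\times\PP^1$.

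The genuine gap is in your treatment of case (ii), whose mechanism is backwards. Each degenerate fiber of a standard model is a nodal quartic del Pezzo surface with $\Eu(V_s)<8$ (equal to $7$ for a one-nodal fiber), so more negative $\Eu(V)$ means \emph{more} degeneration: $\Eu(V)=-8$ corresponds to $24$ one-nodal fibers, strictly more than the nonrational cases $\Eu(V)=-2$ or $-6$. Hence ``the discriminant is small enough'' cannot be the reason for rationality, and the claimed birational model as a conic bundle over $\PP^2$ with discriminant of degree $\le 3$ is unsubstantiated (it is also unnecessary: a rational threefold can perfectly well have $h^{1,2}=7$, which is what $\Eu(V)=-8$ forces). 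The actual content of the theorem in the rational cases is that the discriminant curve of the associated conic bundle is then of one of the special types in Shokurov's list (hyperelliptic, trigonal, \dots), so that the Prym is a Jacobian and an explicit birational map to $\PP^3$ exists; establishing the precise dictionary between $\Eu(V)$ and the numerical type of the Prym data is exactly the step you defer as ``the main obstacle'', and it is where the whole proof lives. For the same reason case (i) is not done: you must check that \emph{every} excluded value of $\Eu(V)$ gives Prym data outside Shokurov's list, including the a priori values $\Eu(V)>0$, where $\JG(V)$ would vanish for dimension reasons and nonrationality can only hold because such fibrations do not exist --- a point your sketch does not address.
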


Note that a smooth complete intersection $V\subset \FF(a_0,\dots,a_4)$ of 
divisors $D_1\sim 2 M+b_1F$ and $D_2\sim 2 M+b_2F$ is a del Pezzo fibration as in Theorem~\ref{thm:DPal}.
The following fact is taken from \cite{Shramov2006}.
For convenience of the reader we provide a complete proof.

\begin{slemma}
\label{lemma:Eu}
Let $V\subset \FF(a_0,\dots,a_4)$ be a smooth complete intersection of 
divisors $D_1\sim 2 M+b_1F$ and $D_2\sim 2 M+b_2F$, where
$M$ is the tautological divisor and $F$ is the class of the fiber. Then we have
\[
\Eu(V) = 16-16 \sum a_i - 20\sum b_i.
\]
\end{slemma}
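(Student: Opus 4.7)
The plan is to compute $\Eu(V) = \int_V c_3(T_V)$ directly via Chern class algebra on the ambient projective bundle $P := \FF[a_0,\dots,a_4]$. First I would determine $c(T_P)$ by combining the relative tangent sequence $0 \to T_{P/\PP^1} \to T_P \to \pi^*T_{\PP^1} \to 0$ with the relative Euler sequence for $\mathscr{E} = \bigoplus_{i=0}^4 \OOO_{\PP^1}(a_i)$, which yields
\[
c(T_P) = (1 + 2F)\prod_{i=0}^{4}(1 + M - a_i F).
\]
From the normal bundle sequence $0 \to T_V \to T_P|_V \to \OOO_V(D_1) \oplus \OOO_V(D_2) \to 0$ with $D_j = 2M + b_j F$, together with the pushforward identity
\[
\Eu(V) = \int_V c_3(T_V) = \int_P c_3(T_V)\cdot(2M + b_1 F)(2M + b_2 F),
\]
the problem reduces to intersection theory on $P$, with $c(T_V) = c(T_P)\bigl/\bigl[(1+D_1)(1+D_2)\bigr]$ treated as a rational expression in the pulled-back classes $M$, $F$.

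The Chow ring of $P$ is $\ZZ[M,F]$ modulo $F^2 = 0$ and the tautological relation $M^5 = (\sum a_i)M^4 F$, with $\int_P M^4 F = 1$. Because $F^2 = 0$, every rational expression appearing collapses to the form $X(M) + Y(M)\,F$ with $X, Y$ polynomials in $M$ alone; explicitly,
\[
c(T_P) = (1+M)^5 + (1+M)^4\bigl(2(1+M) - \textstyle\sum_i a_i\bigr)F,
\]
\[
\frac{1}{(1+2M+b_1F)(1+2M+b_2F)} = \frac{1}{(1+2M)^2} - \frac{(b_1+b_2)F}{(1+2M)^3}.
\]
Multiplying out, one obtains $c_3(T_V) = P_3(M) + P_2(M)\,F$ for explicit polynomials $P_2, P_3$; multiplying by $(2M + b_1 F)(2M + b_2 F) = 4M^2 + 2(b_1+b_2) MF$ and pushing forward, $\Eu(V)$ splits into
\[
4 \int_P M^2\, P_3(M) \;+\; 2(b_1+b_2)\int_P MF\, P_3(M) \;+\; 4\int_P M^2 F\, P_2(M),
\]
which are evaluated using $\int_P M^4 F = 1$ and $M^5 = (\sum a_i)M^4F$.

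The final step is a mechanical binomial expansion: from $(1+M)^5 = 1 + 5M + 10M^2 + 10M^3 + \cdots$ and $(1+2M)^{-k} = \sum_j (-1)^j\binom{j+k-1}{k-1}2^j M^j$ one reads off the coefficient of $M^3$ in $P_3$ and of $M^2$ in $P_2$, assembles the three contributions, and checks the cancellations. I expect no conceptual difficulty; the only real obstacle is bookkeeping, since a single sign error in the $F$-linear corrections shifts the numerical coefficients in the answer. In particular, the $-20\sum b_i$ coefficient in the final formula emerges as the sum of a $-4(b_1+b_2)$ contribution from the cross term $MF\cdot P_3$ and a $-16(b_1+b_2)$ contribution from the $b$-linear part of $P_2$.
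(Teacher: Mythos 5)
Your proposal follows essentially the same route as the paper: the normal bundle sequence combined with the relative Euler sequence gives $c_t(\mathscr{T}_V)=(1+2Ft)\prod_i(1+(M-a_iF)t)\prod_j(1+(2M+b_jF)t)^{-1}$, and the Euler number is then extracted by multiplying by $(2M+b_1F)(2M+b_2F)$ and evaluating with $F^2=0$, $M^4\cdot F=1$, $M^5=\sum a_i$ -- exactly the paper's (more tersely written) computation. The intermediate data you quote are correct (the coefficient of $M^3$ in the $F$-free part is $-2$ and the $b$-linear part of the $F$-coefficient in degree $2$ is $-4(b_1+b_2)$, giving $-4(b_1+b_2)-16(b_1+b_2)=-20\sum b_i$), so the plan assembles to the stated formula.
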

\begin{proof}
Let $\mathscr{T}_{V}$ be the tangent bundle on $V$ and let $\NNN_{V/\FF}$ be 
the normal bundle to $V$ in $\FF:=\FF(a_0,\dots,a_4)$. For simplicity, we also 
put 
$\EEE:=\oplus \OOO_{\PP^1} (a_i)$. From the exact sequence
\[
0 
\longrightarrow\mathscr{T}_{V}\longrightarrow\mathscr{T}_{\FF}
\longrightarrow\NNN_{V/\FF}\longrightarrow 0
\]
we obtain the following relation on Chern classes:
\[
\cc_{t}\left (\mathscr{T}_{V}\right)= \cc_{t}\left (\mathscr{T}_{\FF}\right) 
\cdot \cc_{t}\bigl (\NNN_{V/\FF}\bigr)^{-1}.
\]
From relative Euler exact sequence
\[
0 \longrightarrow \OOO_{\FF} \longrightarrow f^* \EEE^\vee \otimes 
\OOO_{\FF}(M) \longrightarrow \mathscr{T}_{\FF/\PP^1} \longrightarrow 0
\]
we obtain $\cc_{t}\left (\mathscr{T}_{\FF/\PP^1}\right)=\cc_{t}\left (f^* 
\EEE^\vee \otimes \OOO_{\FF}(M)\right) $. Hence,
\begin{multline*}
\cc_{t}\left (\mathscr{T}_{V}\right)=
\cc_{t}(f^*\mathscr{T}_{\PP^1}) \cdot \cc_{t}\left (f^* \EEE^\vee \otimes 
\OOO_{\FF}(M)\right) \cdot \cc_{t}\bigl (\NNN_{V/\FF}\bigr)^{-1}=
\\
=(1+2Ft)\cdot \prod_i \left (1+(M-a_iF)t\right) \prod_j \left (1+(2M+b_jF)t\right)^{-1}.
\end{multline*}
Taking into account that $\Eu(V)=\mathrm{c}_3(\mathscr{T}_{V})$, 
$M^4\cdot F=1$ and $M^5=\sum a_i$ we obtain the desired equality.
\end{proof}

\begin{proposition}
All the varieties described in 
\ref{blowup-V14}, 
\ref{conic-conic},
\ref{blowup-cubic-point},
\ref{conic-blowup}, 
\ref{g=6:nonG}, 
\ref{dP-blowup}
are not rational.
General varieties described in 
\ref{dP-dP}, 
\ref{blowup-W2-point}, 
\ref{g=5:trig},
\ref{g=5:conic-G}, \ref{dP-conic}, \ref{g=6:trig}
are not rational.
\end{proposition}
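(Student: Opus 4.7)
The plan is to go case by case through the twelve rows of the table, using the Sarkisov link \eqref{eq:diagram} to exhibit $X$ as birational either to a model whose nonrationality is already known from Theorem~\ref{th:rho=1}, or to a Mori fiber space (conic bundle over $\PP^2$, or del Pezzo fibration over $\PP^1$) for which we can invoke Corollary~\ref{cb:P2-0} or Theorem~\ref{thm:DPal}. Since birational invariance of (non)rationality passes freely across the flop $\chi$ and the small $\QQ$-factorializations $\tau$, $\tau'$, it suffices in each case to prove nonrationality for one of the endpoints $Y$, $Y'$, or for $\hat X$, $\hat X'$ themselves.

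First I would dispatch the cases in which $X$ is birational to a smooth Fano threefold of Picard number $1$ that is always nonrational. In \ref{blowup-V14} and \ref{conic-blowup} we have $Y=Y_{14}$ a smooth threefold of genus $8$; in \ref{blowup-cubic-point}, \ref{g=6:nonG}, \ref{dP-blowup} we have $Y=Y_3$ a smooth cubic in $\PP^4$; in \ref{conic-conic} the four-dimensional link described in \ref{ssect:conic-conic} realizes $X$ birationally as a smooth cubic $V_3\subset\PP^4$. In all of these cases, Theorem~\ref{th:rho=1}\ref{fact:iota=2}, \ref{fact:X14} gives the unconditional nonrationality. For \ref{blowup-W2-point} the variety $X$ is itself a del Pezzo threefold of degree $1$ (Proposition~\ref{prop:blowup-W2-point}), and for \ref{g=6:trig} the variety $X$ is birational to the del Pezzo threefold $Y=Y_2$ of degree $2$; in both situations Theorem~\ref{th:rho=1}\ref{fact:iota=2} yields nonrationality of a general member.

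Next I would treat the cases in which one side of the link carries a conic bundle structure $f'\colon\hat X'\to\PP^2$ with discriminant curve of degree $\geq 6$: case \ref{g=5:trig} with $\deg\Delta=7$, and cases \ref{g=5:conic-G} and \ref{dP-conic} with $\deg\Delta=6$. For a general $X$ in each family the descriptions in Section~\ref{sect:descr} show that $\hat X'$ is smooth with $\uprho(\hat X'/\PP^2)=1$, so $f'$ is a standard conic bundle; Corollary~\ref{cb:P2-0} then gives nonrationality of $\hat X'$ and therefore of $X$. For case \ref{dP-dP} both sides of the link are del Pezzo fibrations of degree $4$ over $\PP^1$; using the realization $\hat X\subset\FF[0^3,1^2]$ as a complete intersection of two divisors in $|2M|$, Lemma~\ref{lemma:Eu} gives
\[
\Eu(\hat X)=16-16\cdot 2-20\cdot 0=-16\notin\{0,-8,-4\},
\]
so for a general smooth standard model Alekseev's Theorem~\ref{thm:DPal} applies and yields nonrationality.

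The principal obstacles are twofold. First, in the conic bundle cases one must verify that for a general $X$ the link does produce a \emph{standard} conic bundle (smoothness of $\hat X'$ and relative Picard rank one over $\PP^2$), since Corollary~\ref{cb:P2-0} fails without standardness; this is a smoothness check built from the explicit descriptions in Section~\ref{sect:descr}. Second, in case \ref{dP-dP} one has to confirm that a general member of the family really corresponds to a smooth standard model in $\FF[0^3,1^2]$ so that Lemma~\ref{lemma:Eu} is applicable with the claimed $a_i$, $b_j$; granted this, the computation $\Eu=-16$ is immediate and Theorem~\ref{thm:DPal} closes the case unconditionally, avoiding the subtler $\Eu=-4$ regime where the Griffiths component would have to be analyzed.
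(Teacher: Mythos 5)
Your proposal is essentially the paper's own proof: it reduces each row of the table through the link \eqref{eq:diagram} to a known nonrational model (where the paper cites Clemens--Griffiths for the cubic, the birational equivalence of $Y_{14}$ with a cubic, and Voisin for the quartic double solid, you quote Theorem~\ref{th:rho=1} as a black box, which is equivalent), it handles \ref{g=5:trig}, \ref{g=5:conic-G}, \ref{dP-conic} by Corollary~\ref{cb:P2-0} after observing that a general member yields a smooth standard conic bundle, and for \ref{dP-dD}\let\dummyrelax\relax\ref{dP-dP} it computes $\Eu(\hat X)=16-16\cdot 2-20\cdot 0=-16$ in $\FF[0^3,1^2]$ and applies Theorem~\ref{thm:DPal}, exactly as in the paper. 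The one step to repair is case~\ref{blowup-W2-point}: a general variety of this type is a \emph{special} (nodal, non-$\QQ$-factorial) del Pezzo threefold of degree $1$, so neither the ``general member'' clause nor the ``smooth member'' clause of Theorem~\ref{th:rho=1} applies to $X$ itself; instead, as you do for \ref{g=6:trig} and as the paper does via Voisin's theorem, pass to the endpoint $Y=Y_2$, which for general $X$ is a smooth quartic double solid and is covered by Theorem~\ref{th:rho=1}\ref{fact:iota=2}.
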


\begin{proof}
Let $X$ be the variety of type~\ref{dP-dP} and let $\hat X$ be its $\QQ$-factorialization.
Then $\hat X$ can be realized in $\FF[0^3, 1^2]$ as an intersection of two divisors $D_1,\, D_2\in |2M|$ 
according to~\ref{ex:dP-dP}. Assume that  $\hat X$ is smooth.
Then by Lemma~\ref{lemma:Eu} and Theorem~\ref{thm:DPal} the variety $\hat X$ not rational. 

By the description in Theorem~\ref{th:rho=2} any variety of type 
\ref{blowup-cubic-point}, 
\ref{g=6:nonG}, or~\ref{dP-blowup}
is birationally equivalent to a smooth cubic in $\PP^4$. It is well known 
that it is nonrational \cite{Clemens-Griffiths}.
The same holds for type~\ref{conic-conic} (see~\ref{ssect:conic-conic}).
Similarly, any variety of type~\ref{blowup-V14} or~\ref{conic-blowup} is birationally equivalent 
to a smooth Fano threefold $Y=Y_{14}\subset \PP^9$ of genus $8$ (with $\uprho(Y')=\iota(Y')=1$). Such a threefold is again birationally equivalent to 
a smooth cubic \cite[Theorem~4.5.8]{IP99} (see also \cite{Iskovskikh1980}, \cite{Tregub1985a}, \cite{Takeuchi-1989}) and hence it is nonrational \cite{Clemens-Griffiths}.

By the description in Theorem~\ref{th:rho=2} a
general variety of type~\ref{blowup-W2-point} or~\ref{g=6:trig} is birationally 
equivalent to a smooth double space branched in a quartic. It is also 
nonrational \cite{Voisin1988} (see 
\cite{Prz-Ch-Shr:DS} for special cases).

Any variety of type~\ref{g=5:trig},
\ref{g=5:conic-G}, or~\ref{dP-conic}
has a structure of conic bundle $f:\hat X\to \PP^2$ with 
discriminant curve $\Delta\subset \PP^2$ of degree $\ge 6$. 
If the $\QQ$-factorialization $\hat X$ is smooth, then $f$ is a standard 
conic bundle and $X$ is nonrational 
by Corollary~\ref{cb:P2-0}.
\end{proof}

\section{Proof of Theorem~\ref{th:rho=2}} 
\label{sect:pfTh12}

\subsection{Beginning of the proof}
The existence of a link \eqref{eq:diagram} follows from standard 
arguments (see, e.g., \cite[\S 4.1]{IP99}).
Indeed, since $\uprho(X)=1$ and $\rk\Cl(X)=2$, the variety $X$ has two 
small $\QQ$-factorializations 
$\tau:\hat X\to X$ and $\tau':\hat X'\to X$ (see \cite[Corollary 
4.5]{Kawamata:crep}) and they are connected by a flop $\chi$. The 
divisors $-K_{\hat X}=\tau^*(-K_X)$ and 
$-K_{\hat X'}=\tau^{\prime*}(-K_X)$ are nef and big. Hence there exist Mori 
contractions $f: \hat X\to Y$ 
and $f': \hat X'\to Y'$. 
They are described by \cite{Mori:3-folds}, \cite{Cutkosky:contr} (see 
\cite[Corollary~2.5]{P:ratF-1}).
Since the threefold $X$ is not rational, the contractions $f$ and $f'$ cannot 
be neither of types \type{C_2}, \type{D_2} nor \type{D_3}
(see e.g. Corollary~\ref{cb:P2} and Lemma~\ref{lemma:del-pezzo}).
Hence they are of types \type{E_1}-\type{E_5}, \type{C_1} 
or \type{D_1}.
Note that $X$ is rationally connected, so the varieties $Y$ and $Y'$ are. 
Hence $Y$ must be a smooth rational curve in the case where $f$ is of type \type{D_1}. If $f$ is a contraction of type \type{C_1}, then $Y$ is a smooth rational surface \cite{Cutkosky:contr}. Since $\uprho(Y)=1$, $Y\simeq \PP^2$
in this case. If $f$ is a contraction of types \type{E_1}-\type{E_5},
then $Y$ is a (possibly singular) Fano threefold with $\uprho(Y)=1$.

\subsection{Notation}
Denote $H:=-K_X$.
Then $\bil{H}{H}=(-K_X)^3=2g-2$, where $g=\g(X)$ is the genus of $X$. 
Let $H_Y$ and $H_Y'$ be the ample generators 
$\Pic(Y)$ and $\Pic(Y')$, respectively. If the contraction $f$ (resp., $f'$) is 
birational, then by 
$E$ (resp., by $E'$) we denote the exceptional divisor. Put 
\[
F:=f^*H_Y, \quad
F':=f'^*H_Y', \quad
M:= \tau_*F, \quad
M':=\tau'_*F', \quad
D:=\tau_*E, \quad
D':=\tau_*E'.
\]

For any threefold $V$ with terminal Gorenstein 
singularities one can define the following integral bilinear form on 
on $\Cl(X)$:
\[
\bil{D_1}{D_2}:=(-K_V)\cdot D_1\cdot D_2.
\]
The value of this form is preserved under flops, i.e. if 
$\chi: V\dashrightarrow V'$ is a flop, then 
\[
\bil{D_1}{D_2}=\bil{\chi_*D_1}{\chi_*D_2}.
\]

Consider the possibilities for the contraction $f$ case by case.

\subsection{Type \type{B_2}}
Then $f$ is the blowup of a smooth point.
Then $Y$ is a locally factorial Fano threefold with $\uprho(Y)=1$ and 
$(-K_X)^3=(-K_{\hat X})^3=(-K_Y)^3-8$.
Hence, $(-K_Y)^3\ge 16$.
According to Theorem~\ref{th:rho=1} we have 
$\iota(Y)=2$ and $\dd(Y)=2$ or 
$3$.
We obtain cases~\ref{blowup-cubic-point} and~\ref{blowup-W2-point}.

\subsection{Types \type{B_{3-4}}}
Then $f$ is the blowup of a singular (Gorenstein) point of type 
\type{cA_1}.
Then, as above, $Y$ is a locally factorial (singular) Fano threefold with $\uprho(Y)=1$ 
and 
\[
8\le (-K_X)^3=(-K_{\hat X})^3=(-K_Y)^3-2.
\]
According to Theorem~\ref{th:rho=1} for $Y$ there are only two possibilities: 
$\iota(Y)=\dd(Y)=2$ or $\iota(Y)=1$ and $\g(Y)=6$.
However, in the former case the anticanonical linear system $|-K_{\tilde X}|$ 
does not contract any curves (see 
\cite[Theorem~2]{P:factorial-Fano:e}), $\tilde X\simeq X$ and so $\uprho(X)=2$, a contradiction.
Hence, $\iota(Y)=1$ and $\g(Y)=6$. The corresponding link is described in \cite[Theorem~2]{P:factorial-Fano:e}. We obtain the case~\ref{g=5:conic-G}.

\subsection{Type \type{B_1}}
Then $f$ contracts a divisor $E\subset \hat X$ to a curve $\Gamma\subset Y$.
Thus $Y$ is a locally factorial Fano threefold with $\uprho(Y)=1$.
Moreover, 
\begin{equation}
\label{eq:KKB1}
\begin{array}{rcl}
-K_{X}^3 &=&(-K_Y)^3 - 2(-K_Y) \cdot \Gamma+2\p(\Gamma)-2,
\\[0.5em]
(-K_{\hat X})^2\cdot E &=& (-K_Y) \cdot \Gamma +2- 2\p(\Gamma)
\end{array}
\end{equation} 
(see e.g. \cite[Proposition~5.1]{P:planes}).

Consider the case $\iota(Y)\ge 2$. According to Theorem~\ref{th:rho=1} we 
have 
\[
\iota(Y)=2\quad \text{ and}\quad \dd(Y)\le 3.
\]
In this case the numbers $-K_Y \cdot \Gamma$ and $(-K_{\hat X})^2\cdot E$ are 
even (and positive).
Thus we can write
\[
-K_Y \cdot \Gamma=2m\quad \text{and}\quad (-K_{\hat X})^2\cdot E=2n. 
\]
for some positive integers $m$ and $n$. Then \eqref{eq:KKB1} can be rewritten as follows
\begin{eqnarray*}
g+2m&=&4\dd(Y) +\p(\Gamma),
\\
n+ \p(\Gamma) &=& m +1.
\end{eqnarray*}
Elimination $\p(\Gamma)$ from the first equation and $m$ from the second one 
we obtain
\begin{equation}
\label{eq:rho=2:iota=2:B1}
\begin{array}{rcl}
g+n+m&=&4\dd(Y) +1,
\\
g+2n+ \p(\Gamma)&=&4\dd(Y) +2.
\end{array}
\end{equation}
In particular, $\dd(Y)>1$ because $g+n+m\ge 7$. 

Assume that $\dd(Y)=2$. 
Then $m\le 3$ by \eqref{eq:rho=2:iota=2:B1}. If 
$m=1$, then $\Gamma$ is a line on $Y$ and so we obtain the case~\ref{g=6:trig}
(see \cite[Example~4.7]{P:ratF-1}).
If $m=3$, then $n=1$, $g=5$, $\p(\Gamma)=3$.
Recall that the linear system $|-\frac12K_Y|$ is base point free and defines a 
double cover $\pi: Y\to\PP^{3}$ \cite[Corollary 0.8]{Shin1989}.
In our case the restriction $\pi_{\Gamma}: \Gamma\to \pi(\Gamma)$ of $\pi$ to 
$\Gamma$ is birational onto its image, 
hence $\p(\pi(\Gamma))\ge \p(\Gamma)=3=\deg \pi(\Gamma)$. It is clear that this 
is impossible. 
Hence, $m=2$. Now we use the fact that the
linear system $|-K_Y|$ is very ample and defines an embedding $Y\subset\PP^{10}$ 
\cite[Corollary 0.8]{Shin1989}
so that the image of $\Gamma$ is a curve of degree $4$.
This implies that $\p(\Gamma)\le 1$ and so $g=5$, $n=2$, $\p(\Gamma)= 1$ by 
\eqref{eq:rho=2:iota=2:B1}.
In this case the morphism $\pi_{\Gamma}$ must be a double cover, $\pi(\Gamma)$ 
is a line in $\PP^3$
and $\Gamma$ is a intersection of 
two elements of the linear system $|-\frac 12 K_{Y}|$.
But then the divisor $-K_{\hat X}$ is ample and so $\uprho(X)=2$, a 
contradiction. 

Assume that $\dd(Y)=3$. 
Then $Y=Y_3\subset\PP^4$ is a smooth cubic and $\Gamma$ is a curve of degree 
$m$.
From \eqref{eq:rho=2:iota=2:B1} we have $m\le 7$.
If $\langle\Gamma\rangle\cap X=\Gamma$, then the linear system $|F-E|$ is base 
point free 
and so $-K_{\hat X}=2F-E$ is ample. This contradicts our assumption.
From now on we assume that $\langle\Gamma\rangle\cap X\neq \Gamma$.
If $\dim \langle\Gamma\rangle=2$, then $\Gamma$ must be a plane irreducible 
conic.
We obtain the case~\ref{dP-blowup}.
Assume that $\langle\Gamma\rangle=\PP^4$. Then $m\ge 4$.
Applying Castelnuovo's bound of the genus of curves in projective space 
\cite[Ch.~III, \S~2]{A-C-G-H}
we see that $\p(\Gamma)\le m-4$. Then \eqref{eq:rho=2:iota=2:B1} has no 
solutions.
Thus $\dim \langle\Gamma\rangle= 3$, i.e. $\Gamma$ is contained in a hyperplane.
Hence $|F-E|\neq \varnothing$ and 
\begin{equation}
\label{eq:rho=2:iota=2:B1-a}
0<(-K_{\hat Y})^2\cdot (F-E)=\frac12 (-K_{\hat Y})^2\cdot (-K_{\hat Y}-E)=g-1-n
\end{equation} 
(because $\tau$ does not contract any divisors).
Again apply Castelnuovo's bound \cite[Ch.~III, \S~2]{A-C-G-H}. 
For $m=5$, $6$ and $7$ we obtain that $\p(\Gamma)$ is less or equal to $2$, $4$ 
and $6$, respectively.
Then \eqref{eq:rho=2:iota=2:B1} has no solutions.
If $m=3$, then $\p(\Gamma)=0$ and we obtain the case~\ref{g=6:nonG}.
Finally, if $m=4$, then $\p(\Gamma)=1$, $n=4$, and $g=5$.
This contradicts \eqref{eq:rho=2:iota=2:B1-a}.

Now we assume that $\iota(Y)=1$. Put $-K_Y \cdot \Gamma=m$ and $(-K_{\hat 
X})^2\cdot E=n$. Then \eqref{eq:KKB1} can be rewritten as follows
\begin{equation}
\label{eq:rho=2:iota=2:B1-iota1}
\begin{array}{rcl}
g&=&\g(Y)-1 - m+\p(\Gamma),
\\
n &=& m +2- 2\p(\Gamma).
\end{array}
\end{equation} 
Since $\g(Y)\le 8$, $\g(Y)\neq 7$, and $g\ge 5$, this implies $m\le 5$ and 
$\g(Y)=g+ (n +m)/2>g$.
Then $-K_Y$ is very ample and the anticanonical image of $Y$ is an intersection 
of quadrics
(see e.g. \cite[Theorem~1.1]{P:ratF-1}).
Then $\p(\Gamma)\le 1$. 
Using this and \eqref{eq:rho=2:iota=2:B1-iota1} we deduce that $m\le 3$.
Then obviously, $\p(\Gamma)=0$ and again from \eqref{eq:rho=2:iota=2:B1-iota1} 
we get
two possibilities for $(g,\g(Y),n, m,\p(\Gamma))$:
$(5, 8, 4, 2, 0)$ and $(6, 8, 3, 1, 0)$.
These are the cases~\ref{blowup-V14} and~\ref{conic-blowup}.

\subsection{Type \type{B_5}}
\label{rho=2:cb-B5}
Then the contraction
$f: X\to Y$ is birational and contracts a divisor $E$ to a non-Gorenstein 
singular point $P\in Y$. 
In this case $Y$ is a Fano threefold with $\QQ$-factorial
terminal singularities and $\uprho(Y)=1$, $E\simeq \PP^2$ and 
$\OOO_E(E)\simeq\OOO_{\PP^2}(-2)$. 
Hence we have
(see, e.g., \cite[Lemma~4.1.6]{IP99}):
\begin{equation}
\bil{D}{D}=-2,\quad \bil{D}{H}=1. 
\end{equation}
By the above we may assume that the contraction $f'$ is not of types 
\type{B_1}-\type{B_4}.
Since 
\[
\Cl(X)=\ZZ\cdot H\oplus \ZZ\cdot D=\ZZ\cdot H\oplus \ZZ\cdot M', 
\]
we can write
$M'\sim a H- D$. Then
\[
\bil{M'}{M'}=a^2(2g-2)-2a-2\ge 4.
\]
On the other hand, if $f'$ is a del Pezzo fibration (resp. conic bundle), then 
$\bil{M'}{M'}=0$ (resp. $2$), a contradiction.
Finally, if $f'$ is of type \type{B_5}, then similarly $D'\sim a H- D$ and
\[
-2= \bil{D'}{D'}=a^2(2g-2)-2a-2\ge 4,
\]
a contradiction.

\subsection{}
Thus from now on we may assume that both contractions$f$ and $f'$ 
are not birational.
Below in subsections~\ref{rho=2:dP-dP} and~\ref{rho=2:dP-cb} we 
consider cases where $f$ is a del Pezzo fibration.
We have the following relations:
\begin{equation}
\label{rho=2:eq:dP}
\bil{M}{M}=0,\quad \bil{H}{M}=d,
\end{equation}
where $d$ is the degree of the generic fiber of $f$.
By Lemma~\ref{lemma:del-pezzo} $d\le 4$.

\subsection{Types \type{D_1}-\type{D_1}}
\label{rho=2:dP-dP}
Let both contractions $f$ and $f'$ be del Pezzo fibrations.
Put $d:=K_{F}^2$ and $d':=K_{F'}^2$.
Similarly to \eqref{rho=2:eq:dP} we have the following relations:
\begin{equation*}
\bil{M'}{M'}=0, \quad \bil{H}{M'}=d'. 
\end{equation*}
Since $\Cl(X)=\ZZ\cdot H\oplus \ZZ\cdot D=\ZZ\cdot H\oplus \ZZ\cdot D'$, we 
can write
$M'\sim a H- M$.
Hence, 
\[
(2g-2) a^2-2ad=0,\quad (2g-2) a-d=d'.
\]
Hence, $(g-1) a=d=d'$, $a=1$ and $g=5$.
We obtain the case~\ref{dP-dP}.

\subsection{Types \type{D_1}-\type{C_1}}
\label{rho=2:dP-cb}
Consider 
the case where $f$ is a del Pezzo fibration and $f'$ is a conic bundle. Put 
$d:=K_{F}^2$.
Then $Y'\simeq\PP^2$. Let $\Delta'\subset Y'$ be the discriminant curve and let 
$d':=\deg \Delta'$.
We have the following relations:
\begin{equation*}
\bil{M}{M}=0, \quad \bil{M'}{M'}=2, \quad \bil{H}{M}=d, \quad 
\bil{H}{M'}=12-d'. 
\end{equation*}
Since $\Cl(X)=\ZZ\cdot H\oplus \ZZ\cdot M=\ZZ\cdot H\oplus \ZZ\cdot M'$, we 
can write
$M'\sim a H- M$. Hence, 
\[
(2g-2) a^2-2ad=2,\quad (2g-2) a-d=12-d'.
\]
This gives us
\[
d=g-2,\quad 2g-2=12-d'+d=12-d'+g-2,\quad
g=12-d'.
\]
where $d'\ge 5$ by Corollary~\ref{cb:P2}.
Then $a=1$, $d=g-2$ and $d'=12-g$, where $g\ge 5$ and $d\le 4$. We obtain cases 
\ref{dP-conic} and~\ref{g=5:trig}.

\subsection{Types \type{C_1}-\type{C_1}}
\label{rho=2:cb-cb}
Finally, we consider the case where both contractions $f$ and $f'$ are
conic bundles.
Then $Y\simeq Y'\simeq\PP^2$. Let $\Delta\subset Y$ and $\Delta'\subset Y'$ be 
the corresponding discriminant curves, let $d:=\deg \Delta$ 
and let $d':=\deg \Delta'$.
We have the following relations (see, e.g., \cite[Lemma~4.1.6]{IP99}):
\begin{eqnarray*}
\bil{M}{M}&=&2,\quad \bil{H}{M}=12-d,
\\
\bil{M'}{M'}&=&2,\quad \bil{M'}{H}=12-d'. 
\end{eqnarray*}
Since $\Cl(X)=\ZZ\cdot H\oplus \ZZ\cdot M=\ZZ\cdot H\oplus \ZZ\cdot M'$, we 
can write
$M'\sim a H- M$. Hence, 
\[
(2g-2) a^2-2(12-d)a+2=2,\quad (2g-2) a-(12-d)=12-d'.
\]
By Corollary~\ref{cb:P2} we have $d, d'\ge 5$.
Taking into account that $g\in \{5,\dots, 10,12\}$ we 
immediately obtain $a=1$ and
$d=d'=13-g$, i.e. 
\begin{equation}
\label{eq:KMM}
-K_X=M+M'
\end{equation} 
and 
\[
\deg \Delta=\deg\Delta'=13-g,\qquad g\in \{8, 7, 6, 5\}. 
\]
Further, put $F^{\sharp}:=\chi^{-1}_* F'=\tau^{-1}_*M'$. 
From \eqref{eq:KMM}
we obtain $-K_{\hat X}=F+F^{\sharp}$ and 
\begin{multline*}
(F^{\sharp})^3= (-K_{\hat X} -F)^3= (-K_{\hat X})^3- 3(-K_{\hat X})^2\cdot F +3(-K_{\hat X})\cdot F^2 =
\\
= (-K_{X})^3- 3\bil{H}{M}+3\bil{M}{M}
=2g-2 -3(12-d)+6=7-g.
\end{multline*}
Take two general members $F_i'\in |F'|$ and let 
$F_i^{\sharp}:=\chi^{-1}_*(F_i')$. Then $l':=F_1'\cap F_2'$ is an irreducible
rational curve which is a fiber $f'$.
This curve does not meet the flopped locus of $\hat X'$ and, therefore, 
its proper transform 
$l^{\sharp}:=\chi^{-1}(l')$ is an irreducible rational curve which does not meet the 
flopped locus of $\hat X$. 
Note that the divisors $F_j^{\sharp}$ are $\tau$-negative, i.e. 
$F_j^{\sharp}\cdot l_i<0$ for all $i$.
Hence $F_1^{\sharp}\cap F_2^{\sharp}= l^{\sharp}+\sum a_i l_i$, where $l_i$ is the 
flopped curves and $a_i>0$.
The curves $l^{\sharp}$ swept out an open subset in $\hat X$. 
Further, $F^{\sharp}\cdot l^{\sharp}=0$. Hence,
\[
7-g=(F^{\sharp})^3= F^{\sharp}\cdot l^{\sharp}+\sum a_i F^{\sharp}\cdot l_i=\sum 
a_i F^{\sharp}\cdot l_i<0, \quad g>7.
\]
Thus $g=8$. 
Here the flopped locus is an irreducible curve $l$ such that
$F^{\sharp}\cdot l=-1$, $F\cdot l=1$ and $f(l)$ is a line on $Y\simeq \PP^2$.
We obtain the case~\ref{conic-conic}.

This completes the proof of Theorem~\ref{th:rho=2}.

\section{Proof of Theorem~\ref{th:rho=3}} 
\label{sect:pfTh14}
In this section we prove Theorem~\ref{th:rho=3}.
\begin{ass}
\label{ass:rho=3}
Let $X$ a Fano threefold with at worst terminal Gorenstein 
singularities with $\rk\Cl(X)\ge 3$ and $g:=\g(X)\ge 5$. 
Assume that $X$ is not rational and for any small $\QQ$-factorialization $\tau: 
\hat 
X\to X$, any birational contraction of $\hat X$ is small.
\end{ass}

\begin{lemma}
\label{claim1}
Let $\tau: \hat X\to X$ be a small $\QQ$-factorialization.
Then any extremal Mori contraction on $\hat X$ is a conic bundle 
over $\PP^1\times \PP^1$. In particular, $\uprho(\hat X)=3$.
Let $\Delta\subset \PP^1\times \PP^1$ be the discriminant curve. Then $\Delta$ is a divisor of type 
$(n_1,n_2)$, where $n_1,\, n_2\ge 4$.
\end{lemma}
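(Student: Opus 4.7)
My plan is to analyze an arbitrary $K_{\hat X}$-negative extremal Mori contraction $f:\hat X\to Y$ in view of Assumption~\ref{ass:rho=3} and the nonrationality of $X$.

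First, Assumption~\ref{ass:rho=3} rules out divisorial extremal contractions (these are birational without being small), so $f$ must be of fiber type. The structure theorem of Mori--Cutkosky for $\QQ$-factorial terminal Gorenstein threefolds (see \cite[Corollary~2.5]{P:ratF-1}) then yields that $Y$ is smooth and $f$ is either a del Pezzo fibration over $\PP^1$ or a conic bundle over a smooth rational surface. Extremality gives $\uprho(Y)=\uprho(\hat X)-1=\rk\Cl(X)-1\ge 2$, which excludes both the del Pezzo fibration case ($\uprho(\PP^1)=1$) and the conic bundle over $\PP^2$. Hence $f$ is a conic bundle over a smooth rational surface $Y$ with $\uprho(Y)\ge 2$.

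Second, I would show $Y\simeq\PP^1\times\PP^1$; this together with extremality forces $\uprho(\hat X)=3$. Suppose for contradiction that $Y$ is non-minimal, or $Y\simeq\FF_n$ with $n\ge 1$; then $Y$ contains a smooth rational curve $\Sigma$ with $\Sigma^2<0$. Setting $D:=f^{-1}(\Sigma)$, the plan is to produce an extremal divisorial Mori contraction of $\hat X$ whose exceptional locus is $D$, contradicting Assumption~\ref{ass:rho=3}. Concretely I would run the relative MMP of $\hat X$ over the surface $Y'$ obtained by contracting $\Sigma$ on $Y$ (singular when $n\ge 2$), and locate a divisorial step in this program; numerically, using the conic bundle relation for $-K_{\hat X}$ in terms of $-K_Y$ and $\Delta$, and the fact $\Sigma^2<0$, one checks that the class of a suitable section of $D\to\Sigma$ spans a $K_{\hat X}$-negative extremal ray of $\hat X$ whose contraction is divisorial.

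Third, once $Y=\PP^1\times\PP^1$ is established, the discriminant $\Delta\subset Y$ has some bidegree $(n_1,n_2)$, and for each projection $p_i:\PP^1\times\PP^1\to\PP^1$ a general fiber $F_i\cong\PP^1$ satisfies $\Delta\cdot F_i=n_i$. If some $n_i\le 3$, Proposition~\ref{prop:conic-bundle:rat} applied with $\phi=p_i$ would force $\hat X$, and hence $X$, to be rational, contradicting Assumption~\ref{ass:rho=3}. Therefore $n_1,n_2\ge 4$.

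The principal obstacle is the second step: exhibiting an \emph{extremal} divisorial Mori contraction from the negative curve $\Sigma\subset Y$, rather than merely producing a contractible divisor $D$. The difficulty is that $D$ carries fibers of $D\to\Sigma$ (living in the conic bundle ray of $f$) together with sections of $D\to\Sigma$ (which should span the sought new extremal ray), and one must verify that this new ray is genuinely extremal and $K$-negative. The cases of $\Sigma$ a $(-1)$-curve on non-minimal $Y$ and $\Sigma$ the exceptional section of $\FF_n$ with $n\ge 1$ may require distinct treatments of the target $Y'$ of the relative MMP.
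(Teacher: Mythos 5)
Your first and third steps coincide with the paper's argument: Assumption~\ref{ass:rho=3} rules out birational extremal contractions, $\uprho$ of the target being $\ge 2$ rules out del Pezzo fibrations and conic bundles over $\PP^2$ (smoothness of the base being Cutkosky's theorem), and the bound $n_1,n_2\ge 4$ follows from nonrationality via Proposition~\ref{prop:conic-bundle:rat} exactly as you say. The gap is your second step. The paper does not prove $Z\simeq\PP^1\times\PP^1$ here at all: it quotes \cite[Theorem~3.1]{P:ratF-1}, whereas you propose to rederive it and explicitly leave the derivation as a sketch ("the plan is to produce\dots", "one checks that\dots"). As written, this is the heart of the lemma and it is not established.

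Moreover, the sketch has a concrete defect beyond being unfinished. If you run a $K_{\hat X}$-MMP over the surface $Y'$ obtained by contracting a negative curve $\Sigma\subset Y$, the intermediate models of that MMP are in general \emph{not} small $\QQ$-factorializations of $X$ (they are no longer crepant over $X$ once a $K$-negative birational step has been performed), so a divisorial contraction found somewhere along that program does not contradict Assumption~\ref{ass:rho=3}, which only concerns small $\QQ$-factorializations of $X$. Also, your claim that a section of $D\to\Sigma$ spans a $K_{\hat X}$-negative extremal ray is exactly what may fail: $\hat X$ is only a weak Fano, and the second extremal ray of $\NE(\hat X/Y')$ (which is two-dimensional) may be $K$-trivial, i.e.\ a flopping ray, in which case no divisorial contraction exists on $\hat X$ itself. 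The way to repair the argument along your lines is to use that $\hat X$ (and all its flops) have terminal \emph{Gorenstein} $\QQ$-factorial singularities, so no $K$-negative small (flipping) contractions exist: hence the second ray of $\NE(\hat X/Y')$ is either divisorial and $K$-negative, contradicting Assumption~\ref{ass:rho=3} directly, or $K$-trivial, in which case one performs the flop — which stays inside the class of small $\QQ$-factorializations of $X$, where the Assumption still applies — and repeats; termination of flops then forces a $K$-negative ray over $Y'$ to appear, which must be divisorial (it cannot be of fiber type onto a curve over the surface $Y'$, and a fiber-type contraction onto a surface would just reproduce a conic bundle with a negative curve in the base), giving the contradiction. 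Either supply this argument in detail (treating $\Sigma$ a $(-1)$-curve and $\Sigma$ the negative section of $\FF_n$, and verifying irreducibility issues for $D=f^{-1}(\Sigma)$), or simply invoke \cite[Theorem~3.1]{P:ratF-1} as the paper does.
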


\begin{proof}
Let $\pi: \hat X\to Z$ be an extremal Mori contraction.
By our assumption it is not birational.
Since $\uprho(Z)=\uprho(\hat X)-1\ge 2$, $\pi$ is not a del Pezzo fibration.
Therefore, $\pi$ is a conic bundle and $Z$ is a smooth surface \cite{Cutkosky:contr}.
Moreover, $Z\simeq \PP^1\times\PP^1$ by \cite[Theorem~3.1]{P:ratF-1}.

Let $l_i$, $i=1,\, 2$ be general rulings of $Z=\PP^1\times\PP^1$. 
Put $n_i:=\Delta\cdot l_i$.
Let $\hat{S}_i:=\pi^{-1}(l_i)$. 
By Bertini's theorem $\hat{S}_i$ is a smooth surface and it 
does not contain $K_{\hat X}$-trivial curves. 
Hence, $-K_{\hat{S}_i}=-K_{\hat X}|_{\hat{S}_i}$ is ample, i.e. $\hat{S}_i$ is a del Pezzo surface.

On the other hand, $\pi_{\hat{S}_i}: \hat{S}_i\to l_i$ is a conic bundle whose degenerate fibers 
lie over the points $\Delta\cap l_i$. Hence $K_{\hat{S}_i}^2=8-n_i$ and so 
\[
8-n_i= K_{\hat{S}_i}^2= (K_{\hat X}+\hat{S}_i)^2\cdot \hat{S}_i = (-K_{\hat X})^2\cdot 
\hat{S}_i.
\]
Since $\hat X$ is not rational, $n_i\ge 4$ by 
Proposition~\ref{prop:conic-bundle:rat}.
\end{proof}

\begin{lemma}
\label{claim2}
We have $\iota(X)=1$ and 
the anticanonical divisor $-K_X$ is very ample.
\end{lemma}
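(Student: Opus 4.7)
The plan is to use the conic bundle $\pi : \hat X \to \PP^1 \times \PP^1$ furnished by Lemma~\ref{claim1} to translate the divisibility of $-K_X$ into divisibility on a general fiber surface, where primitivity is easy to check. Fix a small $\QQ$-factorialization $\tau : \hat X \to X$ as in Lemma~\ref{claim1}, so that $\uprho(\hat X)=3$ and the discriminant $\Delta\subset\PP^1\times\PP^1$ has bidegree $(n_1,n_2)$ with $n_i\ge 4$. Suppose toward contradiction that $\iota(X)\ge 2$, and write $-K_X=\iota(X)\,A$ with $A$ an ample Cartier divisor. Pulling back by the small morphism $\tau$ gives $-K_{\hat X}=\iota(X)\,\tau^*A$, so $-K_{\hat X}$ is divisible by $\iota(X)\ge 2$ in $\Pic(\hat X)$.

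Next I would fix a general ruling $l_1\subset\PP^1\times\PP^1$ of the first factor and form the divisor $\hat S_1:=\pi^{-1}(l_1)$. The arguments in the proof of Lemma~\ref{claim1} show that $\hat S_1$ is a smooth surface and that $\pi|_{\hat S_1}:\hat S_1\to l_1\simeq\PP^1$ is a conic bundle whose $n_1$ degenerate fibers lie over $\Delta\cap l_1$. Since $\hat S_1$ is a fiber of the composite projection $\hat X\to\PP^1\times\PP^1\to\PP^1$, its normal bundle is trivial, so adjunction gives $-K_{\hat S_1}=-K_{\hat X}|_{\hat S_1}$. Thus the class $-K_{\hat S_1}$ would be divisible by $\iota(X)\ge 2$ in $\Pic(\hat S_1)$.

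The final step is to check that this is impossible. Contracting one component in each of the $n_1$ degenerate fibers of $\pi|_{\hat S_1}$ exhibits $\hat S_1$ as a blowup $\beta:\hat S_1\to\FF_e$ of some Hirzebruch surface at $n_1\ge 4$ points; with exceptional divisors $E_1,\dots,E_{n_1}$, one has
\[
-K_{\hat S_1} \;=\; \beta^*(-K_{\FF_e}) - E_1 - \cdots - E_{n_1}.
\]
Since each $E_i$ appears with coefficient $1$ and is independent of $\beta^*\Pic(\FF_e)$, any class $H$ with $-K_{\hat S_1}=kH$ must have $E_i$-coefficient $-1/k\in\ZZ$, forcing $k=1$; hence $-K_{\hat S_1}$ is primitive in $\Pic(\hat S_1)$. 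This contradicts the preceding paragraph, so $\iota(X)=1$. A Fano threefold of index $1$ is in particular not a del Pezzo threefold of degree $1$, so Proposition~\ref{prop:v-ample} then yields that $-K_X$ is very ample.

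The main technical point is the primitivity of $-K_{\hat S_1}$, which reduces to the birational description of $\hat S_1$ as a Hirzebruch surface blown up at $n_1\ge 1$ points — the rest is bookkeeping, since the passage from divisibility of $-K_X$ on $X$ to divisibility of $-K_{\hat S_1}$ on a general surface fiber is immediate once the normal bundle of $\hat S_1$ in $\hat X$ is known to be trivial.
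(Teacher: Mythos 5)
Your proof is correct and follows essentially the same route as the paper: both deduce from Lemma~\ref{claim1} that the conic bundle has a nonempty discriminant, conclude that $-K_{\hat X}$ (hence $-K_X$) is primitive so $\iota(X)=1$, and then invoke Proposition~\ref{prop:v-ample}. The only difference is cosmetic: the paper gets primitivity directly (a component of a degenerate conic fiber has anticanonical degree $1$), whereas you verify it by restricting to the surface $\hat S_1$ and using its blowdown to a Hirzebruch surface, which is a correct but slightly longer check of the same fact.
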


\begin{proof}
By Lemma~\ref{claim1} we have $\Delta\neq \varnothing$. Hence $-K_{\hat X}$ is a primitive element of $\Pic(\hat X)$, so $\iota(X)=1$.
So, the assertion follows from Proposition~\ref{prop:v-ample}.
\end{proof}

\begin{slemma}
\label{claim1a}
In the notation of Lemma~\xref{claim1} one has $n_1,\, n_2\le 5$
\end{slemma}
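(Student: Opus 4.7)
The plan is to bound $n_i$ by analyzing the image under $\tau$ of a general surface fiber of the composition of $\pi$ with the $i$-th projection $\PP^1\times\PP^1\to\PP^1$. For a general ruling $\ell_i$ of $\PP^1\times\PP^1$, set $\hat S_i := \pi^{-1}(\ell_i)$. From the proof of Lemma~\ref{claim1}, $\hat S_i$ is a smooth del Pezzo surface of degree $d_i := 8 - n_i$, so in particular $d_i \ge 1$ and thus $n_i \le 7$ automatically.

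First I would establish two numerical facts. Since $\tau: \hat X\to X$ is small, $\dim\Exc(\tau)\le 1$, so the divisor $\hat S_i$ is not contracted and $\tau|_{\hat S_i}: \hat S_i \to \tau(\hat S_i)\subset X\subset \PP^{g+1}$ is birational onto its image. Since $\hat S_i\cdot\hat S_i = 0$ (as $\hat S_i$ is numerically a $\pi$-fiber class), adjunction gives $-K_{\hat X}|_{\hat S_i} = -K_{\hat S_i}$, and hence, by the projection formula,
\[
\deg\tau(\hat S_i) \;=\; (-K_X)^2\cdot \tau_*\hat S_i \;=\; (-K_{\hat X})^2\cdot \hat S_i \;=\; \bigl(-K_{\hat S_i}\bigr)^2 \;=\; d_i.
\]

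Next, I would bound the linear span of $\tau(\hat S_i)$ in $\PP^{g+1}$. By Lemma~\ref{claim2}, $-K_X$ is very ample, and $\tau$ composed with the embedding $X\hookrightarrow\PP^{g+1}$ is the morphism defined by the complete linear system $|-K_{\hat X}|$ (using $\tau_*\OOO_{\hat X}=\OOO_X$). Restricting to $\hat S_i$, the map $\tau|_{\hat S_i}$ is therefore given by a linear subsystem of $|-K_{\hat S_i}|$, so its image spans a projective subspace of dimension at most $\dim|-K_{\hat S_i}| = d_i$. Thus $\dim\langle\tau(\hat S_i)\rangle\le d_i = 8-n_i$.

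The conclusion will then follow from a dimension-versus-degree comparison. Since $\tau(\hat S_i)$ is an irreducible surface, its linear span has dimension at least $2$, immediately giving $n_i\le 6$. To exclude $n_i = 6$, note that in that case $d_i = 2$ and the span is at most $\PP^2$, forcing $\tau(\hat S_i)=\PP^2$; but $\PP^2\subset\PP^{g+1}$ has degree $1$, contradicting the computed degree $d_i = 2$. Hence $n_i\le 5$. I do not anticipate any real obstacle; the only point to verify is that $\tau|_{\hat S_i}$ is given by a subsystem of $|-K_{\hat S_i}|$ rather than something more restrictive, and this is transparent from $\tau$ being the anticanonical morphism of $\hat X$ to $X$.
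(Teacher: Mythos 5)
Your proof is correct, and it rests on the same two inputs as the paper's own argument: the very ampleness of $-K_X$ from Lemma~\ref{claim2}, and the identification of $-K_{\hat X}|_{\hat S_i}$ with $-K_{\hat S_i}$ on the general fiber surface $\hat S_i=\pi^{-1}(\ell_i)$ (for the latter, note that numerical triviality of $\hat S_i|_{\hat S_i}$ is not quite the right justification for a linear equivalence, but since $\hat S_i$ is an honest fiber of the composed map to $\PP^1$, the bundle $\OOO_{\hat S_i}(\hat S_i)$ is trivial and the identification holds). The difference is only in the final step: the paper notes that $\hat S_i$ contains no $K_{\hat X}$-trivial curves, so it embeds into $X\subset\PP^{g+1}$ and $-K_{\hat S_i}$ is very ample, whence $K_{\hat S_i}^2=8-n_i\ge 3$ by the standard fact that del Pezzo surfaces of degree $1$ or $2$ do not have very ample anticanonical class; you replace that citation by an elementary degree-versus-linear-span comparison ($\deg\tau(\hat S_i)=d_i$ while $\dim\langle\tau(\hat S_i)\rangle\le d_i$), which rules out $d_i\le 2$ directly. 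Your route is slightly more self-contained and needs only that $\tau|_{\hat S_i}$ is birational onto its image (automatic since $\tau$ is small), rather than an embedding; the paper's is shorter at the cost of invoking the classification fact. Both are sound.
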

\begin{proof}
The divisor $-K_{X}$ is very ample by Lemma~\ref{claim2}, hence $-K_{\hat{S}_i}$ is very ample as well.
Therefore, $8-n_i=K_{\hat{S}_i}^2\ge 3$. 
\end{proof}

\begin{slemma}
\label{claim-Eff}
The cone of effective divisors $\Eff(X)\subset \mathrm{N}^1(X)=\RR^3$ 
polyhedral and generated by the classes of movable divisors $S_1,\dots, S_r$, $r\ge 3$ such that $\dim |S_i|=1$. For each $S_i$ there exists a small 
$\QQ$-factorialization $\tau: \hat X\to X$ such that the proper transform 
$|\hat S_i|$ of $|S_i|$ is base point free and defines a \textup(non-minimal\textup) del Pezzo fibration $\varphi_i: \hat X\to \PP^1$. Each two-dimensional face $\mathrm{F}\subset \Eff(X)$ is
generated, up to permutations of $S_1,\dots, S_r$, by the classes of two divisors $S_i$ and $S_{i+1}$ \textup(the subscript indices of $S_i$ are considered modulo $r$\textup).
\end{slemma}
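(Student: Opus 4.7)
The plan is to combine the Mori-dream-space structure of any small $\QQ$-factorialization of $X$ with the description of Mori fibre space contractions from Lemma~\ref{claim1}. First I would observe that any small $\QQ$-factorialization $\tau:\hat X\to X$ is a $\QQ$-factorial threefold with terminal singularities and with $\tau^*(-K_X)=-K_{\hat X}$ big and nef; by the minimal model program this makes $\hat X$ a Mori dream space. Consequently, $\Eff(X)$ and $\overline{\operatorname{Mov}}(X)$ are rational polyhedral cones in $\mathrm{N}^1(X)\otimes\RR=\RR^3$, and $\overline{\operatorname{Mov}}(X)$ decomposes as the union of the nef cones $\Nef(\hat X_j)$ over the finitely many small $\QQ$-factorializations $\hat X_j$ of $X$.

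Next I would verify that $\Eff(X)=\overline{\operatorname{Mov}}(X)$ under Assumption~\ref{ass:rho=3}: if a prime divisor $D$ on some $\hat X_j$ had non-movable class, then running the $D$-MMP (available on a Mori dream space) would, after finitely many flops, produce a divisorial extremal contraction extracting the fixed part of $D$, contradicting the hypothesis that no small $\QQ$-factorialization of $X$ admits a birational Mori contraction. So every effective class is movable.

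To identify the extremal rays, I would invoke Lemma~\ref{claim1}: every extremal Mori contraction on any $\hat X_j$ is a conic bundle $\pi_j:\hat X_j\to\PP^1\times\PP^1$, contributing a two-dimensional external facet $\pi_j^*\Nef(\PP^1\times\PP^1)$ of $\Nef(\hat X_j)$ spanned by the pullbacks of the two ruling classes. Each such pullback is the class of a base-point-free pencil of del Pezzo surfaces $\hat S_i$ as in Lemma~\ref{claim1}, which defines a del Pezzo fibration $\varphi_i:\hat X_j\to\PP^1$; the projection formula, together with connectedness of fibres, gives $\dim|\hat S_i|=1$. Given an extremal ray $R$ of $\overline{\operatorname{Mov}}(X)$, I would pick any chamber $\Nef(\hat X_j)$ with $R$ on its boundary; then $R$ is the intersection of two facets of $\Nef(\hat X_j)$, and whenever one of them is an internal flopping wall I would cross it into an adjacent chamber while keeping $R$ on the boundary. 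Since there are only finitely many chambers, this procedure terminates in a chamber where both facets containing $R$ are external conic bundle facets, whence $R=\RR_{\ge 0}[\hat S_i]$ for some del Pezzo fibration $\varphi_i$.

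The cyclic combinatorics and the bound $r\ge 3$ then follow from $\Eff(X)$ being a three-dimensional rational polyhedral cone: its link is a convex polygon with vertices $[S_1],\ldots,[S_r]$ and edges generated by consecutive pairs $[S_i],[S_{i+1}]$ (indices modulo $r$), and these edges are precisely the conic bundle facets, each spanned by the two del Pezzo fibration classes arising from the two projections $\PP^1\times\PP^1\to\PP^1$. The main obstacle is the combined verification that $\Eff(X)=\overline{\operatorname{Mov}}(X)$ and that every external facet of every chamber is a conic bundle facet; both rest on carrying out MMP across all small modifications and invoking Assumption~\ref{ass:rho=3} at each step to rule out divisorial contractions.
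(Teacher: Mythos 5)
Your argument is correct and follows essentially the same route as the paper: the no-divisorial-contraction hypothesis yields $\Eff(X)=\bigcup_{\hat X}\Nef(\hat X)$ over the finitely many small $\QQ$-factorializations, Lemma~\ref{claim1} identifies every boundary contraction as a conic bundle over $\PP^1\times\PP^1$ whose ruling pullbacks are the del Pezzo pencils $S_i$ with $\dim|S_i|=1$, and the combinatorics of a three-dimensional polyhedral cone gives the statement on two-dimensional faces. The only cosmetic difference is that you locate each extremal ray on a conic-bundle facet by crossing flopping walls, whereas the paper uses Kodaira's lemma (no big classes on the boundary of $\Eff(X)$) together with factoring the semiample contraction of a nef boundary class through a Mori contraction.
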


\begin{proof}
For any small $\QQ$-factorialization $\pi: \hat X\to X$ we have a natural identification $\Eff(X)=\Eff(\hat X)$ as well as an inclusion 
$\Nef(\hat X)\subset \Eff(\hat X)$. Moreover, since there are no divisorial contractions on $\hat X$, for any element $D\in \Eff(\hat X)$ there is a finite sequence of flops $\chi : \hat X\dashrightarrow \hat X'$ over $X$ such that $\chi_*D$ is nef on $\hat X'$. Therefore, there exists a chamber decomposition 
\[
\Eff(X)=\bigcup_{\hat X} \Nef(\hat X),
\]
where the union is finite and runs through all small $\QQ$-factorializations $\pi: \hat X\to X$.
Note that $\hat X$ is a $\QQ$-factorial FT variety (i.e. it is a klt log Fano threefold with respect to some boundary). By the cone theorem 
the Mori cone $\NE(\hat X)$ is generated by a finite 
number of extremal rays. This implies that 
the same is true for its dual cone $\Nef(\hat X)$ 
and the extremal rays of 
$\Nef(\hat X)$ are generated by a finite number of integral divisors.
Therefore, the cone $\Eff(X)$ is generated by integral divisors
$S_1, S_2,\dots$. 
By Kodaira's lemma \cite[Lemma~0-3-3]{KMM} the boundary of $\Eff(X)$ contains no big elements. 
Thus for every $S_i$ there is a $\QQ$-factorialization $\pi_i: \hat X_i\to X$ such that the proper transform $\hat S_i$ is nef.
Since $\hat X_i$ is a weak Fano threefold, the linear system $|\hat S_i|$ 
defines a contraction $\varphi_i: \hat X_i\to Y_i$ 
to a lower dimensional variety.
This contraction passes through a Mori contraction:
\[
\varphi_i: \hat X_i \overset{\psi_i}\longrightarrow Z_i \longrightarrow Y_i.
\]
By Lemma~\ref{claim1} the contraction $\psi_i$ is a conic bundle and $Z_i\simeq \PP^1\times \PP^1$.
Hence $Y_i\simeq \PP^1$ and $\hat S_i$ is a del Pezzo surface.
Then the degree of $S_i$ is bounded:
\[
(-K_X)^2\cdot S_i=(-K_{\hat X_i})^2\cdot \hat S_i\le 9.
\]
This implies that modulo algebraic equivalence there is only a finite number of 
divisors $S_i$.
\end{proof}

Thus, the transversal section of the effective cone $\Eff(X)$ has the following form:
\[
\begin{tikzpicture}[scale=1]
\path[draw, black, thick,fill=gray!15] 
(-3,0) node[black, left]{$S_r$} --
(-2,2) node[black, above]{$S_1$} -- 
(2,2) node[black, above]{$S_2$} -- 
(3,0) node[black, right]{$S_3$} ;
\path[draw, black, thick,fill=gray!15,dotted] 
(-3,0) --(-2,-1)--(2,-1) -- (3,0)
;
\end{tikzpicture} 
\]

\begin{lemma}
\label{claim4}
For all $i$ we have $K_{S_i}^2=4$ and one of the following holds:
\begin{enumerate}
\item \label{claim4-caseg=5}
$g=5$,\, $r=4$,\, $S_{1}+S_{3}\sim S_{2}+S_{4}\sim -K_{X}$;
\item \label{claim4-caseg=7}
$g=7$,\, $r=3$,\, $S_{1}+S_{2}+S_{3}\sim -K_{X}$.
\end{enumerate} 
\end{lemma}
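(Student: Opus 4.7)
The plan is to derive numerical constraints from the conic bundle structures at each two-dimensional face of $\Eff(X)$, bound $r$ via Hodge index, then use flop-invariance of the Euler characteristic to force the degrees $d_i := K_{\hat S_i}^2$ to be essentially constant, and finally solve the resulting linear system.

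I first assemble the intersection data. For each adjacent pair $(S_i,S_{i+1})$, the proof of Lemma~\xref{claim1} provides a small $\QQ$-factorialization $\hat X$ on which the corresponding Mori contraction is a conic bundle $\psi:\hat X\to\PP^1\times\PP^1$ with $\hat S_i,\hat S_{i+1}$ the pullbacks of the two rulings; this gives $\bil{S_i}{S_{i+1}}=-K_{\hat X}\cdot F=2$ for $F$ a conic fiber, and $\bil{S_i}{S_i}=0$ since $\hat S_i$ is the fiber class of the del Pezzo fibration $\varphi_i$. Setting $H=-K_X$, one has $d_i:=\bil{H}{S_i}=K_{\hat S_i}^2\in\{3,4\}$ by Lemmas~\xref{claim1} and~\xref{claim1a}, so $n_i:=8-d_i\in\{4,5\}$. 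To bound $r$ I invoke Hodge index: $\bil{\cdot}{\cdot}$ has signature $(1,2)$ on $\mathrm{N}^1(X)_{\RR}\cong\RR^3$, so in a Lorentzian frame with $H=(\sqrt{2g-2},0,0)$ the null vectors take the form $S_i=r_i(1,\cos\theta_i,\sin\theta_i)$ with $r_i=d_i/\sqrt{2g-2}$, and the condition $\bil{S_i}{S_{i+1}}=2$ becomes $\sin^2(\alpha_i/2)=(2g-2)/(d_i d_{i+1})$ for the cyclic gaps $\alpha_i$. Since $(2g-2)/(d_i d_{i+1})\ge 1/2$, each $\alpha_i\ge\pi/2$, and $\sum_i\alpha_i=2\pi$ forces $r\le 4$.

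For the key rigidity step I exploit that flops are derived equivalences and thus preserve $\Eu(\hat X)$. Beauville's formula for a standard conic bundle over $\PP^1\times\PP^1$ with smooth discriminant of bidegree $(n_i,n_{i+1})$ gives $\Eu(\hat X)=8+2(n_i+n_{i+1})-2n_i n_{i+1}$. Equating these values across all cyclic faces yields $(n_i-n_{i+2})(1-n_{i+1})=0$; since $n_{i+1}\ne 1$, we obtain $n_i=n_{i+2}$ for all $i$, so for $r=3$ all $d_i$ are equal, while for $r=4$ only $d_1=d_3$ and $d_2=d_4$ are forced.

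Writing $H=\sum c_i S_i$ and using $\bil{H}{S_j}=d_j$: for $r=3$ with $d_i=d$ one finds $c_i=d/4$ and $H^3=3d^2/4$; integrality of $H^3=2g-2\ge 8$ excludes $d=3$ (which gives $27/4$), leaving $d=4$, $g=7$, and $-K_X\sim S_1+S_2+S_3$. For $r=4$ the rank-three constraint on the singular $4\times 4$ Gram matrix produces a relation $\alpha(S_1+S_3)=\beta(S_2+S_4)$ with $\alpha,\beta>0$; enumerating the four $(d_1,d_2)\in\{3,4\}^2$ allowed by the previous step shows only $d_1=d_2=4$ yields $H^3=8$, the remaining cases giving $H^3\in\{9/2,6,6\}$ excluded by integrality or by the hypothesis $g\ge 5$. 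Thus $g=5$, $\alpha=\beta$, and $-K_X\equiv S_1+S_3\equiv S_2+S_4$ numerically; promotion to linear equivalence is standard on a Fano threefold with torsion-free $\Pic$. The subtlest point will be justifying the Euler-characteristic formula when $\hat X$ is merely terminal Gorenstein, which is handled by passing to a resolution of $\hat X$ and observing that the correction terms enter symmetrically across the cyclic indices and therefore do not affect the consistency argument.
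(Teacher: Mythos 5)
Your setup (the intersection numbers $\bil{S_i}{S_i}=0$, $\bil{S_i}{S_{i+1}}=2$, $d_i:=\bil{-K_X}{S_i}\in\{3,4\}$) and the Lorentzian angle bound $r\le 4$ are fine in spirit, but the load-bearing step of your argument — the ``rigidity'' claim $n_i=n_{i+2}$ obtained by equating $\Eu(\hat X)=8+2(n_i+n_{i+1})-2n_in_{i+1}$ across the faces — has a genuine gap. That formula is valid only for a \emph{standard} conic bundle: smooth total space and smooth discriminant curve. Under Assumption~\xref{ass:rho=3} the models $\hat X_i$ are merely terminal Gorenstein (in the actual examples of Proposition~\xref{ex:r=3} they are nodal), the conic bundles $\psi_i'$, $\psi_i''$ need not be standard, and the discriminant curves of bidegree $(n_i,n_{i+1})$ may well be singular; moreover ``flops are derived equivalences, hence preserve $\Eu$'' is not available for singular threefolds (and even granting a derived equivalence, it does not obviously control the topological Euler characteristic there). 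Your closing remark that the correction terms ``enter symmetrically across the cyclic indices'' is unsubstantiated: the singular fibers of $\psi_i'$ and the singularities of its discriminant have no reason to match those of the conic bundles attached to the other faces, so the face-by-face equality of Euler characteristics cannot be asserted.

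This is not a cosmetic issue, because without that step your final integrality screen does not close. For $r=3$ with $(d_1,d_2,d_3)=(4,3,3)$, writing $-K_X\equiv c_1S_1+c_2S_2+c_3S_3$ and pairing with the $S_j$ gives $c_1=\tfrac12$, $c_2=c_3=1$, hence $(-K_X)^3=\bil{-K_X}{-K_X}=8$, i.e.\ $g=5$ — an even value $\ge 8$ that passes both of your tests, whereas the lemma asserts $K_{S_i}^2=4$ for all $i$ and $g=7$ when $r=3$. (For $r=4$ the mixed-degree cases do happen to die by integrality and $g\ge5$, but the $r=3$ case does not.) The paper avoids all of this by a purely integral computation in $\Cl(X)$: since $\Cl(X)$ is generated by $-K_X$, $S_i$, $S_{i+1}$, one writes $S_{i+2}=a_i(-K_X)+b_iS_i+c_iS_{i+1}$ with $a_i,b_i,c_i\in\ZZ$, gets $b_i=\pm1$ and $a_i>0$, and pairing with $S_{i+1}$ yields $2=a_id_{i+1}+2b_i$, which forces $d_{i+1}=4$, $a_i=1$, $b_i=-1$; the dichotomy $c_i=0$ versus $c_i=-1$ then gives $(r,g)=(4,5)$ or $(3,7)$ and bounds $r$ at the same time, so neither the angle argument nor any Euler-characteristic input is needed. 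Two smaller points you should also attend to: the signature-$(1,2)$ statement for $\bil{\cdot}{\cdot}$ on $\Cl(X)_\RR$ is used repeatedly (parametrization of the $S_i$ as null vectors, $\bil{S_1}{S_3}\neq0$, and the promotion of $-K_X\equiv S_1+S_3$ to linear equivalence) and needs a proof in this singular setting — e.g.\ restrict to a general member of $|{-}mK_X|$ avoiding $\Sing(X)$ and use the surface Hodge index theorem together with torsion-freeness of $\Cl(X)$ (note the $S_i$ are Weil, not Cartier, so ``torsion-free $\Pic$'' is not the relevant group).
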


\begin{proof}
Since the group $\Cl(X)$ generated by the classes of divisors $-K_{X}$, $S_i$ and 
$S_{i+1}$, we can write 
\begin{equation}
\label{eq:rho=3:KF}
S_{i+2}=a_{i}(-K_{X})+b_{i}S_{i}+ c_iS_{i+1},\qquad a_i,\, b_i,\, c_i\in \ZZ. 
\end{equation} 
Similarly, the class of $S_i$ can be expressed in terms of $-K_{X}$, $S_{i+1}$ and $S_{i+2}$. Hence, 
$b_i=\pm 1$. Since $S_{i+2}$ is effective and its class is not contained in 
the face generated by $S_i$ and $S_{i+1}$, we have $a_i>0$.
Further, 
\[
-K_X\cdot S_{i+2}\cdot S_{i+1}=2= 4a_i+2b_i,\qquad b_i=1-2a_i.
\]
Hence, $b_i=-1$, $a_i=1$, i.e. 
\begin{equation}
\label{eq:rho=3:KF-a}
S_{i+2}=-K_{X}-S_{i}+ c_iS_{i+1},\qquad c_i\in \ZZ. 
\end{equation} 
Similarly, we have
\begin{equation}
\label{eq:rho=3:KF-b}
S_{i+3}=-K_{X}-S_{i+1}+ c_{i+1}S_{i+2}. 
\end{equation}
Combining this with \eqref{eq:rho=3:KF-a} we obtain
\begin{equation}
\label{eq:rho=3:KF-c}
S_{i+3}=(1+c_{i+1})(-K_{X})-c_{i+1}S_{i}+ (c_ic_{i+1}-1)S_{i+1}. 
\end{equation}
Since $S_{i+3}$ effective, $c_{i+1}\ge -1$ for all $i$.
Denote 
\[
d_i:= K_{S_i}^2= (-K_{X})^2\cdot S_i.
\]
From \eqref{eq:rho=3:KF-a} we have
\begin{equation}
\label{eq:rho=3:KF-d}
d_{i+2}+d_i=2g-2+ c_id_{i+1}. 
\end{equation} 
Since $g\ge 5$ and $3\le d_i\le 4$ for all $i$, we have $c_i\le 0$.
Moreover, $c_i=0$ for some $i$ if and only if $g=5$, and in this case $c_i=0$
and $d_i=4$ for all $i$. 
Then \eqref{eq:rho=3:KF-a} has the form
\[
-K_{X}= S_{i+2}+S_{i}= S_{i+3}+S_{i+1}= S_{i+4}+S_{i+2}.
\]
In particular, $S_{i+4}=S_{i}$, i.e. $r=4$.
We obtain the case~\ref{claim4-caseg=5}.

Assume that $c_i=-1$ for all $i$. Then $g>5$. From \eqref{eq:rho=3:KF-c} we obtain $S_{i+3}=S_i$, i.e. $r=3$. 
This is the case~\ref{claim4-caseg=7}.
\end{proof}

\begin{lemma}
\label{claim5}
The case \xref{claim4}\xref{claim4-caseg=7} does not occur.
\end{lemma}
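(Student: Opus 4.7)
The plan is to show that case~\xref{claim4-caseg=7} forces $X$ to carry the structure of a finite double cover of $(\PP^1)^3$, which contradicts $\uprho(X)=1$ via the injectivity of pullback on $\mathrm{N}^1$ under finite morphisms. The starting point is the triple intersection $S_1\cdot S_2\cdot S_3$. From the proof of Lemma~\xref{claim4} we already know $d_i=4$ and $\bil{S_i}{S_j}=(-K_X)\cdot S_i\cdot S_j=2$ for $i\ne j$. On a small $\QQ$-factorialization $\hat X_i\to X$ where $|\hat S_i|$ is base-point-free, two distinct members of the pencil are disjoint fibers of the del Pezzo fibration $\varphi_i$, so $\hat S_i^2\cdot\hat S_j=0$; since flops preserve triple intersection numbers, this vanishing holds on every small $\QQ$-factorialization. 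Substituting $S_3\sim -K_X-S_1-S_2$ yields
\[
S_1\cdot S_2\cdot S_3 \;=\; S_1\cdot S_2\cdot(-K_X) \;-\; S_1^2\cdot S_2 \;-\; S_1\cdot S_2^2 \;=\; 2.
\]

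Next, the three pencils $|S_1|,|S_2|,|S_3|$ jointly define a rational map $\Phi\colon X\dashrightarrow(\PP^1)^3$ of generic degree $S_1\cdot S_2\cdot S_3=2$. I would then argue that on a suitable small $\QQ$-factorialization $\hat X\to X$ all three pencils $|\hat S_i|$ are simultaneously base-point-free, so that $\Phi$ lifts to a morphism $\hat\Phi\colon\hat X\to(\PP^1)^3$ with $\hat\Phi^*\OOO(1,1,1)=\hat S_1+\hat S_2+\hat S_3=-K_{\hat X}$. Since each $\hat S_i$ is nef on $\hat X$, a curve $C\subset\hat X$ is contracted by $\hat\Phi$ iff $\hat S_i\cdot C=0$ for every $i$, iff $-K_{\hat X}\cdot C=0$; these are precisely the flopping curves contracted by $\tau\colon\hat X\to X$. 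Thus $\hat\Phi$ factors through $\tau$ and produces a finite surjective morphism $\bar\Phi\colon X\to(\PP^1)^3$ of degree $2$. For any finite surjective morphism of normal projective threefolds, the projection formula gives $\deg(\bar\Phi)\cdot D\cdot D_1\cdot D_2=\bar\Phi^*D\cdot\bar\Phi^*D_1\cdot\bar\Phi^*D_2$, so $\bar\Phi^*D=0$ forces $D\cdot D_1\cdot D_2=0$ for all $D_1,D_2$ and hence $D=0$ in $\mathrm{N}^1((\PP^1)^3)_{\QQ}$ by non-degeneracy of the intersection form on the smooth variety $(\PP^1)^3$. Consequently $\uprho(X)\ge\uprho((\PP^1)^3)=3$, contradicting $\uprho(X)=1$.

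The hard part will be the simultaneous base-point-freeness asserted in the second paragraph. Using Lemma~\xref{claim1}, every Mori extremal contraction on any small $\QQ$-factorialization $\hat X$ is a conic bundle over $\PP^1\times\PP^1$, and the two rulings of such a base supply (after composition with the conic bundle) two of the three pencils $|\hat S_i|$. Since $\Eff(X)$ is the simplicial cone $\RR_{\ge 0}\langle S_1,S_2,S_3\rangle$ with three two-dimensional faces, one should match each face with one of three conic bundles and verify that they are all supported on a common model $\hat X$, i.e.\ $\Nef(\hat X)=\Eff(X)$. Once this chamber identification is in place, all three pencils are base-point-free on $\hat X$ and the argument above concludes the proof.
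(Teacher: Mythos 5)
There is a genuine gap, and it sits exactly at the step you defer to the end. Under the hypotheses of the theorem one has $\uprho(X)=1<3=\rk\Cl(X)$, so $X$ is not $\QQ$-factorial and every small $\QQ$-factorialization $\tau\colon\hat X\to X$ has flopping curves. If $C$ is such a curve, it is $K$-trivial, hence it is \emph{not} contracted by the conic bundle $\psi\colon\hat X\to\PP^1\times\PP^1$ of Lemma~\xref{claim1} (a fiber-type Mori contraction only contracts $K$-negative curves); therefore $\psi_*C\neq 0$ and $(\hat S_1+\hat S_2)\cdot C>0$, so $\hat S_3\cdot C=(-K_{\hat X}-\hat S_1-\hat S_2)\cdot C<0$ on the model where $\hat S_1,\hat S_2$ are the pullbacks of the rulings. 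The same argument on any model shows that no small $\QQ$-factorialization can have all three $\hat S_i$ nef: one would get $\hat S_i\cdot C=0$ for every flopping curve, forcing each $S_i$ to be $\QQ$-Cartier on $X$, hence numerically proportional to $-K_X$, contradicting $\bil{S_i}{S_i}=0$. So the assertion $\Nef(\hat X)=\Eff(X)$, i.e.\ simultaneous base-point-freeness of the three pencils on a common model, is not merely ``the hard part'' --- it is false under the standing assumptions, and no matching of faces with conic bundles can verify it. (It \emph{does} hold if one drops $\uprho(X)=1$: the double cover of $\PP^1\times\PP^1\times\PP^1$ branched in $|{-K}|$ realizes case \xref{claim4}\xref{claim4-caseg=7} with $\uprho=3$, which is precisely the point of the remark following the lemma; your construction reproduces that example rather than excluding the case.)

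A secondary error: flops do \emph{not} preserve triple intersection numbers of arbitrary divisors; only the pairing $\bil{D_1}{D_2}=(-K)\cdot D_1\cdot D_2$ is flop-invariant (for a simple flop the product $D_1\cdot D_2\cdot D_3$ changes by $-\sum_C(D_1\cdot C)(D_2\cdot C)(D_3\cdot C)$). Hence $\hat S_i^2\cdot\hat S_j=0$, and with it your degree computation $S_1\cdot S_2\cdot S_3=2$, is only valid on a model where the relevant pencils are actually free, not on every small $\QQ$-factorialization (and on $X$ itself the $S_i$ are not $\QQ$-Cartier, so the triple product is not even defined there). The paper's proof runs in the opposite direction to yours: fixing the model on which $\hat S_1,\hat S_2$ are free, it uses $\bil{S_i}{S_j}=2$ ($i\neq j$) and $-K_{\hat X}=\hat S_1+\hat S_2+\hat S_3$ to compute $\hat S_3^2\cdot\hat S_i=0$, concludes that $|\hat S_3|$ would have to be base point free, and contradicts this with the negativity of $\hat S_3$ on a flopping curve --- i.e.\ it proves the impossibility of exactly the configuration your argument needs to construct.
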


\begin{proof}
Assume that $g=7$. Let $\hat S_i$ be the proper transform $S_i$ on 
$\hat X$. Then $-K_{\hat X}=\hat S_1+\hat S_2+\hat S_3$. 
Since $\bil{S_i}{S_j}=2\updelta_{i,j}$, we have
\[
\begin{array}{rcccllcl}
(-K_{\hat X}) \cdot \hat S_1\cdot \hat S_2 &=&\bil {S_1}{S_2}= &2&=& \hat S_3 \cdot 
\hat S_1\cdot \hat S_2,
\\
(-K_{\hat X}) \cdot \hat S_i \cdot \hat S_3 &=&\bil {S_i}{S_3}=&2&=& \hat S_3 
\cdot \hat S_1\cdot \hat S_2 +\hat S_3^2 \cdot \hat S_i&=& 2 +\hat S_3^2 \cdot 
\hat S_i.
\end{array}
\]
Hence, $\hat S_3^2 \cdot \hat S_i=0$. This shows that the pencil
$|\hat S_3|$ is base point free. 
On the other hand, $\hat S_3$ must be negative on some flipping curve, a 
contradiction.
\end{proof}

\begin{remark}
The variety of type \xref{claim4}\xref{claim4-caseg=7} occurs if we relax the condition $\uprho(X)=1$: `a smooth double cover of 
$\PP^1\times \PP^1\times \PP^1$ whose branch divisor is a member of $|-K_{\PP^1\times \PP^1\times \PP^1}|$ is a nonrational Fano threefold 
with $\uprho(X)=3$ and $\g(X)=7$. 
\end{remark}

Let us summarize the facts obtained above: 

\begin{proposition}
\label{ex:r=3}
Let $X$ be a Fano threefold satisfying Assumption~\xref{ass:rho=3}.
Then the following assertions hold.
\begin{enumerate}
\item 
\label{ex:r=3a}
$\g(X)=5$, $\rk \Cl (X)=3$, the effective cone $\Eff(X)$ is polyhedral, generated by effective prime divisors $S_1,\dots, S_4$ such that 
\[
-K_X\sim S_1+S_3\sim S_2+S_4, (-K_X)^2\cdot S_i=4 
\]
and . The transversal section of $\Eff(X)$ has the following form:
\[
\begin{tikzpicture}[scale=1]
\path[draw, black, thick,fill=gray!15] 
(-1,0) node[black, thick,left]{$S_4$} --
(-1,2) node[black,thick, left]{$S_1$} -- 
(1,2) node[black,thick, right]{$S_2$} -- 
(1,0) node[black, thick,right]{$S_3$}-- 
(-1,0);
\end{tikzpicture} 
\]
\item 
\label{ex:r=3b}
For every $S_i$ there is a $\QQ$-factorialization $\pi_i: \hat X_i\to X$ such that the proper transform $\hat S_i$ is defines a \textup(non-minimal\textup) del Pezzo fibration 
$\varphi_i: \hat X_i\to \PP^1$ which passes through two conic bundles $\psi_i'$ and $\psi_i''$ with discriminant curves of bidegree $(4,4)$:
\[
\xymatrix@R=0.8em{
&\PP^1\times \PP^1\ar[rd]&
\\
\hat X_i\ar[ru]^{\psi_i'}\ar[rd]_{\psi_i''}\ar[rr]^{\varphi_i} && \PP^1
\\
&\PP^1\times \PP^1\ar[ru]&
}
\]
\item 
\label{ex:r=3c}
The anticanonical image $X=X_8\subset \PP^6$ 
is a complete intersection of three quadrics so that two of them have corank 
$3$. The singular locus of a general variety from this family consists of $8$ nodes.
Conversely, a general complete intersection of three quadrics $Q_1\cap Q_2\cap 
Q_3\subset \PP^6$, 
where $\corank(Q_1)=\corank(Q_2)=3$, is a threefold satisfying Assumption~\xref{ass:rho=3}.
\end{enumerate}
\end{proposition}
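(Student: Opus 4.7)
Parts (i) and (ii) are essentially a synthesis of the preceding lemmas. First I would combine Lemma \ref{claim1} (every Mori contraction on any small $\QQ$-factorialization $\hat X$ is a conic bundle over $\PP^1\times\PP^1$, so $\uprho(\hat X)=\rk\Cl(X)=3$) with Lemma \ref{claim-Eff} (the cone $\Eff(X)$ is polyhedral, generated by movable divisors $S_1,\dots,S_r$, and for each $S_i$ there is a $\QQ$-factorialization $\hat X_i$ on which $\hat S_i$ is nef and defines a del Pezzo fibration $\varphi_i:\hat X_i\to \PP^1$). Lemma \ref{claim4} then pins down the combinatorics, forcing $g=5$ and $r=4$ together with $S_1+S_3\sim S_2+S_4\sim -K_X$ and $(-K_X)^2\cdot S_i=4$, since the alternative $g=7$, $r=3$ was excluded by Lemma \ref{claim5}. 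This yields (i). For (ii), on $\hat X_i$ the two Mori contractions bordering the face spanned by $[\hat S_i]$ in $\Nef(\hat X_i)$ are conic bundles $\psi'_i,\, \psi''_i:\hat X_i\to \PP^1\times\PP^1$ by Lemma \ref{claim1}, and by construction each factors through $\varphi_i$ via one of the two projections $\PP^1\times\PP^1\to \PP^1$. A general fiber of $\varphi_i$ is the preimage of a ruling $l\subset\PP^1\times\PP^1$, and the formula $K_{\hat S_i}^2 = 8-\Delta\cdot l$ from the proof of Lemma \ref{claim1} together with $K_{\hat S_i}^2=4$ forces the discriminant $\Delta$ to have bidegree $(4,4)$.

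For (iii), $-K_X$ is very ample and embeds $X$ as $X_8\subset\PP^6$ by Lemma \ref{claim2}. Since the excluded classes \ref{cl-0} and \ref{cl-A} in Proposition \ref{prop:v-ample} occur only with $\rk\Cl(X)=2$, our $X$ lies in class \ref{cl-B} and is an intersection of quadrics. The key step is to produce two quadrics of corank $3$ in the net cutting out $X$. For the pair $(S_1, S_3)$ with $S_1+S_3\sim -K_X$, a general fiber $F_1\in|S_1|$ is a quartic del Pezzo surface whose image in $\PP^6$ spans a $\PP^4$; the argument of Proposition \ref{claim:case1-0} then shows that exactly one quadric $Q_1$ in the net contains $\langle F_1\rangle=\PP^4$, so $\corank(Q_1)\ge 3$, with equality because $X$ has only isolated singularities. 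The two pencils $\{F_1\}$ and $\{F_3\}$ are identified with the two rulings of $\PP^4$-subspaces on $Q_1$. Symmetrically, $(S_2, S_4)$ yields a second corank-$3$ quadric $Q_2$, and $Q_3$ is the residual quadric. For general such $X$ one computes $\Sing(X)=(\Pi_1\cap Q_2\cap Q_3)\sqcup(\Pi_2\cap Q_1\cap Q_3)$, where $\Pi_i=\Sing(Q_i)$ is a plane, giving $4+4=8$ nodes.

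The converse direction proceeds via the small-resolution argument of Proposition \ref{claim:case1-0} applied twice: starting from a general $X=Q_1\cap Q_2\cap Q_3$ with $\corank(Q_1)=\corank(Q_2)=3$, the four pencils of quartic del Pezzo surfaces cut out on $X$ by the four rulings of $\PP^4$-subspaces in $Q_1\cup Q_2$ yield, after suitable blowups, the four del Pezzo fibrations and the associated conic bundle structures. The main obstacle lies in the assembly for (iii): one must verify carefully that the four generators $S_i$ correspond precisely to these four rulings, that the base of each conic bundle is actually $\PP^1\times\PP^1$ (rather than a blowup), and that for a general such $X$ there are no extra Weil classes, so that $\rk\Cl(X)=3$ and Assumption \ref{ass:rho=3} is truly satisfied. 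The first two items reduce to intersection-number bookkeeping on $\hat X_i$, but ruling out additional divisorial classes on a generic $X$ is the least routine point and requires a Lefschetz/specialization-type argument for the singular threefold $X$.
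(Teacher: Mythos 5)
Your proposal follows essentially the same route as the paper: parts \ref{ex:r=3a} and \ref{ex:r=3b} are assembled from Lemmas~\ref{claim1}--\ref{claim5} exactly as in the text, and part \ref{ex:r=3c} reproduces the argument of Proposition~\ref{claim:case1-0} (a general member of each pencil $|S_i|$ is a quartic del Pezzo surface spanning a $\PP^4$, forcing two corank-$3$ quadrics in the net, with the converse obtained by the same small-resolution construction). Your closing caveat about verifying $\rk\Cl(X)=3$ for a general complete intersection in the converse is a fair observation, but the paper handles that point at the same level of brevity (``proved similarly to Proposition~\ref{claim:case1-0}''), so there is no substantive divergence.
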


\begin{proof}
The assertions of~\ref{ex:r=3a} and~\ref{ex:r=3b} are consequences of
Lemmas~\ref{claim1}--\ref{claim5}. Let us prove~\ref{ex:r=3c}.
Similarly to Proposition~\ref{claim:case1-0}
the threefold $X=X_8\subset \PP^6$ is a complete intersection of three quadrics:
$X=Q_1\cap Q_2\cap Q_3\subset \PP^6$ and for a general member $\hat S_i\in |\hat S_i|$ the image $S_i=\pi_i(\hat S_i)\subset X\subset \PP^6$ is a smooth quartic del Pezzo surface. There is a quadric in the net containing the linear span $\langle S_i\rangle$. We may assume that $Q_1\supset \langle S_i\rangle$ and $\corank(Q_1)=3$ because the singularities of $X$ are isolated.
Then two pencils of four-dimensional 
linear subspaces 
$\Lambda_t$ and $\Lambda_t'$ on $Q_1$ give us two pencils $|S_i|=\Lambda_t\cap Q_2\cap Q_3$ and $|S_{i+2}|=\Lambda_t'\cap Q_2\cap Q_3$ of quartic del 
Pezzo surfaces. Since $X$ contains two more pencils of quartic del 
Pezzo surfaces, there is another quadric, say $Q_2$ in the net such that $\corank(Q_2)=3$.
The last statement of~\ref{ex:r=3c} is proved similarly to the proof of 
Proposition~\ref{claim:case1-0}.
\end{proof}

\newcommand{\etalchar}[1]{$^{#1}$}
\def\cprime{$'$}


\end{document}